\theoremstyle{nonumberplain}  
\newtheorem{proof}{Proof} 
\theoremstyle{plain}  
\newtheorem{proposition}{Proposition}[section]  
\newtheorem{corollary}[proposition]{Corollary}  
\newtheorem{lemma}[proposition]{Lemma}  
\newtheorem{theorem}[proposition]{Theorem}
\newtheorem{remark}[proposition]{Remark}
\newtheorem{remarks}[proposition]{Remarks}
\newtheorem{example}[proposition]{Example}  
\newtheorem{examples}[proposition]{Examples}
\newtheorem{definition}[proposition]{Definition} 
\newtheorem{notation}[proposition]{Notation} 
\newcommand{\cl}{\mathrm{cl}}
\newcommand{\bbR}{\mathbb{R}}
\newcommand{\Det}{\mathrm{Det}}
\newcommand{\Hom}{\mathrm{Hom}}
\newcommand{\bbC}{\mathbb{C}}
\newcommand{\bbZ}{\mathbb{Z}}
\newcommand{\Res}{R}
\DeclareMathOperator{\CAR}{CAR}
\DeclareMathOperator{\Cl}{Cl}
\DeclareMathOperator{\Id}{Id}
\DeclareMathOperator{\image}{im}
\DeclareMathOperator{\Top}{top}
\begin{document}
\title{The Chiral Anomaly of the Free Fermion in Functorial Field Theory}
\author{Matthias Ludewig\footnote{Universit\"at Greifswald, E-Mail: matthias.ludewig@uni-greifswald.de} ~and Saskia Roos}

\maketitle

\begin{abstract}
When trying to cast the free fermion in the framework of functorial field theory, its {\em chiral anomaly} manifests in the fact that it assigns the determinant of the Dirac operator to a top-dimensional closed spin manifold, which is not a number as expected, but an element of a complex line. In functorial field theory language, this means that the theory is {\em twisted}, which gives rise to an {\em anomaly theory}. In this paper, we give a detailed construction of this anomaly theory, as a functor that sends manifolds to infinite-dimensional Clifford algebras and bordisms to bimodules. 
\end{abstract}

\tableofcontents

\section{Introduction}

Functorial field theory is a mathematical formalism (first introduced by Atiyah and Segal \cite{AtiyahTFT, Segal}) which is designed to capture aspects of physical quantum field theory. Concisely, a functorial field theory is a  monoidal functor from a suitable bordism category to a linear category, e.g.\ the category of vector spaces. Spelled out, such a field theory of dimension $d$ assigns to a $(d-1)$-dimensional manifold $Y$ a vector space $F(Y)$ (the ``space of states''), and to a bordism $X$ between manifolds $Y_0$, $Y_1$ a linear map $F(X): F(Y_1) \rightarrow F(Y_0)$ (which describes the time evolution of these states). {\em Functoriality} then means that these maps behave as expected under gluing of bordisms. {\em Monoidality} of the functor means that $F$ sends disjoint unions to tensor products. The latter implies in particular that $F$ assigns to a closed manifold $X$ (seen as a bordism from the empty set to itself) a number, hence $F$ determines a function on the moduli space of closed $d$-manifolds, the {\em partition function}, and it is a general principle that this function determines key aspects of the theory.

In physics, a field theory is often described in terms of an {\em action functional} $S$ on a {\em space of fields} $\mathscr{F}(X)$, and it is a somewhat fundamental question if and how such a field theory (in the physicist's sense) can be described mathematically by a functorial field theory; more concretely, given for each manifold $X$ a space of fields $\mathscr{F}(X)$ together with an action functional $S$ on this space (both of these should be ``local'' and ``physical'' in a suitable sense), one asks for a canonical way to construct a symmetric monoidal functor $F$ as described above from these data. At least formally, the partition function of $F$ is easily described: A physicist's paradigm states that the partition function of this functorial field theory should be given by the {\em path integral} \cite[ \S 1.2]{MuellerSzabo},
\begin{equation} \label{PathIntegral}
  F(X) = \int_{\mathscr{F}(X)} e^{-S(\varphi)} \mathcal{D} \varphi.
\end{equation}
This already illustrates that the task of constructing $F$ is far from trivial: The spaces of fields $\mathscr{F}(X)$ are typically infinite-dimensional, and integration over them is not defined.

\medskip

Work-arounds are possible, however, in the case where $\mathscr{F}(X)$ is a linear space and the action functional $S$ is quadratic. Maybe the simplest, already non-trivial, example of this type is the {\em free scalar field}, where $\mathscr{F}(X) = C^\infty(X)$ and the action functional is 
\begin{equation*}
  S(\varphi) = \int_X \bigl(|d\varphi|^2 + m^2 \varphi^2\bigr) = \int_X \varphi\cdot (\Delta + m^2)\varphi, \qquad m >0,
\end{equation*}
where for the second equality to hold in general, we need $X$ to be closed.
In this case, the path integral \eqref{PathIntegral} has the form of a Gaussian integral, so we just stipulate
\begin{equation*}
  F(X) := \det\nolimits_\zeta(\Delta + m^2)^{-1/2},
\end{equation*}
by analogy with the finite-dimensional case; here the right hand side is the determinant of the unbounded operator $\Delta + m^2$, defined using zeta-regularization. The task of defining a functorial field theory with this partition function is an intriguing story; it has been completed by Kandel \cite{Kandel}. Let us remark however,  that while this field theory is very interesting analytically, it has no topological content, since it can be continuously deformed to the trivial field theory.

Things are different for the {\em free fermionic field}. Here $X$ is an even-dimensional spin manifold and $\mathscr{F}(X) = C^\infty(X, \Sigma_X)$ is the space of smooth spinors of $X$. The action functional is
\begin{equation*}
  S(\Phi) = \int_X \langle \Phi, D_X\Phi\rangle,
\end{equation*}
where $D_X$ is the Dirac operator. While this action functional is still quadratic, the corresponding operator $D_X$ is no longer positive, which entails that the determinant $\det\nolimits_\zeta(D_X)^{-1/2}$ is not a number in a natural way, but instead an element of the (conjugate of the) {\em determinant line} 
\begin{equation*}
\mathrm{Det}_X  = \overline{\Lambda^{\mathrm{top}}\mathcal{H}_X^+} \otimes \Lambda^{\mathrm{top}} \mathcal{H}_X^-,
\end{equation*}
where $\mathcal{H}_X^{\pm}$ are the harmonic spinors on $X$ (of positive/negative chirality). From a supergeometry point of view, this is due to the fact that the Berezin integral does not canonically produce a number without further choices.
This is one manifestation of the {\em chiral anomaly} of the free fermion, and it means that the corresponding functorial field theory must be {\em twisted}.

\medskip

Roughly, a $d$-dimensional twisted field theory is a theory that, instead of a vector space, assigns an $A_Y$-module to a $(d-1)$-dimensional manifold $Y$, where the twisting manifests in the fact that the algebra $A_Y$ may depend on the manifold $Y$ as well. 
To explain this rigorously, we have to start with the notion of a {\em twist}. For the free fermion in $d$ dimensions, this is a functor
\begin{equation*}
  T: \mathrm{Bord}_{\langle d-1, d\rangle}^{\mathrm{Spin}} \longrightarrow \mathrm{sAlg}
\end{equation*}
from the $d$-dimensional spin bordism category to the bicategory $\mathrm{sAlg}$ of $\bbZ_2$-graded algebras, $\bbZ_2$-graded bimodules and grading-preserving intertwiners. In particular, $T$ assigns an algebra $T(Y)$ to a $(d-1)$-dimensional spin manifold $Y$ and a $T(Y_0)$-$T(Y_1)$-bimodule $T(X)$ to a $d$-dimensional spin bordism $X$ between $Y_1$ and $Y_0$.
%We remark that since the target category is not a category but a bicategory, we do not require $T$ to be a strict functor; instead, we require coherent isomorphisms $T(X_0) \otimes_{T(Z)} T(X_1) \cong T(X)$ when a bordism $X$ between $Y_0$ and $Y_1$ is cut into two pieces $X_0$, $X_1$ along a hypersurface $Z$. 
A $d$-dimensional $T$-twisted field theory is then a natural transformation
\begin{equation*}
\begin{tikzcd}[column sep = {1.7cm,between origins},row sep={0.5cm,between origins}]
 & \,\ar[dd, Rightarrow, "F"] & \\
\mathrm{Bord}_{\langle d-1, d\rangle}^{\mathrm{Spin}}~~ \ar[rr, bend left=30, "\mathbf{1}"] \ar[rr, bend right=30, "T"'] & & \mathrm{sAlg}, \\
 & \, &
\end{tikzcd}
\end{equation*}
where $\mathbf{1}$ is the trivial twist that assigns $\bbC$ (considered as an algebra, respectively a $\bbC$-$\bbC$-bimodule) to all manifolds and bordisms. In the case that $T$ is also the trivial twist, this definition reduces to the definition of an untwisted field theory discussed above. In general however, for a $(d-1)$-dimensional spin manifold $Y$, $F(Y)$ is not a vector space but a $T(Y)$-module. In the case where $X$ is a closed $d$-manifold, $T(X)$ will be a $\bbC$-$\bbC$-bimodule, i.e.\ a vector space, and instead of a number, $F(X)$ will be an element of this vector space. 

\medskip

This paper is dedicated to the construction of the twist $T$ of the free fermion. Since the free fermion will assign the determinant of the Dirac operator to a closed $d$-dimensional spin manifold $X$, which is an element of the conjugate of the determinant line, our discussion above shows that we must have 
\begin{equation*}
T(X) = \overline{\mathrm{Det}}_X
\end{equation*}
 for closed manifolds $X$. In total, $T$ will be defined as follows. First, to a $(d-1)$-dimensional spin manifold $Y$, $T$ assigns the Clifford algebra $\Cl(W_Y)$ on the space $W_Y$ of smooth spinors on $Y$, which is isomorphic to a suitable algebra of canonical anti-commutation relations (a CAR algebra). To obtain a $\Cl(W_{Y_0})$-$\Cl(W_{Y_1})$-bimodule for a spin bordism $X$ between $Y_1$ and $Y_0$, one notices that the space $L_X$ of boundary values of harmonic spinors on $X$ is a {\em Lagrangian} in the direct sum of $W_{Y_0}$ and $W_{Y_1}$, so that the exterior algebra $\Lambda L_X$ is such a bimodule in a natural way (Thm.~\ref{TheoremLagrangian}). 

The main issue is now functoriality, i.e.\ the behavior of the bimodules $T(X)$ under gluing of bordisms. Here we define isomorphisms
\begin{equation*}
 \tau: T(X) \longrightarrow T(X_0) \otimes_{T(Z)} T(X_1)
\end{equation*}
when $X$ is obtained by gluing two bordisms $X_0$, $X_1$ along a common boundary $Z$. Since the target $\mathrm{sAlg}$ of $T$ is a bicategory, we cannot expect to have equality here; instead, the twist $T$ includes these isomorphisms $\tau$ as additional data. Moreover, we show that these isomorphisms are  {\em coherent} in the sense that whenever a bordism is decomposed into three pieces, the two different ways of composing the corresponding gluing isomorphisms coincide. This is proved in Thm.~\ref{ThmCoherence}.

\medskip

One of the main observations of this paper is that the chiral anomaly appears due to a {\em purely algebraic} reason, which is that ``second quantization'' of fermions is functorial only up to a certain error. The Gluing Theorem~\ref{ThmGluing}, an abstract result on composition of Clifford modules coming from Lagrangian subspaces, provides a clear understanding of this phenomenon. 
Together with a corresponding coherence result, Thm.~\ref{ThmNumber}, it is the backbone of our construction of the functor $T$. 
The first of these results is an extension of the Gluing Lemma~2.2.8 of Stolz and Teichner \cite{StolzTeichnerElliptic}; the latter seems to be entirely new. 

To begin with, we notice that it is not even clear that the composition of two Lagrangians is again a Lagrangian; our Thm.~\ref{ThmComposition} gives sufficient conditions for this to be the case, which leads to a good category of Lagrangian relations in the infinite-dimensional setup, c.f.\ Remark~\ref{RemarkCategory}. Conveniently, these sufficient conditions turn out to also be necessary in order to have the Gluing Theorem~\ref{ThmGluing}.

We remark that our constructions are purely algebraic: Our spaces $W$ will be complex pre-Hilbert spaces with a real structure, which are not required to be complete with respect to the Hermitian form; the Clifford algebra is a quotient of the algebraic tensor algebra of $W$, and all tensor products are algebraic.

\medskip

We expect that the Gluing Theorem~\ref{ThmGluing} can be extended to a functional analytic setup, where the Clifford algebras are realized as von Neumann algebras, the bimodules are Hilbert space bimodules over these and the algebraic tensor product is replaced by Connes' fusion product. This is a question of ongoing research.

The anomaly theory $T$ constructed in this paper is part of a much larger story: Conjecturally, it can be extended above and below, to an {\em extended} functorial field theory in a higher categorical framework. Its next higher level, for example, is the theory constructed by Dai-Freed in \cite{FreedDai}. It is a fascinating observation, sketched in \S\ref{SectionOutlook} below, that all of the index theory of the Dirac operator is comprised in the anomaly theory of the free fermion.

\medskip

This paper is structured as follows. In \S\ref{Section Clifford Algebras}, we discuss the algebraic preliminaries,  and prove the abstract results on the composition of bimodules over Clifford algebras needed later. In \S\ref{SectionSpinGeometry}, we first introduce the geometric setup we will be working in, followed by a discussion of the analysis of boundary value problems of the Dirac operator needed in this paper. The construction of the functor $T$ is then carried out in \S\ref{SectionTwist}. Finally, in \S\ref{SectionOutlook}, we give an outlook on further aspects of the story that are not discussed in detail in this paper.

\medskip

The paper \cite{MickelssonScott}, where Mickelsson and Scott construct a version of the free fermion, contains many similar ideas compared to this paper. However, they attempt to construct it as an untwisted usual functorial field theory, an endeavor which only yields a {\em projective} functor owing to the presence of the anomaly.

The article \cite{MuellerSzabo} also considers the anomaly of the free fermion in functorial field theory setting, but a different part compared to our paper: In the language of \S\ref{SectionFreedDaiTheory}-\S\ref{SectionSecondExtension}, their theory is essentially the $\langle d, d+1, d+2\rangle$-dimensional part of the free fermion, while we focus on the $\langle d-1, d\rangle$-dimensional part.

There is a rich literature on the free fermion two-dimensional, conformal case, starting with Segal \cite{Segal}. In particular, the papers \cite{Kriz} and \cite{Tener} have some similarities to ours, but focus on modularity formulas, respectively traciality properties. In this paper, we focus on the general picture, ignoring special features of the two-dimensional case.

\paragraph{Acknowledgements.} It is our pleasure to thank  C.\ B\"ar, P.\ Kristel, A.\ Hermann, E.\ Rabinovich, A.\ Stoffel, S.\ Stolz, P.\ Teichner and K.\ Waldorf for helpful discussions. 
We are further indebted to the Max-Planck-Institute in Bonn, where part of this research was conducted, as well as the University of Potsdam and the University of Adelaide. The first-named author was supported by the Max-Planck-Foundation and the ARC Discovery Project grant FL170100020 under Chief Investigator and Australian Laureate Fellow Mathai Varghese. The second named author held a Hausdorff Scholarship provided by the Hausdorff Center of Mathematics in Bonn and was also supported by the Special Priority Programme SPP2026, ``Geometry at Infinity'' of the DFG.

\paragraph{Acknowledgment for v4.}

I thank Raphael Schmidpeter for pointing out that the published proof of Thm.~\ref{ThmNumber} was incorrect, and for helping to obtain a correct version, which is now contained in arXiv version 4.

\section{Clifford Algebras and their Modules}\label{Section Clifford Algebras}

This section contains the algebraic part of the paper. We start by discussing Clifford algebras in the setting of complex vector spaces with a real structure, and afterwards Lagrangians in these spaces. Finally, these two notions are brought together when we explain how Lagrangians give rise to bimodules over Clifford algebras. Throughout, we take care to not require completeness of our vector spaces, since subsequently, we aim to apply the results of this section to the space of smooth spinors on a compact manifold, endowed with the $L^2$ inner product.

\subsection{Real Structures and Clifford Algebras} \label{SectionRealStructures}

Let $W$ be a (possibly infinite-dimensional and not necessarily complete) Hermitian vector space, and let $\overline{W}$ be its complex conjugate, which has the same underlying vector space, but the complex structure replaced by its negative. The identity map $W \rightarrow \overline{W}$ is an $\bbR$-linear, $\bbC$-{\em antilinear} vector space isomorphism between the two. 
We will denote the element of $\overline{W}$ corresponding to $v \in W$ by ${v}^*$, and similarly, if $\xi \in \overline{W}$, we denote by ${\xi}^*$ the same element, considered as an element of $W$. 
The vector space $\overline{W}$ carries an induced Hermitian form, given by
\begin{equation*}
  \langle \xi_1, \xi_2 \rangle_{\overline{W}} \coloneqq \langle {\xi}_2^*, {\xi}_1^* \rangle_W = \overline{\langle {\xi}_1^*, {\xi}_2^* \rangle}_W, \ \ \ \ \  \xi_1, \xi_2 \in \overline{W}.
\end{equation*}
We remark that our convention is that Hermitian forms are $\bbC$-antilinear in the {\em first} entry.

\begin{definition}[Real structure]\label{DefinitionRealStructure}
 A {\em real structure} on a Hermitian vector space $W$ is an anti-unitary involution 
 \begin{equation*}
   W \longrightarrow W, \qquad w \longmapsto \overline{w}.
 \end{equation*}
The {\em opposite} $-W$ of a space $W$ with a real structure consists of the same underlying Hermitian vector space together with the real structure $w \mapsto - \overline{w}$. If $f: V \rightarrow W$ is a map between real vector spaces, then its {\em conjugate} $\overline{f}:V \rightarrow W$ is defined by
\begin{equation*}
  \overline{f}(v) = \overline{f(\overline{v})}.
\end{equation*}
\end{definition}

\begin{notation}[The bilinear form] \label{NotationBracket}
On a Hermitian vector space $W$, a real structure induces a complex-bilinear form $b$, defined by
\begin{equation}\label{Eq.Bracket}
  b(v, w) \coloneqq \langle \overline{v}, w \rangle, \ \ \ \ \ v, w \in W.
\end{equation}
\end{notation}

This allows us to form the corresponding Clifford algebra.

\begin{definition}[Clifford algebra] 
Given a Hermitian vector space $W$ with a real structure and associated bilinear form $b(\cdot, \cdot)$, we denote by $\Cl(W)$ the Clifford algebra associated to the bilinear form defined above; in other words, the algebra generated by the elements of $W$, with the relation
  \begin{equation} \label{CliffordRelationBracket}
     v \cdot w + w \cdot v = b( v, w)
  \end{equation}
for $v, w \in W$. Declaring elements of $W\subset \Cl(W)$ to be odd induces a $\bbZ_2$-grading on the Clifford algebra.
\end{definition}

Recall that the {\em opposite} of an algebra $A$ is the algebra $A^{\mathrm{op}}$, which has the same underlying vector space, but the multiplication reversed. If $A$ is a superalgebra, the opposite is defined with an additional sign; more precisely, $A^{\mathrm{op}}$ is the superalgebra that has the same underlying vector space, but multiplication  $\bullet$ defined by
\begin{equation} \label{SignInOppMult}
  a \bullet b = (-1)^{|a||b|} b a
\end{equation}
for homogeneous elements $a, b \in A^{\mathrm{op}}$. When applying this to our Clifford algebras, we have canonically
\begin{equation} \label{OppositeAlgebraEq}
 \Cl(W)^{\mathrm{op}}  \cong \Cl(-W),
\end{equation}
in other words, the opposite of the Clifford algebra associated to a Hermitian space $W$ with a real structure  is the Clifford algebra associated to the opposite  $-W$, see Def.~\ref{DefinitionRealStructure}.

\begin{remark}[The CAR algebra] \label{CARalgebra}
Given a Hermitian vector space $V$, the space $W \coloneqq V \oplus \overline{V}$ carries a canonical real structure, given by the {\em flip map}
\begin{equation}\label{Eq.Flipmap}
  \overline{(v, \xi)} \coloneqq ({\xi}^*, {v}^*), \ \ \ \  v \in V, \ \xi \in \overline{V}.
\end{equation}
The associated bilinear form then takes the form
\begin{equation*}
   b\bigl( (v, \xi), (w, \eta)\bigr) = \bigl\langle \overline{(v, \xi)}, (w, \eta)\bigr\rangle_W = \langle  {\xi}^*, w\rangle_V + \langle {\eta}^*, v \rangle_V,
\end{equation*}
for $v, w \in V$, $\xi, \eta \in \overline{V}$. In this case, the Clifford algebra $\Cl(W)$  is canonically isomorphic to the algebra $\CAR(V)$, which is the algebra on the symbols $a(v)$, $a^*(v)$, for $v \in V$, subject to the {\em canonical anticommutation relations}
  \begin{equation*}
  \begin{aligned}
    a(v)a(w) + a(w) a(v) &= 0,\\
     a^*(v)a^*(w) + a^*(w) a^*(v) &= 0, \\
    a^*(v) a(w) + a^*(w)a(v) &= \langle v, w\rangle_V,
    \end{aligned}
  \end{equation*}
  for $v, w \in V$. Notice here that the assignment $v \mapsto a(v)$ is $\bbC$-linear, while the map $v \mapsto a^*(v)$ is $\bbC$-antilinear. An isomorphism to the algebra $\Cl(W)$ is provided by sending
  \begin{equation*}
     a(v) + a^*(w) \longmapsto (v, \overline{w})
  \end{equation*}
  for $v, w \in V$ and extending by multiplicativity.
\end{remark}

\subsection{Lagrangians and their Composition} \label{SectionLagrangians}

We start with the following definition.

\begin{definition}[Lagrangian]
Let $W$ be a complex vector space with a real structure. A complex subspace $L \subset W$ is called {\em Lagrangian} if $\overline{L} = L^\perp$.
\end{definition}

Above, the orthogonal complement is taken with respect to the Hermitian structure, not the bilinear form $b$ defined in \eqref{Eq.Bracket}.
This implies in particular that $b(v, w) = 0$ for all $v, w \in L$, i.e.\ that $L$ is a {\em totally isotropic} subspace with respect to the bilinear form. In fact, a Lagrangian $L$ is always maximal among totally isotropic subspaces, in the sense that if $L \subset L^\prime$ for a totally isotropic subspace $L^\prime$, then $L = L^\prime$. Notice, a Lagrangian is always a closed subspace, as it is an orthogonal complement.

We will mainly deal with the situation where we have two complex vector spaces $W_0$, $W_1$ with real structure  and a Lagrangian $L_{01} \subset W_0 \oplus - W_1$. This allows to speak of the {\em composition} of two Lagrangians.

\begin{definition}[Composition]
Let $W_0, W_1, W_2$ be complex vector spaces with real structures and let $L_{01} \subset W_0 \oplus - W_1$ and $L_{12} \subset W_1 \oplus - W_2$ be Lagrangians. The {\em composition} of the two Lagrangians is the subspace $L_{02} = L_{01} \circ L_{12} \subset W_0 \oplus - W_2$ defined by
\begin{equation*}
L_{02}\coloneqq \bigl\{ (w_0, w_2) \mid \exists w_1 \in W_1 :(w_0, w_1) \in L_{01}, (w_1, w_2) \in L_{12}\bigr\},
\end{equation*}
in other words, their composition as linear relation.
\end{definition}

While it is easy to see that $L_{02}$ is always an isotropic subspace, i.e.\ the bilinear form vanishes identically on $L_{02}$, it is not clear that $L_{02}$ is also maximal. The following example illustrates that this is not always the case.

\begin{example} \label{ExampleNonComposable}
Consider the Hilbert space $\ell^2(\bbZ)$, with the real structure given by $\overline{e}_n = e_{-n}$, where $e_n$, $n \in \bbZ$, is the $n$-th standard basis vector. For $\alpha \in \bbR$, define the unbounded operator $Q_\alpha$ with domain
\begin{equation*}
  \mathrm{dom}(Q_\alpha) = \left\{ (a_n)_{n \in \bbZ} ~ \left| ~ \sum_{n \in \bbZ} (1+ e^{2\alpha n}) |a_n|^2 < \infty\right. \right\} \subseteq \ell^2(\bbZ), 
\end{equation*}
given by $Q_\alpha e_n = e^{\alpha n} e_n$. It is easy to check that $Q_\alpha$ is densely defined and closed, and therefore, the property $Q_\alpha^* = \overline{Q}_\alpha^{-1}$ shows that 
\begin{equation*}
L_\alpha := \mathrm{graph}(Q_\alpha)
\end{equation*}
 defines a Lagrangian in $\ell^2(\bbZ) \oplus \ell^2(\bbZ)$. Clearly, the composition of two such Lagrangians $L_{\alpha_1}$ and $L_{\alpha_2}$ is the graph of the operator $Q_{\alpha_1}Q_{\alpha_2}$.
 When $\alpha_1$ and $\alpha_2$ have the same sign, then $Q_{\alpha_1} Q_{\alpha_2} = Q_{\alpha_1+\alpha_2}$, so that $L_{\alpha_1} \circ L_{\alpha_2}$ is still a Lagrangian. However, if $\alpha_1$ and $\alpha_2$ have opposite sign, we have $Q_{\alpha_1}Q_{\alpha_2} \subset Q_{\alpha_1+\alpha_2}$, but the composition $Q_{\alpha_1} Q_{\alpha_2}$ is not closed (for example, if $\alpha_1 = - \alpha_2$, then $Q_{\alpha_1} Q_{\alpha_2} \subset \mathrm{id}$ but is not everywhere defined). Therefore, in the case that $\alpha_1$ and $\alpha_2$ have opposite sign, the composition of $L_{\alpha_1}$ and $L_{\alpha_2}$ is not closed, hence not a Lagrangian.
 \end{example}
 
 While in the example above, the composition of $L_{\alpha_1}$ and $L_{\alpha_2}$ may not be closed and therefore not maximal, at least its closure will always be maximal, hence a Lagrangian. However more extreme phenomena are possible:  After conjugating $Q_{\alpha_2}$ by a suitable orthogonal transformation $M$ of $\ell^2(\bbZ)$, the domain of $(M Q_{\alpha_2}M^*) \circ Q_{\alpha_1}$ can even be zero, in other words, the composition of the corresponding Lagrangians is zero.

\begin{remark} \label{RemarkLagrangianSplitting}
If $L_{01} \subset W_0 \oplus - W_1$, we can always orthogonally decompose
\begin{equation*}
  L_{01} = L^\prime_{01} \oplus \bigl(L_0 \oplus \{0\}\bigr) \oplus \bigl(\{0\} \oplus \overline{L}_1\bigr),
\end{equation*}
where $L_0$, $L_1$ are closed isotropic subspaces of $W_0$, respectively $W_1$, and the orthogonal complement $L^\prime_{01}$ is in {\em general position}, meaning that $L^\prime_{01}$ has a trivial intersection with $W_0$ and $W_1$. This implies that $L_{01}^\prime$ is the graph of a densely defined, closed invertible operator $Q: W_0^\prime \rightarrow W_1^\prime$, where $W_i^\prime = (L_i \oplus \overline{L}_i)^\perp$. One easily shows that in order for such a graph to be a Lagrangian, $Q$ must be invertible, and satisfy $Q^* = \overline{Q}^{-1}$. However, the operator may be unbounded in the infinite-dimensional setup, which makes the theory complicated, as Example~\ref{ExampleNonComposable} above shows.
\end{remark}

In the following, we will investigate under which conditions the composition is in fact a Lagrangian. To this end, write 
\begin{equation} \label{WL}
W \coloneqq W_0 \oplus - W_1 \oplus W_1 \oplus -W_2, \qquad L  \coloneqq L_{01} \oplus L_{12},
\end{equation}
 and let $P_L : W \rightarrow L$ be the orthogonal projection. Notice that $\overline{P}_L = P_{\overline{L}}$, the orthogonal projection onto $\overline{L}$.  We moreover set
 \begin{equation}
\begin{aligned} \label{DefinitionULsigmaK} 
  U &\coloneqq \bigl\{ (0, w, w, 0) \in W ~\bigl|~ w \in W_1\bigr\}, \\
  L_\sigma &\coloneqq \bigl\{ (v_0, v_1, v_1, v_2) \in W \mid (v_0, v_1) \in L_{01}, (v_1, v_2) \in L_{12} \bigr\},   \\
  K &\coloneqq \bigl\{ w \in W_1 ~\bigl|~ (0, w) \in L_{01}, (w, 0) \in L_{12}\bigr\}.
\end{aligned}
\end{equation}
We will further use the maps
\begin{equation} \label{DefinitionSigmaDelta}
\begin{aligned}
 \sigma: W &\longrightarrow W_1, ~~~~&  (v_0, v_1, v_1^\prime, v_2) &\longmapsto v_1 - v_1^\prime,\\
 \delta: W_1 &\longrightarrow W, \qquad& w &\longmapsto (0, w, w, 0).
\end{aligned}
\end{equation}
Notice that $\sigma^* = \overline{\delta}$, the conjugate of $\delta$. Clearly, $U = \delta W_1$ and $L_\sigma = \ker(\sigma) \cap L$.

\begin{lemma}
We have the identity
\begin{equation} \label{KernelPLdelta}
\image(\sigma|_L)^\perp = \ker(P_L\overline{\delta}) = \overline{K}.
\end{equation}
\end{lemma}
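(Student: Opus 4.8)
The plan is to prove the two equalities in \eqref{KernelPLdelta} separately: the first is a formal consequence of the adjoint relation $\sigma^* = \overline\delta$, and the second is obtained by unwinding the definitions of $\delta$, $L$ and $K$.

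First I would record the structural facts that are needed. Since $L_{01}$ and $L_{12}$ are Lagrangians, so is $L = L_{01} \oplus L_{12}$ inside $W = (W_0 \oplus -W_1) \oplus (W_1 \oplus -W_2)$; hence $L^\perp = \overline L$, the decomposition $W = L \oplus \overline L$ is orthogonal, and $\ker P_L = L^\perp = \overline L$. Moreover, writing $\iota_L \colon L \hookrightarrow W$ for the inclusion, we have $\sigma|_L = \sigma \circ \iota_L$, so its adjoint is $(\sigma|_L)^* = \iota_L^* \circ \sigma^* = P_L \circ \overline\delta$, using $\iota_L^* = P_L$ and $\sigma^* = \overline\delta$. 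In other words, $P_L\overline\delta$ is literally the adjoint of the (bounded) operator $\sigma|_L \colon L \to W_1$.

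For the first equality, $\image(\sigma|_L)^\perp = \ker(P_L\overline\delta)$ is then the standard identity $\image(A)^\perp = \ker(A^*)$ applied to $A = \sigma|_L$, which holds in any (possibly incomplete) Hermitian space whenever $A^*$ exists, directly from the definition of the adjoint and nondegeneracy of the form. Spelled out: for $w \in W_1$ one has $w \in \image(\sigma|_L)^\perp$ iff $\langle \ell, \overline\delta w\rangle_W = \langle \sigma\ell, w\rangle_{W_1} = 0$ for all $\ell \in L$, i.e.\ iff $\overline\delta w \in L^\perp = \ker P_L$, i.e.\ iff $P_L\overline\delta w = 0$. For the second equality I would argue: for $w \in W_1$, $w \in \ker(P_L\overline\delta)$ iff $\overline\delta w \in \overline L$; applying the involutive real structure of $W$ and using $\overline{\overline\delta w} = \delta(\overline w)$, this is equivalent to $\delta(\overline w) \in L$. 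Now $\delta(\overline w) = (0, \overline w, \overline w, 0)$, so by $L = L_{01} \oplus L_{12}$ this holds iff $(0, \overline w) \in L_{01}$ and $(\overline w, 0) \in L_{12}$, i.e.\ iff $\overline w \in K$, i.e.\ (the real structure being an involution) iff $w \in \overline K$.

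The only subtlety is that the spaces are not assumed complete, so one cannot take orthogonal projections onto arbitrary closed subspaces; this is precisely why it is essential that $L$ is a Lagrangian, which supplies both $L^\perp = \overline L$ and a genuine orthogonal projection $P_L$ with $\ker P_L = L^\perp$. Apart from that there is no real obstacle — the content of the lemma is just the identification of $P_L\overline\delta$ as the adjoint of $\sigma|_L$, after which both equalities fall out of the definitions.
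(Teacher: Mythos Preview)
Your proof is correct and follows essentially the same approach as the paper: the paper's argument also just unwinds $P_L\overline\delta\,\overline u = 0$ into the membership conditions $(0,u)\in L_{01}$, $(u,0)\in L_{12}$ via $L^\perp=\overline L$, exactly as you do in your second step. You are merely more explicit than the paper about the first equality (identifying $P_L\overline\delta$ as the adjoint of $\sigma|_L$), which the paper leaves implicit.
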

\begin{proof}
Let $u \in W_1$ with $0 = P_L \overline{\delta}\overline{u} = P_L(0, -\overline{u}, \overline{u}, 0)$. Then $(0, -\overline{u}) \in L_{01}^\perp$ and $(\overline{u}, 0) \in L_{12}^\perp$, hence $(0, u) \in L_{01}$, $(u, 0) \in L_{12}$ and $u \in K$. 
\end{proof}

\begin{lemma} \label{LemmaSplitting}
We have $P_L \overline{U} \perp L_\sigma$, and $P_L \overline{U}$ is dense in $L_\sigma^\perp \cap L$. Hence if $P_L \overline{U}$ and $P_{\overline{L}}U$ are closed, we have the orthogonal splitting
\begin{equation} \label{SplittingOfW}
  W = L_\sigma \oplus P_L \overline{U} \oplus \overline{L}_\sigma \oplus P_{\overline{L}} U.
\end{equation}
\end{lemma}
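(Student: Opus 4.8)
The plan is to verify the three assertions in order. First, $P_L\overline{U}\perp L_\sigma$: since $L_\sigma=\ker(\sigma)\cap L\subseteq L$ and $P_L$ is self-adjoint, for $x\in U$ and $y\in L_\sigma$ we have $\langle P_L\overline x, y\rangle=\langle \overline x, y\rangle$, and this vanishes because $\overline x\in \overline U$, $y\in\ker\sigma\subseteq W$, and $\overline U=\overline\delta W_1$ is precisely the orthogonal complement of $\ker\sigma$ (indeed $\sigma^*=\overline\delta$ by the remark after \eqref{DefinitionSigmaDelta}, so $\image(\sigma^*)=(\ker\sigma)^\perp$). Thus $\langle\overline x,y\rangle=0$.

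Second, density of $P_L\overline U$ in $L_\sigma^\perp\cap L$. The containment $P_L\overline U\subseteq L$ is clear. To see $P_L\overline U\subseteq (L_\sigma)^\perp$ within $W$, combine the first step with the fact that $L_\sigma^\perp\cap L$ is a closed subspace of $L$; so it suffices to show that anything in $L$ orthogonal to $P_L\overline U$ already lies in $L_\sigma$. Let $z\in L$ with $z\perp P_L\overline U$; then for all $w\in W_1$, $0=\langle z, P_L\overline\delta\,\overline w\rangle=\langle P_L z,\overline\delta\,\overline w\rangle=\langle z,\overline\delta\,\overline w\rangle=\langle\sigma z, w\rangle_{W_1}$ (using $P_Lz=z$ and $\sigma^*=\overline\delta$), hence $\sigma z=0$, i.e. $z\in\ker\sigma\cap L=L_\sigma$. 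This shows $(P_L\overline U)^\perp\cap L\subseteq L_\sigma$, equivalently $L_\sigma^\perp\cap L\subseteq\overline{P_L\overline U}$, which is the claimed density; together with the reverse inclusion $P_L\overline U\subseteq L_\sigma^\perp\cap L$ from step one, we get $\overline{P_L\overline U}=L_\sigma^\perp\cap L$.

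Third, assembling the splitting \eqref{SplittingOfW} under the closedness hypothesis. When $P_L\overline U$ is closed, the previous paragraph gives the orthogonal decomposition $L=L_\sigma\oplus P_L\overline U$. Applying the conjugation (an anti-unitary involution of $W$, which sends $L$ to $\overline L$, $L_\sigma$ to $\overline{L}_\sigma$, $U$ to $\overline U$, and $P_L$ to $P_{\overline L}$) to this decomposition yields $\overline L=\overline{L}_\sigma\oplus P_{\overline L}U$, again orthogonal, and with $P_{\overline L}U$ closed by hypothesis. Finally $W=L\oplus\overline L$ because $L$ is Lagrangian (so $\overline L=L^\perp$), and combining the three orthogonal splittings gives \eqref{SplittingOfW}.

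The main obstacle is the density statement in step two: one must be careful that all the manipulations $\langle P_Lz,\overline\delta\,\overline w\rangle=\langle z,\overline\delta\,\overline w\rangle$ are legitimate (they are, since $z\in L$ so $P_Lz=z$), and that no completeness of $W$ is being smuggled in — indeed the argument only uses self-adjointness of orthogonal projections onto closed subspaces and the adjoint relation $\sigma^*=\overline\delta$, both of which hold in the pre-Hilbert setting for the closed subspace $L$. The passage from density to an honest orthogonal direct sum is exactly where the closedness hypotheses on $P_L\overline U$ and $P_{\overline L}U$ are needed, and without them one only obtains a dense subspace, as anticipated in Example~\ref{ExampleNonComposable}.
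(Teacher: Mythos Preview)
Your proof is correct and follows essentially the same route as the paper: both identify $(P_L\overline{U})^\perp\cap L$ with $L_\sigma$ via the adjoint relation $(P_L\overline{\delta})^*=\sigma P_L^*$, then conjugate and use $W=L\oplus\overline{L}$. The paper compresses this into the single chain $P_L\overline{U}^\perp\cap L=\image(P_L\overline{\delta})^\perp=\ker((P_L\overline{\delta})^*)=\ker(\sigma P_L^*)=\ker(\sigma)\cap L=L_\sigma$, while you spell out the same computation elementwise; one small remark is that your claim ``$\overline{U}$ is precisely $(\ker\sigma)^\perp$'' is true here (a direct calculation shows $\overline{U}=\{(0,a,-a,0)\}$ and $\ker\sigma=\{(v_0,v_1,v_1,v_2)\}$ are orthogonal complements), but the justification you give via $\image(\sigma^*)=(\ker\sigma)^\perp$ would in general only yield density---fortunately only the inclusion $\overline{U}\perp\ker\sigma$ is needed for step one.
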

\begin{proof}
This follows from the calculation
\begin{equation*}
  P_L \overline{U}^\perp \cap L = \image(P_L \overline{\delta})^\perp = \ker\bigl((P_L \overline{\delta})^*\bigr) = \ker\bigl( \sigma P_L^*\bigr) = \ker(\sigma) \cap L = L_\sigma,
\end{equation*}
which uses that $\image(f)^\perp = \ker(f^*)$ for any linear map between inner product spaces. Hence $P_L \overline{U}$ is dense in $L_\sigma^\perp \cap L$. Conjugating this, we get that $P_{\overline{L}} {U}$ is dense in $\overline{L}_\sigma^\perp \cap \overline{L}$. The decomposition \eqref{SplittingOfW} follows.
\end{proof}

\begin{theorem}[Composition] \label{ThmComposition}
Let $W_0, W_1, W_2$ be complex vector spaces with real structures and let $L_{01} \subset W_0 \oplus - W_1$ and $L_{12} \subset W_1 \oplus - W_2$ be Lagrangians. Assume that $W_0, W_1, W_2$ are complete. Then the following statements are equivalent.
\begin{enumerate}
\item[{\normalfont ($i$)}] The map $\sigma$ has closed range when restricted to $L = L_{01} \oplus L_{12}$.
\item[{\normalfont ($ii$)}] The subspaces $P_L \overline{U}$ and $P_{\overline{L}} U$ are closed.
\end{enumerate}
Moreover, when these equivalent statements hold, the composition $L_{02}$ of $L_{01}$ and $L_{12}$ is a Lagrangian in $W_0 \oplus - W_2$.
\end{theorem}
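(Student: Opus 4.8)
The plan is to first establish the equivalence of ($i$) and ($ii$), and then to deduce that $L_{02}$ is a Lagrangian from ($ii$) by computing $L_{02}^\perp$ directly.

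\emph{Equivalence of ($i$) and ($ii$).} Since $W_0,W_1,W_2$ are complete, so is $W$ and its closed subspace $L$, and $\sigma|_L\colon L\to W_1$ is a bounded operator between Hilbert spaces. Its adjoint is $(\sigma|_L)^*=P_L\overline\delta\colon W_1\to L$, using $\sigma^*=\overline\delta$ and that the adjoint of the inclusion $L\hookrightarrow W$ is $P_L$. By the closed range theorem, $\sigma|_L$ has closed range if and only if $P_L\overline\delta$ does, and $\image(P_L\overline\delta)=P_L\overline\delta(W_1)=P_L\overline U$ since $\overline\delta(W_1)=\overline{\delta(W_1)}=\overline U$. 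Hence ($i$) is equivalent to $P_L\overline U$ being closed. Finally $P_{\overline L}U=\overline{P_L\overline U}$ because $\overline P_L=P_{\overline L}$, and conjugation is an anti-unitary, in particular homeomorphic, involution; so $P_L\overline U$ is closed if and only if $P_{\overline L}U$ is, giving ($i$)$\Leftrightarrow$($ii$).

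\emph{The composition is a Lagrangian.} Assume ($ii$), and let $p\colon W\to W_0\oplus -W_2$ be the orthogonal projection onto the first and last summands, so $p^*$ is the isometric inclusion $(w_0,w_2)\mapsto(w_0,0,0,w_2)$. Unravelling definitions one checks $p(L_\sigma)=L_{02}$, where $L_\sigma=\ker\sigma\cap L$; since $p(L_\sigma)^\perp=(p^*)^{-1}(L_\sigma^\perp)$ for any bounded operator, it suffices to identify $L_\sigma^\perp$ and pull it back along $p^*$. Now
\begin{equation*}
 L_\sigma^\perp=(\ker\sigma\cap L)^\perp=\overline{(\ker\sigma)^\perp+L^\perp}=\overline{\overline U+\overline L},
\end{equation*}
using $(\ker\sigma)^\perp=\image\sigma^*=\image\overline\delta=\overline U$ (closed, as $\delta$ is bounded below) and $L^\perp=\overline L$ (as $L=L_{01}\oplus L_{12}$ is a Lagrangian). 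This is the \emph{only} place the hypothesis enters: $\overline U+\overline L=P_L\overline U\oplus\overline L$ is an orthogonal sum, and $P_L\overline U$ is closed by ($ii$), so $\overline U+\overline L$ is already closed and $L_\sigma^\perp=\overline U+\overline L$ with no closure needed. (One may also read this off the splitting in Lemma~\ref{LemmaSplitting}.)

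It remains to match the pullback with the definition of the composition. A vector $(y_0,y_2)$ lies in $p(L_\sigma)^\perp$ iff $(y_0,0,0,y_2)=\xi+\eta$ with $\xi=(0,-w',w',0)\in\overline U$ and $\eta=(y_0,w',-w',y_2)\in\overline L$; writing out $\eta\in\overline{L_{01}}\oplus\overline{L_{12}}$ and using that $L_{01},L_{12}$ are Lagrangians, this is equivalent to the existence of $u\in W_1$ (namely $u=-\overline{w'}$) with $(\overline{y_0},u)\in L_{01}$ and $(u,-\overline{y_2})\in L_{12}$, i.e.\ to $(\overline{y_0},-\overline{y_2})=\overline{(y_0,y_2)}\in L_{01}\circ L_{12}=L_{02}$. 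Hence $L_{02}^\perp=p(L_\sigma)^\perp=\overline{L_{02}}$, which is precisely the statement that $L_{02}$ is a Lagrangian; closedness of $L_{02}$ is automatic, since $L_{02}^\perp$ is closed and conjugation is a homeomorphism. I expect the main obstacle to be bookkeeping rather than concept: keeping the real structures on $W_0,-W_1,W_1,-W_2$ and the signs they introduce straight through the final computation, and making sure the closure in $L_\sigma^\perp=\overline{\overline U+\overline L}$ is genuinely removed by ($ii$).
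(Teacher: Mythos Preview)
Your proof is correct. The equivalence $(i)\Leftrightarrow(ii)$ is handled just as in the paper via the closed range theorem; your extra remark that $P_{\overline L}U=\overline{P_L\overline U}$, so the two halves of $(ii)$ are automatically equivalent, makes explicit something the paper leaves implicit.

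For the second part your route is genuinely different. The paper proceeds in two separate steps: it first proves that $L_{02}$ is closed by quoting an external result (for closed subspaces $X,Y$ of a Hilbert space, $X+Y$ is closed iff $P_{X^\perp}Y$ is closed) and chaining it through $L+U$, $L_\sigma+U$, and $P_{U^\perp}L_\sigma\cong L_{02}$; it then proves maximality by taking $(v_0,v_2)\perp L_{02}\oplus\overline{L_{02}}$ and using the splitting $W=L_\sigma\oplus P_L\overline U\oplus\overline L_\sigma\oplus P_{\overline L}U$ to exhibit $(v_0,v_2)\in L_{02}$. Your argument instead computes $L_{02}^\perp$ in one shot: from $L_{02}=p(L_\sigma)$ you get $L_{02}^\perp=(p^*)^{-1}(L_\sigma^\perp)$, identify $L_\sigma^\perp=\overline U+\overline L$ (the only place $(ii)$ is used, to remove the closure), and then read off $(p^*)^{-1}(\overline U+\overline L)=\overline{L_{02}}$. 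This is more self-contained, avoids the external closed-subspace lemma, and yields closedness and maximality simultaneously. The paper's approach, on the other hand, makes the role of the decomposition of $W$ from Lemma~\ref{LemmaSplitting} more visible, which is reused later in the Gluing Theorem.
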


In particular, this shows that in finite dimensions, the composition of two Lagrangians is always a Lagrangian. Moroever, if $L_{01}$ or $L_{12}$ is the graph of a {\em bounded} operator $Q$ with $Q^* = \overline{Q}^{-1}$ (e.g.\ a real unitary), then the composition $L_{02}$ is always a Lagrangian again.

\begin{example}
The converse of the above theorem is {\em not} true: If $W_0 = W_2 = \{0\}$, so that $L_{01}$ and $L_{12}$ are simply Lagrangians in $W_1$, then obviously their composition is $\{0\}$, which is trivially Lagrangian in $W_0 \oplus - W_2 = \{0\}$. In this case condition $(i)$ above is that the sum of $L_{01}$ and $L_{12}$ is closed, which may well be not the case. 

In that setting, the equivalent conditions of Thm.~\ref{ThmComposition} are implied by the statement that the difference $P_{\overline{L}_{01}} - P_{{L}_{12}}$ of orthogonal projections onto $\overline{L}_{01}$, respectively $L_{12}$,  is a compact operator (here the conjugate $\overline{L}_{01}$ appears, as $L_{01}$ by definition is a Lagrangian in $-W_1$). Namely, this in turn implies that $\sigma = P_{L_{01}} - P_{L_{12}}$ is a Fredholm operator, in particular has closed range. 
%In contrast, the {\em Segal condition} is that $P_{\overline{L}_{01}} - P_{{L}_{12}}$ is a Hilbert-Schmidt operator (c.f.\ Remark~\ref{RemarkSegal}).
\end{example}

\begin{proof}
The equivalence $(i) \Leftrightarrow (ii)$, follows from the closed range theorem, which implies that $\image(\sigma|_L) = \image(\sigma P_L^*)$ is closed if and only if $\image((\sigma P_L^*)^*) = \image(P_L \overline{\delta}) = P_L \overline{U}$ is closed.

To see these statements imply that $L_{02}$ is a Lagrangian, we use Thm.~2.1 in \cite{ClosedSubspaces}, which states that for two closed subspaces $X, Y \subseteq W$,
\begin{equation} \label{Theorem2.1}
\text{\itshape{$X+Y$ is closed if and only if $P_{X^\perp} Y$ is closed in $X^\perp$}},
\end{equation}
where $P_{X^\perp}$ is the orthogonal projection onto $X^\perp$ in $W$. Applying this to $X = L$ and $Y = U$, we obtain that the closedness of $P_{\overline{L}}{U}$ implies that $L+U$ is closed. Therefore $\ker(\sigma|_{L+U}) = L_\sigma + U$ is closed as well. Hence using \eqref{Theorem2.1} now for $X = U$ and $Y = L_\sigma$, we get that the closedness of $L_\sigma + U$ implies that $P_{U^\perp} L_\sigma$ is closed. However, since $U^\perp \cong \overline{U}\oplus W_0   \oplus - W_2$ and $\overline{U} \perp L_\sigma$, we have $P_{U^\perp} L_\sigma \cong \{0\} \oplus L_{02}$ so that $L_{02}$ is closed.

So far we know that $L_{02}$ is a closed isotropic subspace. To see that it is maximal, let $(v_0, v_2) \perp L_{02} \oplus \overline{L}_{02}$. Then $(v_0, 0, 0, v_2) \perp L_\sigma \oplus \overline{L}_\sigma$. Hence by the direct sum decomposition \eqref{SplittingOfW}, we have $(v_0, 0, 0, v_2) = P_{\overline{L}}\delta w + P_L \overline{\delta}w^\prime$ for some $w, w^\prime \in W_1$, using that $P_L \overline{U}$ and $P_{\overline{L}} U$ are closed. Hence for all $\ell_{01} \in L_{01}$ and $\ell_{12} \in L_{12}$, we have
\begin{equation*}
\begin{aligned}
\bigl\langle \overline{\ell}_{01}, (v_0, 0)\bigr\rangle + \bigl\langle \overline{\ell}_{12}, (0, v_2)\bigr\rangle &= \bigl\langle (\overline{\ell}_{01}, \overline{\ell}_{12}), P_{\overline{L}}\delta w + P_L \overline{\delta}w^\prime\bigr\rangle\\
&= \bigl\langle (\overline{\ell}_{01}, \overline{\ell}_{12}), P_{\overline{L}}\delta w\bigr\rangle\\
&= \bigl\langle (\overline{\ell}_{01}, \overline{\ell}_{12}), \delta w\bigr\rangle\\
&= \langle \overline{\ell}_{01}, (0, w)\rangle + \langle \overline{\ell}_{12}, (w, 0)\rangle.
\end{aligned}
\end{equation*}
This implies that $(v_0, -w) \in L_{01}$, $(-w, v_2) \in L_{12}$, hence $(v_0, v_2) \in L_{02}$. From the assumption $(v_0, v_2) \perp L_{02} \oplus \overline{L}_{02}$, we get $v_0 = v_2 = 0$, hence $L_{02} \oplus \overline{L}_{02} = W_0 \oplus - W_2$, as $L_{02}$ is closed.
\end{proof}

\begin{remark}[The category of Lagrangian relations] \label{RemarkCategory}
The {\em category of Lagrangian relations} is the category where objects are complex Hilbert spaces with a real structure and morphisms are Lagrangian subspaces. While this idea works fine for finite-dimensional spaces, in the infinite-dimensional situation, one encounters the problem that not all morphisms can be composed, as illustrated by Example~\ref{ExampleNonComposable}. The theorem above allows to repair this by taking {\em polarized} Hilbert spaces, as we discuss now.

To start with, we say that closed subspaces $U$, $V$ of a Hilbert space $W$ are {\em close} if the difference $P_U - P_V$ of the corresponding orthogonal projections is compact. This defines an equivalence relation on the set of closed subspaces of $W$. A {\em sub-Lagrangian} is an isotropic subspace $L$ that is close to a Lagrangian; in other words, there exists a finite-dimensional space $K$ such that $L\oplus K$ is a Lagrangian.

To define the desired category, we start with the objects: These are Hilbert spaces $W$, equipped with an equivalence class $[L]$ of sub-Lagrangians, where $L \sim L^\prime$ if $L$ and $L^\prime$ are close. The set of morphisms between two such objects $(W_1, [L_1])$ and $(W_0, [L_0])$ is the set of Lagrangian subspaces $L_{01} \subset W_0 \oplus - W_1$ such that there exists $L_0^\prime \in [L_0]$ with  $L_0^\prime \subset P_0 L_{01}$ and $L_1^\prime \in [L_1]$ with $\overline{L}_1^\prime \subset P_1 L_{01}$, where $P_i$ is the projection onto $W_i$.

 If $L_{01} \subset W_0 \oplus - W_1$ and $L_{12} \subset W_1\oplus - W_2$ are two such Lagrangians, then there exist $L_1^\prime, L_1^{\prime\prime} \in [L_1]$ such that $\overline{L}_1^\prime \subseteq P_1L_{01}$, $L_1^{\prime\prime} \subseteq P_1L_{12}$, hence
\begin{equation} \label{SigmaLInRem}
  \sigma(L) = P_1 L_{01} + P_1 L_{12} \supseteq \overline{L}^\prime_1 + {L}^{\prime\prime}_1 = \image\bigl( \mathrm{id} + P_{L_1^{\prime\prime}} - P_{L_1^{\prime}}\bigr).
\end{equation}
This is closed since $P_{L_1^{\prime\prime}} - P_{L_1^{\prime}}$ is compact, hence $\mathrm{id} + P_{L_1^\prime} - P_{L_1^{\prime\prime}}$ is a Fredholm operator. Now by condition $(i)$ of Thm.~\ref{ThmComposition}, the composition of $L_{01}$ and $L_{12}$ is a Lagrangian. In fact, \eqref{SigmaLInRem} furthermore shows that $\sigma(L)^\perp$ is finite-dimensional, hence by \eqref{KernelPLdelta}, the space $K$ is always finite-dimensional in this category.

Of course, the condition that $P_U - P_V$ is compact in the definition of closeness can be replaced by the condition that $P_U - P_V \in \mathcal{I}(W)$, where $\mathcal{I}$ is any operator ideal. Using the resulting notion of ``$\mathcal{I}$-closeness'', one obtains subcategories of the above. 
%A popular choice is $\mathcal{I} = \mathrm{HS}$, the ideal of Hilbert-Schmidt operators, which is related to Segal's equivalence criterion (c.f.\ Remark~\ref{RemarkSegal}). 
\end{remark}

\subsection{Lagrangian Bimodules and their Tensor Product} \label{SectionBimodules}

Given a Hermitian vector space $W$ with a compatible real structure, we can form the associated Clifford algebra $\Cl(W)$. Moreover, given a Lagrangian $L_{01} \subset W_0 \oplus - W_1$, we will see below that the exterior algebra $\Lambda L_{01}$ is a $\Cl(W_0)$-$\Cl(W_1)$-bimodule in a natural way. This section is devoted to the question in how far this ``second quantization'' procedure gives a functor from the category of polarized Hilbert spaces and Lagrangians discussed in Remark~\ref{RemarkCategory} to the category $\mathrm{sAlg}$ of algebras and bimodules, where objects are algebras and morphisms between algebras $A$, $B$ are $A$-$B$-bimodules, and the composition is given by the tensor product. 

One of the challenges here is that the target category is a bicategory, so that one has to weaken the notion of a functor. More precisely, if $\mathcal{C}$ is a ($1$-)category a functor $T: \mathcal{C} \rightarrow \mathrm{sAlg}$ assigns an algebra $T(x)$ to every object $x$ of $\mathcal{C}$, and an $T(y)$-$T(x)$-bimodule $T(f)$ to every morphism $f: x \rightarrow y$ in $\mathcal{C}$. However with $\mathrm{sAlg}$ being a $2$-category, it would be too restrictive to require strict functoriality; instead, the functor comes with isomorphisms
\begin{equation*}
  \tau_{f, g}: T(g \circ f) \longrightarrow T(g) \otimes_{T(y)} T(f)
\end{equation*}
for any pair of composable morphisms $f: x \rightarrow y$ and $g: y \rightarrow z$. These isomorphisms need to satisfy the coherence condition that the diagram
\begin{equation} \label{CoherenceGeneral}
  \begin{tikzcd}[column sep={3.6cm,between origins}]
  & T(h \circ g \circ f) \ar[dl, "\tau_{h\circ g, f}"', bend right=20] \ar[dr, "\tau_{h, g \circ f}", bend left=20] & \\
  T(h \circ g) \otimes_{T(y)} T(f) \ar[d, "\tau_{h, g} \otimes \mathrm{id}"']& & T(h) \otimes_{T(z)} T(g \circ f) \ar[d, "\mathrm{id} \otimes \tau_{g, f}"] \\
  \bigl(T(h) \otimes_{T(z)} T(g)\bigr) \otimes_{T(y)} T(f) \ar[rr, "\cong"] & & T(h) \otimes_{T(z)} \bigl( T(g) \otimes_{T(y)} T(f)\bigr) 
  \end{tikzcd}
\end{equation}
commute for every triple $f, g, h$ of composable morphisms; here the bottom arrow is the associator of the tensor product.

It turns out that this ``second quantization functor'' is in fact {\em not} a functor, due to the presence of the spaces
\begin{equation*}
  K = \bigl\{ w \in W_1 ~\bigl|~ (0, w) \in L_{01}, (w, 0) \in L_{12} \bigr\},
\end{equation*}
which force the canonical map between $\Lambda L_{02}$ and $\Lambda L_{01} \otimes_{\Cl(W_1)} \Lambda L_{12}$ to be zero, c.f.\ Thm.~\ref{ThmGluing} below. Instead, the tensor product is canonically isomorphic to the twisted module $\Lambda L_{02}\otimes \Lambda^{\mathrm{top}} K$. In this sense, the chiral anomaly has a purely algebraic origin: The non-functoriality of second quantization. 

\medskip

Let $W$ be a complex vector space with a real structure and associated bilinear form $b(\cdot, \cdot)$, as in \eqref{Eq.Bracket}. Let $L \subset W$ be a Lagrangian. The exterior algebra $\Lambda L$ over a Lagrangian $L \subset W$ gives rise to a module over the Clifford algebra $\Cl(W)$ in a natural way, by letting elements of $L$ act via wedging and elements of $\overline{L}$ via insertion. In formulas, we have
\begin{equation*}
\begin{aligned}
  v \cdot \xi &\coloneqq v \wedge \xi, \qquad \overline{v} \cdot \xi \coloneqq \iota(v) \xi,
\end{aligned}
\end{equation*}
for $v \in L$, $\xi \in \Lambda L$, where $\iota(v)$ denotes insertion of $v$ using the inner product. Since $L$ is a Lagrangian, any element $w \in W$ has a unique decomposition $w = v_1 + \overline{v}_2$ with $v_1, v_2 \in L$, hence the above definitions define the action on $W \subset \Cl(W)$ completely. One can then check that the action satisfies the Clifford relations \eqref{CliffordRelationBracket}, hence the action extends to all of $\Cl(W)$ by the universal property of the Clifford algebra.
The module $\Lambda L$ is naturally $\bbZ_2$-graded via its even-and-odd grading, and the action is compatible with the grading, in the sense that even elements of $\Cl(W)$ preserve the grading while odd elements reverse it.

The element $\Omega_L \coloneqq 1 \in \Lambda^0 L \subset \Lambda L$ is called the {\em vacuum vector} of the module $\Lambda L$. It has the property that
\begin{equation*}
  \overline{w}\cdot \Omega_L = 0
\end{equation*}
for all $w \in L$; any element with this property must be a scalar multiple of $\Omega_L$. The following result is standard \cite[\S1.3]{Prat}.

\begin{lemma} \label{LemmaOnHoms}
If $M$ is any $\Cl(W)$-module, then module homomorphisms $\varphi: \Lambda L \rightarrow M$ are in one-to-one correspondence with elements of the {\em Pfaffian Line}
\begin{equation} \label{DefPfaffianLine}
  \mathrm{Pf}(L, M) \coloneqq \bigl\{ m \in M ~\bigl|~ \forall \ell \in L:\overline{\ell} \cdot m = 0\bigr\}.
\end{equation}
More precisely, for any $m \in \mathrm{Pf}(L, M)$, there is a unique homomorphism $\varphi$ such that $\varphi(\Omega_L) = m$ and any homomorphism $\varphi$ is determined by its value on $\Omega_L$. Moreover, if $\varphi(\Omega_L) \neq 0$, then $\varphi$ is injective.
\end{lemma}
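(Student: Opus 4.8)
\noindent\emph{Proof proposal.} The plan is to produce the correspondence explicitly and then verify the two refinements. Going from homomorphisms to the Pfaffian line, I would send $\varphi \mapsto \varphi(\Omega_L)$; this lands in $\mathrm{Pf}(L,M)$ because $\overline{\ell}\cdot\Omega_L = \iota(\ell)1 = 0$ for every $\ell\in L$, so $\overline{\ell}\cdot\varphi(\Omega_L) = \varphi(\overline{\ell}\cdot\Omega_L) = 0$. The key structural fact is that $\Omega_L$ generates $\Lambda L$ as a $\Cl(W)$-module: indeed $v_1\wedge\cdots\wedge v_k = v_1\cdots v_k\cdot\Omega_L$, where on the right the $v_i\in L\subset W\subset\Cl(W)$ act on $\Lambda L$ by wedging. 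Consequently any homomorphism is determined by its value on $\Omega_L$, so $\varphi\mapsto\varphi(\Omega_L)$ is injective, and this also tells us how to build the inverse.

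For surjectivity, given $m\in\mathrm{Pf}(L,M)$ I would set $\varphi(v_1\wedge\cdots\wedge v_k)\coloneqq v_1\cdots v_k\cdot m$, with the $v_i$ now acting as Clifford elements on $M$. This is well defined: the right-hand side is multilinear in $(v_1,\dots,v_k)$, and since $L$ is isotropic we have $v\cdot v = \tfrac12 b(v,v) = 0$ in $\Cl(W)$ for $v\in L$, so the expression vanishes whenever two consecutive arguments coincide; a multilinear map with this property is alternating and hence factors through $\Lambda L$. Clearly $\varphi$ is linear with $\varphi(\Omega_L) = m$, and it remains to check that $\varphi$ commutes with the $\Cl(W)$-action, i.e.\ $\varphi(w\cdot\xi) = w\cdot\varphi(\xi)$ for $w\in W$. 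Writing $w = v_1 + \overline{v}_2$ with $v_1,v_2\in L$ (possible because $L$ is Lagrangian), the case $w = v_1\in L$ is immediate from associativity of the module action. For $w = \overline{v}_2\in\overline{L}$, a short induction on $k$, moving $\overline{v}_2$ to the right past $v_1,\dots,v_k$ via the Clifford relation $\overline{v}_2 v_i + v_i\overline{v}_2 = b(\overline{v}_2,v_i) = \langle v_2,v_i\rangle$, yields
\[ \overline{v}_2\cdot\bigl(v_1\cdots v_k\cdot m\bigr) = \sum_{i=1}^k (-1)^{i-1}\langle v_2,v_i\rangle\, v_1\cdots\widehat{v_i}\cdots v_k\cdot m \;+\; (-1)^k v_1\cdots v_k\cdot\bigl(\overline{v}_2\cdot m\bigr); \]
the last term vanishes since $m\in\mathrm{Pf}(L,M)$, and the sum is precisely $\varphi(\iota(v_2)\xi) = \varphi(\overline{v}_2\cdot\xi)$. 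Thus $\varphi$ is a homomorphism and the correspondence is onto.

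For the last claim, assume $m = \varphi(\Omega_L)\neq 0$ and let $\varphi(\xi) = 0$; I want to show $\xi = 0$. Write $\xi = \sum_k\xi_k$ with $\xi_k\in\Lambda^k L$ and suppose $\xi\neq 0$, with $d$ maximal such that $\xi_d\neq 0$. Since insertion is adjoint to wedging, the operator $\iota(\ell_d)\cdots\iota(\ell_1)$ restricted to $\Lambda^d L$ is (up to conjugation) the pairing against the decomposable $\ell_1\wedge\cdots\wedge\ell_d$; as decomposables span $\Lambda^d L$ and the inner product is definite, $\xi_d\neq 0$ forces the existence of $\ell_1,\dots,\ell_d\in L$ with $\iota(\ell_d)\cdots\iota(\ell_1)\xi_d = c\,\Omega_L$ for some $c\neq 0$, while $\iota(\ell_d)\cdots\iota(\ell_1)\xi_k = 0$ for $k < d$ for degree reasons. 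Applying $\overline{\ell}_d\cdots\overline{\ell}_1\in\Cl(W)$ to $\varphi(\xi) = 0$ and using that $\varphi$ intertwines the actions gives $0 = \varphi\bigl(\iota(\ell_d)\cdots\iota(\ell_1)\xi\bigr) = c\,\varphi(\Omega_L) = c\,m$, a contradiction. Hence $\varphi$ is injective. I expect the only friction to be bookkeeping — keeping the signs and the $\iota$–$\wedge$ adjunction straight, and verifying that the $\ell_i$ can be chosen so the top contraction of $\xi_d$ is nonzero; everything else follows directly from the Lagrangian splitting $W = L\oplus\overline{L}$ and the defining relations of $\Cl(W)$.
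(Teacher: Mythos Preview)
Your proof is correct. The paper does not actually prove this lemma---it simply cites \cite[\S1.3]{Prat} as a reference for this standard result---so there is no paper proof to compare against; your argument is a clean and complete verification of all the claims, including the injectivity statement via contraction to top degree.
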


Let $W_0$, $W_1$ be Hermitian vector spaces with real structures and let $L_{01} \subset W_0 \oplus -W_1$ be a Lagrangian. By the above considerations, $\Lambda L_{01}$ is a $\bbZ_2$-graded  $\Cl(W_0 \oplus - W_1)$-module. Since 
\begin{equation*}
\Cl(W_0 \oplus - W_1) \cong \Cl(W_0) \otimes \Cl(-W_1) \cong \Cl(W_0) \otimes \Cl(W_1)^{\mathrm{op}}
\end{equation*}
c.f.\ \eqref{OppositeAlgebraEq}, $\Lambda L_{01}$ is equivalently a $\bbZ_2$-graded $\Cl(W_0)$-$\Cl(W_1)$-bimodule. Explicitly, the bimodule structure is given in terms of the $\Cl(W_0 \oplus -W_1)$-structure via
\begin{equation}\label{BimoduleModuleStructure}
\begin{aligned}
w_0 \cdot \xi  &\coloneqq (w_0, 0) \cdot \xi, \qquad \xi \cdot w_1 &\coloneqq (-1)^{|\xi|} (0, w_1) \cdot \xi
\end{aligned}
\end{equation}
for homogeneous elements $\xi \in \Lambda L_{01}$ and $w_i \in W_i$; here the sign comes  from the convention \eqref{SignInOppMult}.
Since the vacuum vector $\Omega_{01}$ of $\Lambda L_{01}$ is annihilated by $\overline{L}_{01}$, we have
\begin{equation} \label{LagrangianSwap}
    0 = \overline{(w_0, w_1)} \cdot \Omega_{01} = (\overline{w}_0, -\overline{w}_1) \cdot \Omega_{01} = \overline{w}_0\cdot\Omega_{01} - \Omega_{01}\cdot \overline{w}_1
  \end{equation}
for all $(w_0, w_1) \in L_{01}$. 

\medskip

\begin{theorem}[Gluing]\label{ThmGluing}
Let $W_0$, $W_1$, $W_2$ be three Hermitian vector spaces with real structure, let $L_{01} \subset W_0 \oplus - W_1$, $L_{12} \subset W_1 \oplus - W_2$ be two Lagrangians and let $L_{02}$ be their composition. Suppose that the space $K$ defined in \eqref{DefinitionULsigmaK} has finite dimension $n$. In the notation of \S\ref{SectionLagrangians}, assume moreover
\begin{enumerate}
\item[{\normalfont (1)}] the map $\sigma$  has closed range when restricted to $L$;
\item[{\normalfont (2)}] the spaces $P_L \overline{U}$ and $P_{\overline{L}} U$ are closed;
\item[{\normalfont (3)}] the composition $L_{02}$ is a Lagrangian in $W_0 \oplus - W_2$.
\end{enumerate}
Then there exists a unique module isomorphism
\begin{equation*}
\begin{aligned}
\alpha: \Lambda L_{02} \otimes {\Lambda^{\Top} K} &\longrightarrow \Lambda L_{01} \otimes_{\Cl(W_1)} \Lambda L_{12}\\
\text{such that} \qquad\Omega_{02} \otimes u_1 \wedge \ldots \wedge u_n &\longmapsto \Omega_{01} \cdot {u}_1 \dots \cdot {u}_n \otimes \Omega_{12}.\qquad\qquad\qquad
\end{aligned}
\end{equation*}
\end{theorem}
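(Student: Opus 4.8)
Proof proposal.

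The plan is to build $\alpha$ as a Clifford-module homomorphism via Lemma~\ref{LemmaOnHoms}, and then to prove it bijective by means of the orthogonal splitting of Lemma~\ref{LemmaSplitting}, which is available precisely because hypotheses (1)--(3) hold. Set $N := \Lambda L_{01}\otimes_{\Cl(W_1)}\Lambda L_{12}$ and fix a basis $u_1,\dots,u_n$ of $K$. Since each $u_i\in K$ has $(0,u_i)\in L_{01}$, unwinding \eqref{BimoduleModuleStructure} gives $\Omega_{01}\cdot u_1\cdots u_n=(0,u_1)\wedge\dots\wedge(0,u_n)\in\Lambda^n L_{01}$; in particular $(u_1,\dots,u_n)\mapsto m:=\Omega_{01}\cdot u_1\cdots u_n\otimes\Omega_{12}$ is alternating and multilinear and hence descends to a linear map $\Lambda^{\Top}K\to N$, $\omega\mapsto m_\omega$. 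By Lemma~\ref{LemmaOnHoms} the homomorphism $\alpha$ with $\alpha(\Omega_{02}\otimes\omega)=m_\omega$ exists and is unique (any homomorphism out of $\Lambda L_{02}\otimes\Lambda^{\Top}K$ being determined by its values on the $\Omega_{02}\otimes\omega$) as soon as $m_\omega\in\mathrm{Pf}(L_{02},N)$, i.e.\ $\overline{(v_0,v_2)}\cdot m_\omega=0$ for all $(v_0,v_2)\in L_{02}$. The one idea needed for this is to choose the witness $w\in W_1$ (so $(v_0,w)\in L_{01}$, $(w,v_2)\in L_{12}$) \emph{orthogonal to $K$}; this is possible since the admissible $w$ form a coset of the finite-dimensional space $K$. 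Unwinding \eqref{BimoduleModuleStructure} once more, $\overline{(v_0,v_2)}$ acts on $N$ as $\overline{v_0}\cdot m_\omega-(-1)^n m_\omega\cdot\overline{v_2}$ (left factor, right factor), and in $(\overline{v_0},0)=\overline{(v_0,w)}+(0,\overline w)$ inside $W_0\oplus -W_1$ the first term lies in $\overline{L_{01}}$ and acts on $\Lambda L_{01}$ by the insertion $\iota((v_0,w))$, which annihilates $(0,u_1)\wedge\dots\wedge(0,u_n)$ because $\langle(v_0,w),(0,u_i)\rangle=\langle w,u_i\rangle_{W_1}=0$ for all $i$ (here $w\perp K\ni u_i$); thus $\overline{v_0}\cdot m_\omega$ equals the contribution of the second term, which after moving $\overline w$ across the balanced tensor product and using the relation \eqref{LagrangianSwap} for $L_{12}$ (applied to $(w,v_2)$) to replace $\overline w\cdot\Omega_{12}$ by $\Omega_{12}\cdot\overline{v_2}$ becomes exactly $(-1)^n m_\omega\cdot\overline{v_2}$. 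Hence $\overline{(v_0,v_2)}\cdot m_\omega=0$, and $\alpha$ exists; the remaining sign-checking against the conventions of \S\ref{SectionBimodules} is routine.

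It then remains to see that $\alpha$ is an isomorphism, and this is the heart of the matter. By the last clause of Lemma~\ref{LemmaOnHoms}, $\alpha$ is injective as soon as $m_\omega\ne 0$ in $N$, and it is surjective once $N$ is generated over $\Cl(W_0\oplus -W_2)$ by the line $\mathbb{C}\,m_\omega$; I would extract both from an explicit model of $N$. Writing $N=\Lambda L/(\overline U\cdot\Lambda L)$ with $L=L_{01}\oplus L_{12}\subset W:=W_0\oplus -W_1\oplus W_1\oplus -W_2$ as in \S\ref{SectionLagrangians}, the splitting \eqref{SplittingOfW}, together with its consequence $L=L_\sigma\oplus P_L\overline U$, lets one verify that the factor $\Lambda(P_L\overline U)$ of $\Lambda L=\Lambda L_\sigma\otimes\Lambda(P_L\overline U)$ is entirely absorbed into $\overline U\cdot\Lambda L$, while the residual relations on $\Lambda L_\sigma$ — controlled by the degeneracy $\overline U\cap L$, which is finite-dimensional because $K$ is — organize, via the short exact sequence
\begin{equation*}
 0\longrightarrow\delta K\longrightarrow L_\sigma\xrightarrow{\ (v_0,v_1,v_1,v_2)\,\mapsto\,(v_0,v_2)\ }L_{02}\longrightarrow 0
\end{equation*}
and hypothesis (3) (so that $\Lambda L_{02}$ is the standard irreducible $\Cl(W_0\oplus -W_2)$-module), into a canonical identification of $N$ with $\Lambda L_{02}\otimes\Lambda^{\Top}K$; tracing through the identifications sends $\Omega_{02}\otimes u_1\wedge\dots\wedge u_n$ to $m$, exhibiting the two-sided inverse of $\alpha$. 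When $K=0$ the space $\Lambda(P_L\overline U)$ is a full isotropic complement, $N$ is simply $\Lambda L_{02}$, and this reduces to the transversal situation, an extension of the Gluing Lemma~2.2.8 of \cite{StolzTeichnerElliptic}.

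The main obstacle is exactly this last step: turning the abstract orthogonal splitting of Lemma~\ref{LemmaSplitting} into a workable presentation of $\Lambda L_{01}\otimes_{\Cl(W_1)}\Lambda L_{12}$, and checking that, after the reduction forced by $\overline U$, precisely one copy of the line $\Lambda^{\Top}K$ survives — equivalently, that $\mathrm{Pf}(L_{02},N)$ is one-dimensional and spanned by $m_\omega$. This is the place where finiteness of $K$ and the closedness hypotheses (1)--(2) are indispensable: without them $P_L\overline U$ and $\overline U\cap L$ need not behave well and $N$ can degenerate, as Example~\ref{ExampleNonComposable} already signals at the level of Lagrangian composition. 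Everything before it — the construction of $\alpha$ and its uniqueness — is bookkeeping, the one substantive trick being the choice of the witness $w$ orthogonal to $K$.
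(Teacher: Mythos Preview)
Your construction of $\alpha$ is correct and in fact cleaner than the paper's: observing that $\Omega_{01}\cdot u_1\cdots u_n$ is literally the wedge product $(0,u_1)\wedge\cdots\wedge(0,u_n)$ immediately gives antisymmetry, and your trick of choosing the witness $w\perp K$ kills the cross terms in one stroke. The paper instead first proves an auxiliary vanishing claim (that products of fewer than $n$ elements of $K$ give zero in the tensor product) and uses it both for antisymmetry and for the Pfaffian property; your route sidesteps that lemma entirely.

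The genuine gap is in the isomorphism step. You propose to identify $N=\Lambda L/(\overline U\cdot\Lambda L)$ with $\Lambda L_{02}\otimes\Lambda^{\Top}K$ via the splitting $L=L_\sigma\oplus P_L\overline U$, and to read off the inverse of $\alpha$. But the action of $\overline U$ on $\Lambda L$ is not simply ``wedging by $P_L\overline u$'': by the formula the paper records, it also has insertion terms coming from $P_{\overline L}\overline u$, so the factor $\Lambda(P_L\overline U)$ is not ``entirely absorbed'' in any naive sense, and untangling the quotient requires exactly the recursive analysis the paper carries out. More seriously, the paper explicitly remarks (immediately after its proof) that the vector-space isomorphism $\varphi:\Lambda L/(\overline U\cdot\Lambda L)\to\Lambda L_{02}$ obtained from this splitting by the obvious initial data is \emph{not} a bimodule homomorphism, hence \emph{not} $\alpha^{-1}$. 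So ``tracing through the identifications'' cannot exhibit the inverse of $\alpha$ without further input, and you have not supplied that input.

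The paper avoids this trap by separating the two directions: surjectivity is proved by a direct Clifford-algebra induction (this is where hypothesis~(1), i.e.\ $\sigma(L)^\perp=\overline K$, is used to manufacture the elements $(v_0,v_1)\in L_{01}$, $(v_1',v_2')\in L_{12}$ with $\overline w=v_1-v_1'$), and injectivity is proved by showing $N\neq 0$ via a recursively defined linear functional (this is where hypothesis~(2) is used, to make the recursion well-posed). Neither step claims to produce the inverse map. If you want to pursue your single-identification strategy, you would need to exhibit a bimodule structure on the target of your vector-space identification and verify compatibility, which is at least as much work as the paper's two separate arguments.
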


Remembering the definition of the Pfaffian line $\mathrm{Pf}(L, M)$  from \eqref{DefPfaffianLine}, the above theorem in particular provides a canonical isomorphism
\begin{equation*}
  \Lambda^{\mathrm{top}} K \cong \mathrm{Pf}\bigl(L_{02}, \Lambda L_{01} \otimes_{\Cl(W_1)} \Lambda L_{12} \bigr).
\end{equation*}
Notice that if each of the spaces $W_0$, $W_1$ and $W_2$ is complete, Thm.~\ref{ThmComposition} implies that the conditions (1) and (2) are equivalent and in fact imply that $L_{02}$ is a Lagrangian. In particular, notice that any two composable morphisms in the category of Lagrangian relations described in Remark~\ref{RemarkCategory} satisfy the assumptions of the theorem. However, since we plan to apply this result to non-complete spaces in the sequel, we state it in this more general version. Our proof requires condition (1) for the surjectivity and condition (2) for the injectivity of $\alpha$.

%\begin{remark}[The Segal criterion] \label{RemarkSegal}
%Assume that $W_0 = W_2 = \{0\}$ so that $L \coloneqq L_{01}$ and $L^\prime \coloneqq L_{12}$ are simply Lagrangians in $W \coloneqq W_1$, which we assume to be complete in this example. It then follows from a theorem of Segal (c.f.\ e.g.\ \cite[Thm.~3.4.2]{SpinorsInHilbertSpace}, \cite[Thm.~1.2.2]{Prat} or \cite{PressleySegal}) that the Hilbert space completions of $\Lambda \overline{L}$ and $\Lambda L^\prime$ are equivalent irreducible representations for the $C^*$-algebra completion $\Cl[W]$ of the Clifford algebra if the difference $P_{\overline{L}} - P_{L^\prime}$ of the corresponding orthogonal projections is Hilbert-Schmidt\footnote{Here the switch from $L$ to $\overline{L}$ is due to the fact that $\Lambda L$ is a right-module while $\Lambda L^\prime$ is a left-module.}. Thm.~\ref{ThmGluing} suggests that in the algebraic setting, the condition that $P_{\overline{L}} - P_{L^\prime}$ is compact plays a similar role, as this implies assumption $(1)$ of the theorem, c.f.\ Remark~\ref{RemarkCategory}. 
%\end{remark}

\begin{proof} 
The proof consists of three steps: The first is well-definedness of the map $\alpha$, where we have to show that it exists and moreover is uniquely determined by its value on a single element. The next two steps are surjectivity and injectivity.

\medskip

\noindent {\em Step 1: Well-definedness.} We start by showing
\begin{equation} \label{ClaimB}
\bigl\{ \Omega_{01} \cdot {u}_1 \cdots {u}_n \otimes \Omega_{12} ~\bigl|~ u_1, \dots, u_n \in K \bigr\} \subseteq \mathrm{Pf}\bigl(L_{02}, \Lambda L_{01} \otimes_{\Cl(W_1)} \Lambda L_{12}\bigr),
\end{equation}
We first show that if $k < n$, then for all  $u_1, \dots, u_k \in K$ and $w_1, \dots, w_m \in K^{\perp}$ with $m$ arbitrary, we have
\begin{equation} \label{ClaimA}
  \Omega_{01} \cdot {w}_1 \cdots {w}_m\cdot{u}_1 \cdots {u}_k \otimes \Omega_{12} = 0. 
\end{equation}
To this end, first observe that for any $v \in W_1$, we have $v\cdot\overline{v} + \overline{v}\cdot v = b(v, \overline{v}) = \|v\|^2$. Since $k<n$, we can find $v \in K$ with $v \perp u_1, \dots, u_k$ and $\| v \|^2 = 1$. Since $v \in K$, we also have $v \perp {w}_1, \dots, {w}_m$. Hence
\begin{equation*}
\begin{aligned}
  \Omega_{01} \cdot {w}_1 \cdots {w}_m  \cdot {u}_1 \cdots {u}_k \otimes \Omega_{12} &=  \Omega_{01} \cdot {w}_1 \cdots {w}_m \cdot {u}_1 \cdots {u}_k \cdot (v \cdot \overline{v}+ \overline{v}\cdot v)\otimes \Omega_{12}\\
  &=\Omega_{01} \cdot {w}_1 \cdots {w}_m\cdot {u}_1 \cdots {u}_k\cdot v \otimes \overline{v}\cdot \Omega_{12} \\
  &\quad +(-1)^{k+m} \Omega_{01}\cdot \overline{v}\cdot {w}_1 \cdots {w}_m \cdot{u}_1 \cdots {u}_k\cdot v\otimes \Omega_{12},
\end{aligned}
\end{equation*}
where we used that due to $v \perp u_j, w_j$, we have $\overline{v}\cdot u_j + u_j\cdot \overline{v} = \overline{v}\cdot w_j + w_j\cdot \overline{v} = 0$. Now since $v \in K$, we have $(0, v) \in L_{01}$ and $(v, 0) \in L_{12}$. Therefore, the identity \eqref{LagrangianSwap} implies that $\Omega_{01}\cdot \overline{v} = 0$ and $\overline{v}\cdot \Omega_{12} = 0$. This shows \eqref{ClaimA}.

To show that $\Omega_{01} \cdot {u}_1 \cdots {u}_n \otimes \Omega_{12}$ is annihilated by $\overline{L}_{02}$, let $(w_0, w_2) \in L_{02}$. By the definition of $L_{02}$, there exists $w_1 \in W_1$ such that $(w_0, w_1) \in L_{01}$ and $(w_1, w_2) \in L_{12}$. Then by \eqref{LagrangianSwap}, we have
\begin{equation*}
\begin{aligned}
  \overline{w}_0\cdot\Omega_{01} \cdot u_1 \cdots {u}_n \otimes \Omega_{12} &= \Omega_{01} \cdot \overline{w}_1\cdot {u}_1 \cdots {u}_n \otimes \Omega_{12}\\
  &= (-1)^n \Omega_{01} \cdot{u}_1 \cdots {u}_n \otimes \overline{w}_1 \cdot \Omega_{12}\\
  &\qquad + \sum_{j=1}^n (-1)^{j} \langle {w}_1, u_j\rangle \Omega_{01}\cdot {u}_1\cdots \widehat{{u}_j}\cdots {u}_n \otimes \Omega_{12}.\\
  &= (-1)^n \Omega_{01} \cdot{u}_1 \cdots {u}_n \otimes \Omega_{12} \cdot \overline{w}_2,
\end{aligned}
\end{equation*}
where we used that by \eqref{ClaimA}, the sum is zero. This shows that $\Omega_{01} \cdot{u}_1 \cdots {u}_n \otimes \Omega_{12}$ is annihilated by $(\overline{w}_0, - \overline{w}_2)$. This finishes the proof of \eqref{ClaimB}.

By \eqref{ClaimA}, the map
\begin{equation*}
  u_1 \wedge \cdots \wedge u_n \longmapsto  \Omega_{01} \cdot {u}_1 \cdots {u}_n \otimes \Omega_{12}
\end{equation*}
is well-defined, since the right hand side is antisymmetric in the entries $u_1, \dots, u_n$: swapping $u_i$ and $u_j$ generates Clifford elements that are products of less than $n$ factors, hence are zero. \eqref{ClaimB} shows that this element is annihilated by $\overline{L}_{02}$, so that $\alpha$ is indeed a well-defined bimodule homomorphism.

\medskip

\noindent {\em Step 2: Surjectivity.} Let us now show that $\alpha$ is surjective, using condition (1). To this end, it suffices to show that elements of the form
\begin{align}\label{Eq.SurjectivityElement}
\Omega_{01} \cdot {w}_1 \cdots {w}_k\cdot  {u}_1 \cdots {u}_n \otimes \Omega_{12}
\end{align}
are in the image of $\alpha$ for $w_1, \dots, w_k \in K^\perp$ and $u_1, \dots, u_n \in K$, as these elements generate $\Lambda L_{01} \otimes_{\Cl(W_1)} \Lambda L_{12}$ as a $\Cl(W_0)$-$\Cl(W_2)$-bimodule. To this end, we claim that there exist elements $a_\nu \in \Cl(W_0)$, $b_\nu \in \Cl(W_2)$, $\nu = 1, \dots, N$ such that 
\begin{align*}
\Omega_{01} \cdot {w}_1 \cdots {w}_k\cdot  {u}_1 \cdots {u}_n \otimes \Omega_{12} =  \sum_{\nu =1}^N a_\nu \cdot  \Omega_{01} \cdot {u}_1 \cdots {u}_n \otimes \Omega_{12} \cdot b_\nu.
\end{align*}
To prove this claim, we proceed by induction. The case $k=0$ is trivial. Now assume that we have proven the claim for all $k^{\prime} < k$. Since $w_1 \in K^\perp = \overline{\image(\sigma)}$ (c.f.\ \eqref{KernelPLdelta}), there are $(v_0, v_1) \in L_{01}$ and $(v^{\prime}_1, v^{\prime}_2) \in L_{12}$ such that $\overline{w}_1 = v_1 - v_1^{\prime}$.  Applying the identity \eqref{LagrangianSwap}, we conclude
\begin{align*}
\Omega_{01} \cdot {w}_1 \cdots {w}_k\cdot u_1 \cdots u_n \otimes \Omega_{12}  &= \Omega_{01}\cdot (\overline{v}_1 - \overline{v}^{\prime}_1)\cdot {w}_2 \cdots {w}_k \cdot u_1 \cdots u_n \otimes \Omega_{12}\\
& = \overline{v}_0\cdot  \Omega_{01} \cdot {w}_2 \cdots {w}_k\cdot u_1 \cdots u_n\otimes \Omega_{12} \\
&\quad -(-1)^{n+k-1} \Omega_{01}  \cdot {w}_2 \cdots {w}_k\cdot u_1 \cdots u_n\otimes \Omega_{12} \cdot \overline{v}_2^{\prime} \\
&\!\!\!\!\!\!\!\!\!\!\!\!\!\!\!  - \sum_{j=2}^k (-1)^{j} b( \overline{v}_1^{\prime}, w_j ) \,\Omega_{01} \cdot {w}_2 \cdots \widehat{{w}_j} \cdots {w}_k\cdot u_1 \cdots u_n\otimes \Omega_{12}\\
&\!\!\!\!\!\!\!\!\!\!\!\!\!\!\!  - \sum_{j=1}^n (-1)^{j+k+1} b( \overline{v}_1^{\prime}, u_j ) \,\Omega_{01} \cdot {w}_2 \cdots {w}_k\cdot u_1 \cdots \widehat{u_j} \cdots u_n\otimes \Omega_{12}
\end{align*}
The last summand is zero by \eqref{ClaimA}, which finishes the induction step and therefore proves the claim.

\medskip

\noindent {\em Step 3: Injectivity.} It follows from Lemma~\ref{LemmaOnHoms} that $\alpha$ is injective if and only if the element $\Omega_{01} \cdot u_1 \cdots u_n \otimes \Omega_{12}$ is a non-zero element of the tensor product. Since we have seen above that this element generates $\Lambda L_{01} \otimes_{\Cl(W_1)} \Lambda L_{12}$ as a bimodule, this amounts to showing that the tensor product is not the trivial module. Using condition (2), we will establish this by showing that there exist non-vanishing linear maps out of it, following along the lines of arguments of Stolz and Teichner \cite[\S2.2]{StolzTeichnerElliptic}.

To start with, we observe that since the Clifford algebra $\Cl(W_1)$ is generated by $W_1$, the tensor product over the Clifford algebra can be realized as the quotient of the tensor product $\Lambda L_{01} \otimes \Lambda L_{12}$ over $\bbC$ by the subspace
\begin{equation*}
  Q = \mathrm{span}\bigl\{ \xi \cdot w \otimes \xi^\prime - \xi \otimes w \cdot \xi^\prime ~\bigl|~ w \in W_1, \xi \in \Lambda L_{01}, \xi^\prime \in \Lambda L_{12} \}.
\end{equation*}
On the other hand, in the notation of \S\ref{SectionLagrangians}, we have the isomorphism
\begin{equation} \label{IsoLPieces}
  \Lambda L_{01} \otimes \Lambda L_{12} \cong \Lambda(L_{01} \oplus L_{12})  = \Lambda L,
\end{equation}
which is a $\Cl(W)$-module since $L$ is a Lagrangian in $W$. Under this isomorphism, the subspace $Q$ corresponds to the subspace $\overline{U} \cdot \Lambda L$, using the Clifford action of $\Cl(W)$ on $\Lambda L$: Indeed, under the isomorphism \eqref{IsoLPieces}, the action of $\overline{U}$ is given by
\begin{equation*}
\begin{aligned}
  (0, -{w}, {w}, 0) \cdot \xi \otimes \xi^\prime &= (0, -w) \cdot \xi \otimes \xi^\prime + (-1)^{|\xi|} \xi \otimes (w, 0)\cdot \xi^\prime\\
  &= (-1)^{|\xi|+1} \bigl( \xi \cdot w \otimes \xi^\prime - \xi \otimes w \cdot \xi^\prime \bigr).
\end{aligned}
\end{equation*}
Our goal is therefore to construct a linear functional $\varphi$ on $\Lambda L$ which is not identically zero, but vanishes on $\overline{U}\cdot\Lambda L$. Under the isomorphism \eqref{IsoLPieces}, $\varphi$ then descends to a non-zero linear functional on the quotient $\Lambda L / (\overline{U} \cdot \Lambda L) \cong \Lambda L_{01} \otimes_{\Cl(W_1)} \Lambda L_{12}$, showing that the tensor product is non-zero. To this end, we observe that with a view on  \eqref{SplittingOfW}, we have the factorization $\Lambda L \cong  \Lambda L_\sigma \otimes \Lambda P_L \overline{U}$, which uses that $P_L \overline{U}$ is closed, by assumption (2). Hence under this isomorphism, every element of $\Lambda L$ is a sum of elements of the form
\begin{equation*}
\ell_1 \wedge \cdots \wedge \ell_k \otimes P_L \overline{u}_1 \wedge \cdots \wedge P_L\overline{u}_l
\end{equation*}
for $\ell_1, \dots, \ell_k \ L_{\sigma}$, $u_1, \dots, u_l \in U$. A linear map $\varphi: \Lambda L \rightarrow V$ is therefore equivalently described by a family of multi-linear maps
\begin{equation} \label{ComponentsOfPhi}
 \varphi_{k, l}: \underbrace{ L_\sigma \times \cdots \times L_\sigma}_k \times \underbrace{P_L \overline{U} \times \cdots \times P_L \overline{U}}_l \longrightarrow V,
\end{equation}
which are alternating in the first $k$ and last $l$ entries. On the other hand, the action of $\overline{U}$ on such an element is given by
\begin{equation} \label{ActionOfU}
\begin{aligned}
\overline{u}_1 \cdot \big( \ell_1 \wedge \cdots &\wedge \ell_k \otimes P_L \overline{u}_2 \wedge \cdots \wedge P_L\overline{u}_l \big) =(-1)^k\ell_1 \wedge \cdots \wedge \ell_k \otimes P_L \overline{u}_1 \wedge \cdots \wedge P_L\overline{u}_l \\
&\quad +\sum_{j=1}^k (-1)^{j-1} \langle P_L {u}_1, \ell_j\rangle \ell_1 \wedge \cdots \widehat{\ell_j} \cdots \wedge \ell_k \otimes P_L \overline{u}_2 \wedge \cdots \wedge P_L\overline{u}_l\\
&\quad+ \sum_{j=2}^l (-1)^{k+j} \langle P_L {u}_1, P_L \overline{u}_j \rangle \ell_1 \wedge \cdots \wedge \ell_k \otimes P_L \overline{u}_2 \wedge \cdots \widehat{P_L \overline{u}_j} \cdots \wedge P_L\overline{u}_l,
\end{aligned}
\end{equation}
where due to $P_L \overline{u}_1 \perp \Lambda L_\sigma$, no term appears that is a wedge product of $k+1$ elements of $L_\sigma$. Rearranging \eqref{ActionOfU}, it follows that in order to ensure $\varphi(\overline{U} \cdot \Lambda L) = 0$, the components $\varphi_{k, l}$ need to satisfy the relation
\begin{equation} \label{TheRelation}
\begin{aligned}
\varphi_{k, l}&\bigl(\ell_1 \wedge \cdots \wedge \ell_k \otimes P_L \overline{u}_1 \wedge \cdots \wedge P_L\overline{u}_l\bigr) \\
&=\sum_{j=1}^k (-1)^{k+j} \langle P_L {u}_1, \ell_j\rangle\varphi_{k-1, l-1}\bigl(\ell_1 \wedge \cdots \widehat{\ell_j} \cdots \wedge \ell_k \otimes P_L \overline{u}_2 \wedge \cdots \wedge P_L\overline{u}_l\bigr)\\
&\quad+ \sum_{j=2}^l (-1)^{j-1} \langle P_L {u}_1, P_L \overline{u}_j \rangle \varphi_{k, l-2} \bigl(\ell_1 \wedge \cdots \wedge \ell_k \otimes P_L \overline{u}_2 \wedge \cdots \widehat{P_L \overline{u}_j} \cdots \wedge P_L\overline{u}_l\bigr).
\end{aligned}
\end{equation}
Suppose first that $K=0$; then by \eqref{KernelPLdelta}, the element $u_1$ is uniquely determined by $P_L \overline{u}_1$ and the relation \eqref{TheRelation} can be used recursively to define the multi-linear maps, starting from an arbitrary family of alternating $k$-linear maps $\varphi_{k, 0}$ on $L_\sigma$. Using the calculation
\begin{equation*}
\begin{aligned}
  \langle P_L w_1, P_L \overline{w}_2\rangle &= \overline{\langle P_{\overline{L}} \overline{w}_1, P_{\overline{L}} {w}_2\rangle} \\
  &= \langle P_{\overline{L}} {w}_2, P_{\overline{L}} \overline{w}_1\rangle \\
  &= \langle (1-P_{{L}}) {w}_2, (1-P_{{L}}) \overline{w}_1\rangle\\
  &= \langle {w}_2, \overline{w}_1\rangle - \langle P_L{w}_2, P_L\overline{w}_1\rangle \\
  &= - \langle P_L{w}_2, P_L\overline{w}_1\rangle
\end{aligned}
\end{equation*}
for $w_1, w_2 \in U$, it is a combinatorial exercise to show that the maps $\varphi_{k, l}$ thus defined iteratively are in fact alternating in the last $l$ entries also, and hence indeed are the components \eqref{ComponentsOfPhi} of a linear functional $\varphi$ (which then is non-vanishing if not all of the $\varphi_{k, 0}$ were vanishing). 

In case that $K\neq 0$, the element $u_1$ is determined by $P_L \overline{u}_1$ only up to an element of $\delta K = \{(0, u, u, 0) \mid u \in K\}$. This forces additional conditions on the intitial maps $\varphi_{k, 0}$; it is not hard to work out that these conditions are precisely the requirement that $\varphi_{k, 0}(\ell_1, \dots, \ell_k) = 0$ unless at least $n$ of the $\ell_j$ are contained in $\delta K$.
\end{proof}

\begin{remark}
Suppose for simplicity that $K=0$ in the last part of the proof above. Then the projection onto $W_0 \oplus - W_2 \subseteq W$ induces an isomorphism $P_{02}: L_\sigma \rightarrow L_{02}$. Hence we may use $\varphi_{k, 0} := \Lambda^k P_{02}$ as initial components for the construction of the linear map $\varphi$ above. Defining $\varphi_{k, l}$ iteratively via \eqref{TheRelation} and passing to the quotient map, one therefore obtains a {\em vector space isomorphism} 
\begin{equation*}
\varphi: \Lambda L/(\overline{U}\cdot \Lambda L) \rightarrow \Lambda L_{02},
\end{equation*}
which sends (the class of) $\Omega_L$ to $\Omega_{02}$. On the other hand, restricting the $\Cl(W)$-action to $W_0 \oplus -W_2$, $\Lambda L$ is naturally a $\Cl(W_0)$-$\Cl(W_2)$-bimodule and since $\overline{U}$ graded commutes with $\Cl(W_0 \oplus -W_2)$, this bimodule structure passes to the quotient $\Lambda L/(\overline{U}\cdot \Lambda L)$. However, one can check that the map $\varphi$ defined before is {\em not} a bimodule homomorphism; in particular, the map $\varphi$ thus constructed is {\em not} the inverse of the canonical isomorphism $\alpha$.
\end{remark}

%\begin{example}
%Let $W_0 = W_1 = W_2 = (\bbC, \langle . , . \rangle)$ with $\langle z, w \rangle = \overline{z} \cdot w$ being the standard hermitian product. Then 
%\begin{align*}
%L_{01} = \bbC \cdot l_{01} \coloneqq  \bbC \cdot \frac{1}{\sqrt{2}}(1,1)
%\end{align*}
%defines a Lagrangian in $W_0 \oplus -W_1$. Similarly, we can define the Lagrangian $L_{12} \in W_1 \oplus -W_2$ as the span of the unit vector $l_{12}=\frac{1}{\sqrt{2}}(1,1)$.
%
%In that case, 
%\begin{align*}
%\Lambda L_{01} \otimes_{\Cl (W_1)} \Lambda L_{12} = \Span ( l_{01} \otimes \Omega_{12} + \Omega_{01} \otimes l_{12},  \Omega_{01} \otimes \Omega_{12} + l_{01} \otimes l_{12} ).
%\end{align*}
%
%Moreover, $L_{02} = \bbC l_{02} \coloneqq  \frac{1}{ \sqrt{2}} (1,1)$ is a Lagrangian  in $W_0 \oplus - W_2$ and the map $\alpha: \Lambda L_{01} \otimes_{\Cl (W_1)} \Lambda l_{12}  \rightarrow \Lambda L_{02}$ is characterized by
%\begin{align*}
%\alpha ( l_{01} \otimes \Omega_{12} + \Omega_{01} \otimes l_{12} ) &= l_{02}, \\
%\alpha (\Omega_{01} \otimes \Omega_{12} + l_{01} \otimes l_{12}) &= 2 \Omega_{02}.
%\end{align*}
%\end{example}

To close this section, we discuss coherence of the isomorphisms $\alpha$ defined above. It turns out that they do {\em not} fit into a diagram of the form \eqref{CoherenceGeneral}, due to the presence of the spaces $K$. 

Let $W_0, W_1, W_2$ and $W_3$ be four Hermitian vector spaces with a compatible real structure and let $L_{01}$, $L_{12}$, $L_{23}$ be Lagrangians between them. Assume that all possible (iterated) compositions of these are again Lagrangians (denoted by $L_{ij}$ for $0 \leq i < j \leq 3$), and that for each such composition, the conditions (1)--(2) of Thm.~\ref{ThmGluing} are satisfied. Moreover, assume that each of the spaces
\begin{equation}
\label{DefinitionKandM}
K_{ijk} \coloneqq \bigl\{ u \in W_j \mid (0, u) \in L_{ij}, (u, 0) \in L_{jk} \bigr\}, \qquad 0 \leq i < j < k \leq 3, 
\end{equation}
is finite-dimensional. Then from Thm.~\ref{ThmGluing}, we obtain isomorphisms
\begin{equation*}
\alpha_{ijk}: \Lambda L_{ik} \otimes\Lambda^{\mathrm{top}} K_{ijk} \longrightarrow \Lambda L_{ij} \otimes_{\Cl(W_j)} \Lambda L_{jk}, \qquad 0 \leq i < j < k \leq 3, 
\end{equation*}
Notice that for $u \in W_1$, $(u, 0) \in L_{12}$ implies that also $(u, 0) \in L_{13}$. Hence $K_{012} \subset K_{013}$ and similarly $K_{123} \subset K_{023}$. In particular, we have the orthogonal decompositions
\begin{equation} \label{DirectSumKijk}
K_{013} = K_{012} + K_{012}^\perp\cap K_{013} \quad \text{and} \quad K_{023} = K_{123} + K_{123}^\perp\cap K_{023}.
\end{equation}

\begin{lemma} \label{LemmaIsoOfComplement}
  The relation $\mathcal{R}$ given by $\mathcal{R} \coloneqq L_{12} \cap (K_{012}^\perp\cap K_{013} \oplus K_{123}^\perp\cap K_{023})$  is the graph of a vector space isomorphism
  \begin{equation*}
    \rho_0 : K_{012}^\perp\cap K_{013} \longrightarrow K_{123}^\perp\cap K_{023}.
  \end{equation*}
By definition, $\rho_0(u_1) = u_2$ for $u_1 \in K_{012}^\perp\cap K_{013}$, where $u_2 \in W_2$ is the unique element in $K_{123}^\perp\cap K_{023}$ such that $(u_1, u_2) \in L_{12}$.
\end{lemma}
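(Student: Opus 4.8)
The plan is to verify three things in order: that $\mathcal{R}$ is (a) the graph of a well-defined single-valued map $\rho_0$ defined on all of $K_{012}^\perp\cap K_{013}$, (b) that this map lands in $K_{123}^\perp\cap K_{023}$, and (c) that it is a linear bijection. The key structural input is that $L_{12}$ is a Lagrangian in $W_1\oplus -W_2$, so $b$ vanishes on it, combined with the explicit descriptions of the spaces $K_{ijk}$ in \eqref{DefinitionKandM} and the orthogonal decompositions \eqref{DirectSumKijk}.

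First I would establish existence and uniqueness of $u_2$. Given $u_1 \in K_{012}^\perp\cap K_{013}$, since $(u_1,0)\in L_{13}$ by definition of $K_{013}$ and $L_{13} = L_{12}\circ L_{23}$ is the composition, there exists $u_2\in W_2$ with $(u_1,u_2)\in L_{12}$ and $(u_2,0)\in L_{23}$; the latter says exactly $u_2\in K_{123}\oplus(K_{123}^\perp\cap K_{023})$, i.e.\ $u_2\in K_{023}$. To see $u_2$ is unique and can be taken in $K_{123}^\perp\cap K_{023}$: if $(u_1,u_2)$ and $(u_1,u_2')\in L_{12}$ then $(0,u_2-u_2')\in L_{12}$, so $u_2-u_2'\in K_{012}$ by definition — wait, $K_{012}=\{u\in W_1:\dots\}$ lives in $W_1$, so instead $(0,w)\in L_{12}$ with $w\in W_2$ forces, together with $(u_1,0)$, $(u_1-0, u_2 - u_2')$... let me restate: $(0,w)\in L_{12}$ means $w$ is in the kernel of the relation $L_{12}$ on the $W_2$ side; since $K_{123}$ is defined via $(0,u)\in L_{12}$, we get $w\in K_{123}$. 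Hence $u_2$ is well-defined modulo $K_{123}$, so there is a unique representative in $K_{123}^\perp\cap K_{023}$, and $\mathcal{R}$ — after intersecting with the complement — is genuinely a graph. Linearity of $\rho_0$ is immediate from linearity of $L_{12}$ as a subspace. For surjectivity, run the same argument with the roles of $W_1$ and $W_2$ swapped, using $K_{123}\subset K_{023}$ and that $L_{12}$, read as a relation from $W_2$ to $W_1$ (equivalently, its ``transpose''), also has the composition property; given $u_2\in K_{123}^\perp\cap K_{023}$, produce $u_1\in K_{013}$ with $(u_1,u_2)\in L_{12}$, unique modulo $K_{012}$.

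I would phrase this cleanly by noting that $\mathcal{R}$ is by construction a subspace of $(K_{012}^\perp\cap K_{013})\oplus(K_{123}^\perp\cap K_{023})$, so it suffices to check its two coordinate projections are isomorphisms onto the respective factors. Injectivity of the projection to the first factor is the statement ``$(0,u_2)\in L_{12}$, $u_2\in K_{123}^\perp\cap K_{023}$ $\Rightarrow u_2=0$'', which holds because such $u_2\in K_{123}$, and $K_{123}\cap(K_{123}^\perp\cap K_{023})=0$; symmetrically for the second. Surjectivity onto the first factor is exactly the existence argument above, extracting from $(u_1,0)\in L_{13}=L_{12}\circ L_{23}$ a bridging element and projecting it to $K_{123}^\perp\cap K_{023}$ — one must check this projected element still pairs with $u_1$ in $L_{12}$, which follows because the difference lies in $K_{123}\subset\ker$ of the $W_1$-side, hence adding it keeps us in $L_{12}$ via the identity $(0,k)\in L_{12}$ for $k\in K_{123}$; and surjectivity onto the second factor is the symmetric statement using $K_{023}=K_{123}\circ\ldots$, i.e.\ that every $u_2\in K_{023}$ arises as a bridge for some $u_1\in K_{013}$.

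The main obstacle I anticipate is bookkeeping rather than conceptual: keeping straight which $K$-space sits inside which (the inclusions $K_{012}\subset K_{013}$, $K_{123}\subset K_{023}$ from the remark preceding the lemma), and correctly identifying the ``kernel'' directions of the relation $L_{12}$ on each side with $K_{123}$ (on the $W_2$ side) versus the analogous space on the $W_1$ side — note these are not literally among the $K_{ijk}$ since $K_{012}$ involves $L_{12}$ and $L_{23}$, not $L_{12}$ alone. I would want to introduce a short auxiliary notation, e.g.\ $N_1 := \{w_1 \in W_1 : (w_1,0)\in L_{12}\}$ and $N_2 := \{w_2\in W_2:(0,w_2)\in L_{12}\}$, observe $N_2 = K_{123}$ while $N_1\supseteq K_{012}$ with $N_1\cap K_{013}$ possibly larger — but since we have restricted $\mathcal{R}$ to intersect $K_{012}^\perp\cap K_{013}$ on the first factor, only the $K_{012}$ part of $N_1$ matters and it is orthogonal to our domain, so the graph property survives. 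Once this dictionary is set up, each of the four checks (two injectivities, two surjectivities) is a one-line deduction from Lagrangianity of $L_{12}$ and the definition of composition.
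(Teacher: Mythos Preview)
Your approach is essentially the paper's: produce a bridging element via the composition $L_{13}=L_{12}\circ L_{23}$ (resp.\ $L_{02}=L_{01}\circ L_{12}$), observe its ambiguity lies in $K_{123}$ (resp.\ $K_{012}$), project to the orthogonal complement, and invoke linearity of $\mathcal{R}$.

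One correction: your claim $N_2=K_{123}$ is false in general. The condition $(0,w_2)\in L_{12}$ alone does not force $w_2\in K_{123}$; you also need $(w_2,0)\in L_{23}$. Fortunately, in every place you use this you are already assuming $u_2\in K_{023}$ (or that $u_2$ arose as a bridge for $(u_1,0)\in L_{13}$), which supplies exactly $(u_2,0)\in L_{23}$, so the conclusion $u_2\in K_{123}$ is valid there. The symmetric statement for $N_1$ has the same issue and the same fix via $u_1\in K_{013}$. Also, the isotropy of $L_{12}$ (vanishing of $b$) plays no role here; only the linear-relation structure and the composition description of $L_{13}$, $L_{02}$ are used. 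Once you clean up this bookkeeping, the two-projection argument you outline is complete.
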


\begin{proof}
Let $u_1 \in K_{012}^\perp\cap K_{013}$.  Because $K_{012}^{\perp} \cap K_{013}\subset K_{013}$, we have $(0, u_1) \in L_{01}$ and $(u_1, 0) \in L_{13}$.
The latter means that there exists $u_2 \in W_2$ such that $(u_1, u_2) \in L_{12}$ and $(u_2, 0) \in L_{23}$. 
Clearly, $u_2 \in K_{023}$. 
If $u_2^\prime$ is another such element, then $(0, w_2 - u_2^\prime) \in L_{12}$ and $(u_2 - u_2^\prime, 0) \in L_{23}$, hence $u_2-u_2^\prime \in K_{123}$. 
Conversely, modifying $u_2$ by an element of $K_{123}$ clearly does not change the properties $(u_1, u_2) \in L_{12}$ and $(u_2, 0) \in L_{23}$.
Therefore, there exists a unique choice $u_2 \in K_{123}^\perp$ with these properties.

Reversing the argument shows that $\mathcal{R}$ is the graph of a bijection. Since $\mathcal{R}$ is a linear relation, the bijection is a linear map.
\end{proof}

The map $\rho_0$ defined in the lemma above can be extended to a vector space isomorphism
\begin{equation} \label{DefinitionIsoPhi}
\begin{aligned}
  \rho: K_{013} \oplus K_{123} &\longrightarrow K_{012} \oplus K_{023}, \\ 
  (u_1 + v_1^\perp, u_2)& \longmapsto \bigl(u_1, u_2 +\rho_0(v_1^\perp)\bigr),
\end{aligned}
\end{equation}
where the definition is with respect to the decomposition \eqref{DirectSumKijk}.  This induces an isomorphism
\begin{equation*}
  \Lambda^{\mathrm{top}}\rho : \Lambda^{\mathrm{top}}(K_{013} \oplus K_{123}) \longrightarrow \Lambda^{\mathrm{top}}(K_{023} \oplus K_{012})
\end{equation*}
of determinant lines.

\begin{theorem} \label{ThmNumber}
The diagram  
\begin{equation*}
\begin{tikzcd}[column sep={8.7cm,between origins}, row sep={1.8cm,between origins}]
\Lambda L_{03} \otimes \Lambda^{\mathrm{top}} (K_{013} \oplus K_{123})
	\ar[r, " \mathrm{id}\otimes \frac{\Lambda^{\mathrm{top}}\rho}{|\det(\rho^*\rho)}"] 
	\ar[d, "\cong"'] 
	& 
	 	\Lambda L_{03} \otimes \Lambda^{\mathrm{top}} (K_{012} \oplus K_{023}) 
			\ar[d, "\cong"]
	 		\\
\bigl(\Lambda L_{03} \otimes \Lambda^{\mathrm{top}}K_{013}\bigr) \otimes \Lambda^{\mathrm{top}} K_{123} 
	\ar[d, "\alpha_{013} \otimes \mathrm{id}"'] 
	&  
		\bigl(\Lambda L_{03} \otimes \Lambda^{\mathrm{top}}K_{023} \bigr)\otimes \Lambda^{\mathrm{top}} K_{012} 
			\ar[d, "\alpha_{023} \otimes \mathrm{id}"] 
			\\
\bigl(\Lambda L_{01}\otimes_{\Cl(W_1)} \Lambda L_{13}\bigr) \otimes \Lambda^{\mathrm{top}} K_{123} 
	\ar[d, "\cong"'] 
		&  
		\bigl(\Lambda L_{02} \otimes_{\Cl(W_2)} \Lambda L_{23}\bigr) \otimes \Lambda^{\mathrm{top}}K_{012} 
			\ar[d, "\cong"]
			\\
\Lambda L_{01}\otimes_{\Cl(W_1)} \bigl(\Lambda L_{13} \otimes \Lambda^{\mathrm{top}} K_{123} \bigr)
	\ar[d, "\mathrm{id}\otimes\alpha_{123}"'] 
	 	& 
		\bigl(\Lambda L_{02} \otimes \Lambda^{\mathrm{top}}K_{012}\bigr)\otimes_{\Cl(W_2)} \Lambda L_{23}  
			\ar[d, "\alpha_{012}\otimes\mathrm{id}"]
			\\
\Lambda L_{01} \otimes_{\Cl(W_1)} \bigl(\Lambda L_{12} \otimes_{\Cl(W_2)} \Lambda L_{23} \bigr)\ar[r, "\cong"] 
	& \bigl( \Lambda L_{01} \otimes_{\Cl(W_1)} \Lambda L_{12} \bigr) \otimes_{\Cl(W_2)} \Lambda L_{23}
	\end{tikzcd}
\end{equation*}
commutes.
\end{theorem}

\begin{lemma}
\label{LemmaCompleteToBarL}
For each $v \in K_{123}^\perp \cap K_{023}$, there exist $x \in K_{013}^\perp$, $y \in K_{023}^\perp$ such that 
\[
(-\rho_0^*(v) - x, v + y) \in \overline{L}_{12} 
\]
\end{lemma}

\begin{proof}
Set $K := K_{013} \oplus K_{023}$. 
Consider the operator $T : \overline{L}_{12} \to K$ given by orthogonal projection.
Its adjoint $T^*$ is the orthogonal projection from $K$ onto $\overline{L}_{12}$, hence 
\[
\ker(T^*) = K \cap L_{12} = K_{012} \oplus K_{123} \oplus \mathrm{graph}(\rho_0).
\]
We therefore obtain that
\begin{equation*}
\mathrm{ran}(T) = \ker(T^*)^\perp = \mathrm{graph}(- \rho_0^*)
\end{equation*}
In other words, any element of the form $(-\rho_0^*(v), v)$ lies in the image of $T$.
We finish the proof by choosing a preimage and letting $(-x, y)$ be its $K^\perp$ component.
\end{proof}

\begin{proof}[of Thm.~\ref{ThmNumber}]
Let $u_1, \dots, u_l \in K_{012}$, $v_1, \cdots, v_ m\in K_{012}^\perp \cap K_{013}$ and $w_1, \cdots, w_n \in K_{123}$ be orthonormal bases and set
\begin{equation*}
  \Theta \coloneqq u_1 \wedge \cdots \wedge u_l \wedge v_1 \wedge \cdots \wedge v_m \otimes w_1 \wedge \cdots \wedge w_n,
\end{equation*}
a non-zero element of $\Lambda^{\mathrm{top}} K_{013} \otimes \Lambda^{\mathrm{top}} K_{123} \cong \Lambda^{\mathrm{top}} (K_{013} \oplus K_{123}) $. 
Then $\Lambda L_{03} \otimes \Lambda^{\mathrm{top}}(K_{013} \oplus K_{123})$ is generated as a bimodule by $\Omega_{03} \otimes \Theta$. Therefore, the constant $\lambda$ may be computed by computing both compositions on this vector.
The down--right composition sends 
\begin{equation}
\label{downright}
\Theta~~~ \longmapsto~~~\Omega_{01} \cdot u_1 \cdots u_l \cdot v_1 \cdots v_m \otimes \Omega_{12} \cdot w_1 \cdots w_n \otimes \Omega_{23},
\end{equation}
while the right--down composition sends 
\begin{equation}
\label{rightdown}
\Theta~~~ \longmapsto~~~\Omega_{01} \cdot u_1 \cdots u_l \otimes \Omega_{12} \cdot v_1^\prime \cdots v_m^\prime \cdot w_1 \cdots w_n \otimes \Omega_{23},
\end{equation}
where $v_j' = \rho (\rho^*\rho)^{-1}(v_j)$.
We we have to show that these two vectors are, in fact, equal.

By Lemma~\ref{LemmaCompleteToBarL}, for each $j=1, \dots, m$, there exist $x_j \in K_{013}^\perp$ and $y_j \in K_{023}$ such that $(-v_j - x_j, v_j' + y_j) \in \overline{L}_{12}$.
By \eqref{LagrangianSwap}, this means that
\begin{equation}
\label{SwapOver}
(v_j + x_j) \cdot \Omega_{12} = \Omega_{12} \cdot (v_j' + y_j).
\end{equation}
Here we used that $v_j = \rho^*(v_j') = \rho_0^*(v_j')$.
Using \eqref{SwapOver} $m$ times, the theorem therefore follows if we can prove that the right hand side of \eqref{downright} equals
\begin{equation}
\label{Replacement}
\Omega_{01} \cdot u_1 \cdots u_l \cdot (v_1 + x_1) \cdots (v_m + x_m) \otimes \Omega_{12} \cdot w_1 \cdots w_n \otimes \Omega_{23}
\end{equation}
and similarly, that on the right hand side of \eqref{rightdown}, we may replace $v_j'$ with $v_j' + y_j$ without changing the result.
Now, \eqref{Replacement} may be expanded as 
\[
\sum_{a=0}^m \sum_{i_1 < \dots < i_a} \epsilon_{i_1, \dots, i_a} \cdot \Omega_{01} \cdot u_1 \cdots u_l \cdot x_{i_1} \cdots x_{i_a} \cdot v_1 \cdots \widehat{v_{i_1}} \cdots \widehat{v_{i_a}} \cdots v_{n} \otimes \Omega_{12} \cdot w_1 \cdots w_n \otimes \Omega_{23},
\]
where $\epsilon_{i_1, \dots, i_a} \in \{\pm 1\}$ and $\widehat{v_i}$ indicates that the vector $v_i$  is omitted in the product.
The summand for $a=0$ is precisely the right hand side of \eqref{downright}, so we need to show all summands with $a \neq 0$ vanish.
However, writing
\begin{align*}
&\Omega_{01} \cdot u_1 \cdots u_l \cdot x_{i_1} \cdots x_{i_a} \cdot v_1 \cdots \widehat{v_{i_1}} \cdots \widehat{v_{i_a}} \cdots v_{n} \otimes \Omega_{12} \cdot w_1 \cdots w_n \otimes \Omega_{23} 
\\
&= (\mathrm{id} \otimes \alpha_{123})\bigl(\underbracket{\Omega_{01} \cdot u_1 \cdots u_l \cdot x_{i_1} \cdots x_{i_a} \cdot v_1 \cdots \widehat{v_{i_1}} \cdots \widehat{v_{i_a}} \cdots v_{n} \otimes \Omega_{13}} \otimes w_1 \wedge \cdots \wedge w_n \bigr)
\end{align*}
the underbracketed term vanishes by \eqref{ClaimA} if $a\neq 0$, as $u_1, \dots, u_l, x_1, \dots, x_m \perp K_{013}$.
This shows that, indeed \eqref{Replacement} agrees with the right hand side of \eqref{downright}.
For the right hand side of \eqref{rightdown}, one argues similarly and thereby finishes the proof.
\end{proof}

%%%%%%%%%%%%%%%%%%%%%%%%%%%%%%%%%%%%%%%%%%%%%%%%%%%%%%%%%%%%%%%%%

\section{Spin Geometry and Functorial Field Theory} \label{SectionSpinGeometry}

This is the geometric/analytic part of this paper. We start by introducing the class of manifolds we will be working with, then we discuss some features of boundary value problems for Dirac operators needed in the sequel; finally, we end with defining the desired twist functor.

\subsection{Geometry of Manifolds with Clifford Module} \label{SectionCliffordManifolds}

While the results  in the introduction where stated for spin manifolds and corresponding bordisms, it is useful to formulate our results in a more general context.

\begin{definition}[Clifford manifolds]
Fix a dimension $d \geq 0$ and a codimension $0 \leq k \leq d$. A {\em $d$-dimensional Clifford module} on a $(d-k)$-dimensional Riemannian manifold $Y$ (possibly with boundary) is a $\bbZ_2$-graded Hermitian vector bundle $\Sigma_Y$ with compatible connection, together with a parallel endomorphism field
\begin{equation*}
  \gamma: TY \oplus \underline{\bbR}^k \longrightarrow \mathrm{End}^-(\Sigma_Y)
\end{equation*}
satisfying the Clifford relations
\begin{equation} \label{OtherCliffordRelations}
   \gamma(v) \gamma(w) + \gamma(w)\gamma (v) = -2\langle v, w\rangle,
\end{equation}
for $v, w \in TY \oplus \underline{\bbR}^k$. Here $\underline{\bbR}^k$ denotes the $k$-dimensional trivial line bundle over $Y$. We will usually just say {\em Clifford module} if the dimension $d$ is fixed throughout and no confusion is likely to arise. A Riemannian manifold $Y$ with a ($d$-dimensional) Clifford module will be referred to as a {\em Clifford manifold}; we will call $\Sigma_Y$ the {\em spinor bundle} on $Y$. 

An {\em isomorphism} of two Clifford manifolds $(Y, \Sigma_Y, \gamma_Y)$ and $(Y^\prime, \Sigma_{Y^\prime}, \gamma_{Y^\prime})$ consists of an isometry $f: Y \rightarrow Y^\prime$ that is covered by a bundle isomorphism $F: \Sigma_Y \rightarrow \Sigma_{Y^\prime}$ which preserves the grading, the connection and the Hermitian inner product, and that intertwines the Clifford maps, in the sense that
\begin{equation*}
   \gamma_{Y^\prime}\bigl(df(w)\bigr)F(\psi) = F\bigl( \gamma_Y(w) \psi\bigr)
\end{equation*}
for all $w \in TY\oplus \underline{\bbR}^k$ and $\psi \in \Sigma_Y$.
\end{definition}

\begin{remarks}
Let us make the following comments on the definition above.
\begin{enumerate}[(1)]
\item The endomorphism field $\gamma$ turns $\Sigma_Y$ into a bundle of graded modules for the algebra bundle $\Cl(TY \oplus \underline{\bbR}^k)$. 
\item That $\Sigma_Y$ is $\bbZ_2$-graded means that it splits as a direct sum $\Sigma_Y = \Sigma_Y^+ \oplus \Sigma_Y^-$, where the summands are orthogonal with respect to the Hermitian scalar product. Since the connection is compatible, this implies that it preserves the summands. The operator
\begin{equation} \label{DefinitionJ}
  J ~~\widehat{=}~~ \begin{pmatrix} i & 0 \\ 0 & -i\end{pmatrix} \qquad \text{w.r.t} \qquad \Sigma_Y = \Sigma_Y^+\oplus \Sigma_Y^-
\end{equation}
is a complex structure and will be referred to as the {\em grading operator}.
\item An endomorphism of $\Sigma_Y$ is even, respectively odd, if it preserves, respectively exchanges, the summands $\Sigma_Y^\pm$. In particular, we require $\gamma(v)$ to be odd for each $v \in TY \oplus \underline{\bbR}^k$. That $\gamma$ is parallel means equivalently that we have the product rule 
\begin{equation*}
  \nabla_v^\Sigma\bigl(\gamma(w) \psi\bigr) = \gamma(\nabla_v w)\psi + \gamma(w) \nabla^\Sigma_v \psi,
\end{equation*}
for vector fields $v, w \in C^\infty(Y, TY \oplus \underline{\bbR}^k)$ and $\psi \in C^\infty(Y, \Sigma_Y)$, where $\nabla^\Sigma$ is the connection on $\Sigma_Y$ and $\nabla$ denotes the  Levi-Civita-connection on $TY \oplus \underline{\bbR}^k$.
\end{enumerate}
\end{remarks}

\begin{examples}
Clifford manifolds arise in the following more special situations, where we fix the dimension $d$ for all Clifford modules.
\begin{enumerate}[(1)]
\item A spin structure on a $d$-dimensional Riemannian manifold $X$, $d$ even, gives rise to a canonical spinor bundle $\Sigma_X$, which is a graded $\Cl(TX)$-module. More generally, if a $(d-k)$-dimensional manifold $Y$ is equipped with principal $\mathrm{Spin}_d$-bundle $P$ lifting the structure group of the frame bundle $\mathrm{Fr}(TY \oplus \underline{\bbR}^k)$, one can form the associated bundle $\Sigma_Y \coloneqq P \times_{\mathrm{Spin}_d} \Sigma$ where $\Sigma$ is the spin representation. This admits a canonical Clifford map, as the representation $\Sigma$ of $\mathrm{Spin}_d$ is naturally a graded $\Cl_d$-module.
\item Similar remarks hold when a spin structure is replaced by a spin$^c$ structure. The difference is that in the case of a spin structure, the Levi-Civita connection determines a canonical connection compatible with the metric and Clifford action, while in the spin$^c$ case, one has a certain freedom in the choice of a connection.
\item Any $(d-k)$-dimensional Riemannian manifold $Y$ admits a canonical Clifford module $\Sigma_Y = \Lambda (TY \oplus \underline{\bbR}^k)\otimes \bbC$, the complexified exterior algebra of $TY \oplus \underline{\bbR}^k$. The Clifford map is given by
\begin{equation*}
  \gamma(w) \xi \coloneqq \frac{\sqrt{2}}{2}\bigl(w \wedge \xi - i(w) \xi\bigr),
\end{equation*}
for $\xi \in \Lambda TY \oplus \underline{\bbR}^k$, where $i(w)$ denotes insertion of $w \in TY \oplus \underline{\bbR}^k$. Reduction of the form degree modulo $2$ gives a $\bbZ_2$-grading on $\Sigma_Y$. If $Y$ is oriented, there is another possible choice of grading, given by splitting $\Sigma_Y$ into the eigenspaces of the Hodge-$*$-operator.
%\item On a $d$-dimensional K\"aehler manifold $X$ with complex structure $J_0$, we have a splitting $TX \otimes \bbC = \Lambda^{1,0} TX \oplus \Lambda^{0,1} T X$ into $\pm i$-eigenspaces of $J_0$.
\item Manifolds with a reduction of their structure group to one of the ten symmetry groups of Freed and Hopkins can also be turned into Clifford manifolds, c.f.\ \cite[\S9]{FreedHopkins}.
\item All examples above generalize to manifolds {\em over a target}, in the spirit of supersymmetric $\sigma$-models. After fixing a manifold $M$ and a Hermitian vector bundle $E$ with compatible connection over $M$ (the {\em target}), any $(d-k)$-dimensional manifold $Y$ with Clifford module $\Sigma_Y$ together with a map $\rho: Y \rightarrow M$ gives rise to a new (still $d$-dimensional) twisted Clifford module $\Sigma_{Y, \rho} \coloneqq \Sigma_Y \otimes \rho^*E$, where the Clifford map acts as the identity on the $\rho^*E$ factor. This gives rise to a {\em bundle} of Clifford manifolds over the mapping space $C^\infty(Y, M)$.
\end{enumerate}
\end{examples}

We observe that a $d$-dimensional manifold $X$ with $d$-dimensional Clifford module induces a $d$-dimensional Clifford module on its boundary $\partial X$ by restriction, using the identification
\begin{equation*}
  TX|_{\partial X} \cong T\partial X \oplus N\partial X \cong T\partial X \oplus \underline{\bbR}, \qquad v + a \nu \longmapsto v + a\mathbf{1}
\end{equation*}
for $v \in T\partial X$, $a \in \bbR$, where our convention is that we identify the {\em outward pointing} normal vector field $\nu \in N \partial X$ with the canonical section $\mathbf{1}$ of $\underline{\bbR}$. Hence the boundary of a $d$-dimensional Clifford manifold is naturally a $(d-1)$-dimensional Clifford manifold.

\begin{definition}[Duals and bordisms] \label{DefinitionDuals}
Fix a dimension $d$ for all Clifford modules. 
\begin{enumerate}[(i)]
\item If $Y$ is a $(d-1)$-dimensional Clifford manifold, its {\em dual} $Y^\vee$ has the same underlying manifold and spinor bundle, but the Clifford map altered by
\begin{equation*}
  \gamma_{Y^\vee}(w) \coloneqq \gamma_Y(\kappa w),
\end{equation*}
for $w \in TY\oplus \underline{\bbR}$, where $\kappa$ is the reflection at the canonical section $\mathbf{1} \in \underline{\bbR} \subseteq TY \oplus \underline{\bbR}$. 
\item Given two closed $(d-1)$-dimensional Clifford manifolds $Y_0$, $Y_1$, a {\em bordism} from $Y_1$ to $Y_0$ is a compact $d$-dimensional Clifford manifold $X$ together with an isomorphism $f: \partial X \rightarrow Y_0 \sqcup Y_1^\vee$ of Clifford manifolds.
\end{enumerate}
\end{definition}

\begin{figure}[h] 
\begin{center}
\includegraphics[scale=0.22, draft = false]{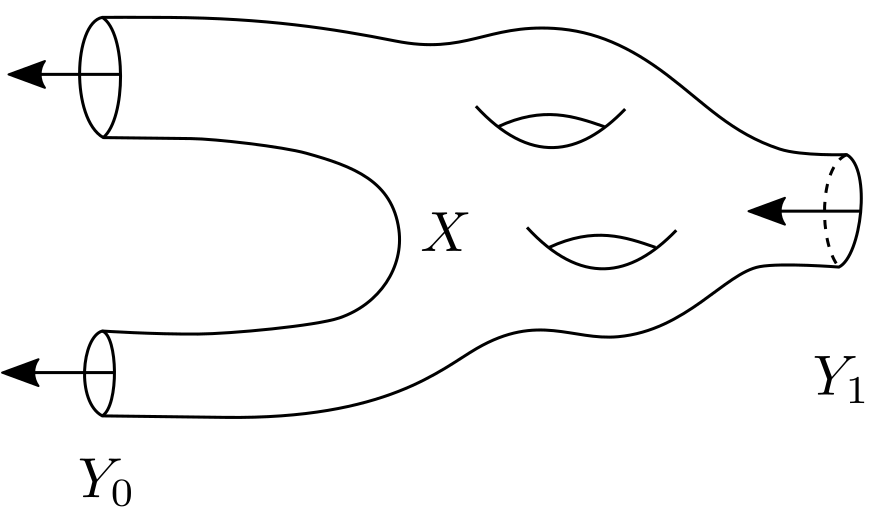}

\parbox{11cm}{\caption{Bordism between Clifford manifolds with arrows of time, given by normal vectors.}} \label{FigureBordism}
\end{center}
\end{figure}

\begin{example}[Cylinders]
If $Y$ is a $(d-1)$-dimensional manifold with $d$-dimensional Clifford module, this data induces a $d$-dimensional Clifford module on the Riemannian manifold $Y \times I$ (where $I \subset \bbR$ is any interval) called a {\em cylinder} over $Y$. Namely, we set $\Sigma_{Y \times I} = \Sigma_Y$, and after identifying
\begin{equation*}
  T(Y \times I) = TY \oplus TI = T Y \oplus \underline{\bbR},
\end{equation*}
we can set $\gamma_{Y\times I} = \gamma_Y$. If $I = [a, b]$, a closed interval, $Y\times I$ is a manifold with boundary, with two boundary components, $Y \times \{a\}$ and $Y \times \{b\}$. Since the Clifford module on the boundary is induced by identifying the {\em outward} normal vector with $\mathbf{1} \in \underline{\bbR}$, $Y \times \{a\}$ is canonically isomorphic to $Y$ as a Clifford manifold, while $Y \times \{b\}$ is canonically isomorphic to $Y^\vee$.
\end{example}

The following technical definition uses this notion of a cylinder over $(d-1)$-dimensional Clifford manifolds.

\begin{definition}[Product structure]
We say that the a $d$-dimensional Clifford manifold $X$  has {\em product structure near the boundary}, if there exists a neighborhood $U$ of the boundary $\partial X$ that for some $\varepsilon>0$ is isomorphic to $\partial X \times [0, \varepsilon)$ as a Clifford manifold. 
\end{definition}

%\begin{remark} \label{UniquenessOfCollar}
%Here of course, $U \subset X$ becomes a Clifford manifold by restriction of the data from $X$. Let $(f_0, F_0): (\partial X, \Sigma_{\partial X}) \rightarrow (Y, \Sigma_Y)$ be an isomorphism of Clifford manifolds. We claim that if $X$ has product structure near the boundary, then for sufficiently small neighborhoods $U$ of $\partial X$ in $X$, there exists a {\em unique} isomorphism $(f, F): (U, \Sigma_U)  \rightarrow (Y \times [0, \varepsilon), \Sigma_{Y \times [0, \varepsilon)})$ extending $(f_0, F_0)$. Indeed, if $\gamma_y(t)$ is the unique geodesic starting at $y \in \partial X$ with $\dot{\gamma}(0) = \nu$, then setting $f(\gamma_y(t)) = (f_0(y), t)$ gives a diffeomorphism $f: U \rightarrow Y \times [0, \varepsilon)$. Since the curves $t \mapsto (f_0(y), t)$ are geodesics in $\partial X \times [0, \varepsilon)$, any isometry $U \rightarrow Y \times [0, \varepsilon)$ must be given by $f$, if it exists.
%
%Similarly, if for $\varphi \in \Sigma_{\partial X}|_y$, $\Phi(t)$ is the parallel translation in $\Sigma_X$ of $\varphi$ along $\gamma_y(t)$, sending $F(\Phi(t)) = F_0(\varphi)$ provides a bundle map from $\Sigma_X$ to $\Sigma_{Y \times [0, \varepsilon)} = \Sigma_{Y}$. Since the constant curve $t \mapsto \varphi$ is parallel as a section of $\Sigma_{Y \times [0, \varepsilon)}$ over $Y \times [0, \varepsilon)$, any connection-preserving bundle isomorphism must be given by $F$, if it exists.
%\end{remark}

\begin{remark}[Sketch of the bordism category] \label{RemarkCliffordBordism}
Roughly, the $d$-dimensional bordism category $\mathrm{Bord}_{\langle d-1, d\rangle}^{\Cl}$ can be described as follows. Objects of the category are  $(d-1)$-dimensional Riemannian manifolds $Y$ with $d$-dimensional Clifford module. If $Y_0$, $Y_1$ are two such manifolds, a morphism from $Y_1$ to $Y_0$ is a Clifford bordism $X$ between them that has product structure near the boundary. Composition is given by gluing of bordisms, where the product structure near the boundary ensures that the metric and Clifford structure on the resulting bordism is again smooth.

One reason we leave this definition as a sketch is that $\mathrm{Bord}_{\langle d-1, d\rangle}^{\Cl}$ should actually be a {\em bicategory}, as bordisms may have automorphisms: isometries of Clifford manifolds. However, for our purposes, we do not need a precise definition of the bordism category, which we rather use as a guiding concept. Instead, we subsequently formulate our results in such a way that they are independent of the definition of the bordism category and can be used to rigorously define a functor, once a precise definition of the bordism category is established.
\end{remark}

\begin{remark}
If one defines the {\em dual of a bordism} $X$ between $Y_1$ and $Y_0$ as the obvious bordism $X^\vee$ between $Y_0^\vee$ and $Y_1^\vee$, the notion of dual thus defined should coincide with the category-theoretic notion of dual after rigorously constructing the bordism category; see e.g.\ \cite{PontoShulman}.
\end{remark}

We end this section by discussing an extra structure on the category $\mathrm{Bord}_{\langle d-1, d\rangle}^{\Cl}$ needed subsequently.

\begin{definition}[Conjugate]
  The {\em conjugate} of a $(d-k)$-dimensional Clifford manifold $Y$  is the Clifford manifold $\overline{Y}$ which has the same underlying manifold, spinor bundle and Clifford map, but the grading operator $J$ (c.f.\ \eqref{DefinitionJ}) is replaced by $-J$. In other words, the grading of $\Sigma_{\overline{Y}}$ is the opposite of that of $\Sigma_Y$, that is, $\Sigma_{\overline{Y}}^\pm = \Sigma_Y^\mp$. 
  \end{definition}

The operation sending $Y$ to $\overline{Y}$ and $X$ to $\overline{X}$ is an involution on the category $\mathrm{Bord}_{\langle d-1, d\rangle}^{\Cl}$. An important observation is now that for $(d-1)$-dimensional Clifford manifolds $Y$, there is a canonical isomorphism $(\mathrm{id}, \eta_Y): \overline{Y} \longrightarrow Y^\vee$ given by the identity on the underlying manifold and the bundle isomorphism
\begin{equation} \label{BundleIso}
  {\eta}_Y: \Sigma_{\overline{Y}} \longrightarrow \Sigma_{Y^\vee}, \qquad \psi \longmapsto \gamma_{\overline{Y}}(\mathbf{1})\psi,
\end{equation}
covering it.
Notice that $\Sigma_{\overline{Y}} = \Sigma_{Y^\vee} = \Sigma_Y$ as ungraded vector bundles and $\gamma_{\overline{Y}} = \gamma_{{Y}}$ so that this definition makes sense.  But  the grading of $\Sigma_{\overline{Y}}$ is the opposite of that of $\Sigma_{\overline{Y}}$, hence $ {\eta}_Y$ is grading preserving. That $ \eta_Y$ intertwines the Clifford maps of $\overline{Y}$ and $Y^\vee$ follows from the calculation
\begin{equation*}
 \eta_Y\bigl(\gamma_{\overline{Y}}(w)\psi\bigr) = \gamma_Y(\mathbf{1})\gamma_Y(w) \psi = \gamma_Y(\mathbf{1})\gamma_Y(w) \gamma_Y(\mathbf{1})^{-1}   \eta_Y(\psi) = \gamma_Y(\kappa w)  \eta_Y(\psi),
\end{equation*}
for $w \in TY \oplus \underline{\bbR}$ and $\psi \in \Sigma_{\overline{Y}} = \Sigma_Y$, where $\kappa$ is the reflection at $\mathbf{1} \in \underline{\bbR} \subseteq TY \oplus \underline{\bbR}$. 

If $X$ is a bordism from $Y_1$ to $Y_0$,  then $\overline{X}$ is a bordism from $\overline{Y}_1$ to $\overline{Y}_0$ , or equivalently, between $Y_1^\vee$ and $Y_0^\vee$ after using these identifications $\overline{Y}_i \cong Y_i^\vee$; this in turn is the same thing as a bordism from $Y_0$ to $Y_1$. In particular, if $X$ is a compact Clifford manifold with boundary $\partial X = Y$, seen as a bordism from $Y$ to $\emptyset$, we can form its {\em double} $X \sqcup \overline{X}$, a closed manifold,  provided $X$ has product structure near the boundary.

%%%%%%%%%%%%%%%%%%%%%%%%%%%%%%%%%%%%%%%%%%%%%%%%%%%%%%%%%%%%%%%%%

\subsection{Boundary Analysis of Dirac Operators} \label{SectionAnalysis}

Throughout, fix a dimension $d$ for all Clifford modules. If $X$ is a $d$-dimensional Riemannian manifold with ($d$-dimensional) Clifford module $\Sigma_X$ as introduced in \S\ref{SectionCliffordManifolds}, the Clifford map allows to define a canonical first order differential operator $D_X$ on $X$, the {\em Dirac operator}, given by
\begin{equation*}
  D_X \Phi = \sum_{j=1}^d \gamma(e_j) \nabla_{e_j}^\Sigma \Phi
\end{equation*}
for $\Phi \in C^\infty(X, \Sigma_X)$, with respect to a local orthonormal frame $e_1, \dots, e_d$. The fact that $\gamma$ is parallel implies that the Dirac operator is symmetric on functions with compact support in the interior of $X$. In general, if $X$ is a bordism of Clifford manifolds from $Y_1$ to $Y_0$, one has the {\em integration by parts formula}
\begin{equation} \label{IntegrationByParts}
  \langle D_X \Phi, \Psi\rangle_{L^2(X)} - \langle  \Phi, D_X\Psi\rangle_{L^2(X)} = \bigl\langle \varphi_0, \gamma(\nu) \psi_0\bigr\rangle_{L^2(Y_0)} - \bigl\langle \varphi_1, \gamma(\nu) \psi_1\bigr\rangle_{L^2(Y_1)},
\end{equation}
for $\Phi, \Psi \in C^\infty(X, \Sigma_X)$, where we wrote $\Phi|_{Y_i} = \varphi_i$, $\Psi|_{Y_i} = \psi_i$. Here the signs on the right hand side arise from our conventions regarding the normal vector.  It will be important throughout our considerations that $D_X$ satisfies the following {\em unique continuation property}:
\begin{equation} \label{UCP}
\begin{aligned}
  &\text{\itshape If $X$ is connected and a harmonic spinor $\Phi$ vanishes  }\\
  &\text{\itshape on some non-empty hypersurface $Y$ of $X$, then $\Phi \equiv 0$. }
\end{aligned}
\end{equation}
For references, see \S8 of \cite{BoosBavnbek}; c.f.\ also \cite{BaerNodal}.

\medskip

We denote by
\begin{equation} \label{DefHarmonicSpinors}
  \mathcal{H}_X \coloneqq \bigl\{ \Phi \mid D_X\Phi = 0 \bigr\} \subset C^\infty(X, \Sigma_X)
\end{equation}
 the space of {\em harmonic spinors} on $X$. We remark that if $X$ has a boundary, by a smooth spinor $\Phi$ on $X$, we mean one that is smooth {\em up to the boundary}, meaning, we require that whenever $X$ is embedded in larger open manifold $\tilde{X}$, $\Phi$ has a smooth extension to $\tilde{X}$. An important role in our considerations will be played by the space
 \begin{equation}\label{DefinitionLagrangian}
   L_X \coloneqq \bigl\{ \Phi|_{\partial X} \mid \Phi \in \mathcal{H}_X \bigr\} \subseteq C^\infty(\partial X, \Sigma_{\partial X}).
 \end{equation}
 A direct consequence of the unique continuation property \eqref{UCP} is the following.
 
 \begin{lemma} \label{LemmaKernel}
Let $X$ be a $d$-dimensional compact Clifford manifold. Then the kernel of the restriction map $\mathcal{H}_X \rightarrow L_X$ is $\mathcal{H}_{X^{\cl}}$, the space of harmonic spinors on $X^{\cl} \subseteq X$, the {\em closed part} of $X$, i.e.\ the union of all connected components that do not touch the boundary. 
\end{lemma}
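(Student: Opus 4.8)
The plan is to identify the kernel of $\mathcal{H}_X \to L_X$ with the harmonic spinors that vanish identically on the union $X^\partial$ of the connected components of $X$ that touch the boundary, and then to upgrade ``vanishing on $\partial X$'' to ``vanishing on all of $X^\partial$'' via unique continuation.

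First I would record the decomposition $X = X^{\cl} \sqcup X^\partial$ into the closed part and the boundary-touching part. Each summand is a union of connected components, hence open and closed in $X$, so a harmonic spinor on $X$ is the same as a pair of harmonic spinors on $X^{\cl}$ and on $X^\partial$, giving $\mathcal{H}_X \cong \mathcal{H}_{X^{\cl}} \oplus \mathcal{H}_{X^\partial}$; extension by zero embeds $\mathcal{H}_{X^{\cl}}$ as the subspace of $\mathcal{H}_X$ of spinors vanishing on $X^\partial$. Since $\partial X^{\cl} = \emptyset$ we have $\partial X = \partial X^\partial$ and $L_X = L_{X^\partial}$, and under the above splitting the restriction map $\Phi \mapsto \Phi|_{\partial X}$ is just $(\psi,\chi) \mapsto \chi|_{\partial X}$. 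Hence its kernel is $\mathcal{H}_{X^{\cl}} \oplus \ker\bigl(\mathcal{H}_{X^\partial} \to L_{X^\partial}\bigr)$, and everything reduces to showing that this last kernel is trivial.

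So let $\Phi \in \mathcal{H}_X$ with $\Phi|_{\partial X} = 0$ and let $C$ be any connected component of $X^\partial$; I claim $\Phi|_C \equiv 0$. The idea is to extend $\Phi|_C$ by zero across the boundary. Embed $X$ in an open Clifford manifold $\tilde X$, obtained by extending the metric, bundle, connection and Clifford map over a collar of $\partial X$ (which always exists); then $\partial C$ becomes a hypersurface in the interior of the connected component $\tilde C$ of $\tilde X$ containing $C$, and $\tilde C \setminus C$ is a non-empty open set. Let $\tilde\Phi$ equal $\Phi$ on $C$ and $0$ on $\tilde C \setminus C$. Using the integration-by-parts formula \eqref{IntegrationByParts} on $C$ together with $D_X\Phi = 0$ and $\Phi|_{\partial C} = 0$, the boundary term drops out and one gets $\langle \tilde\Phi, D_{\tilde C}\Psi\rangle_{L^2} = 0$ for every $\Psi \in C^\infty_c(\tilde C, \Sigma_{\tilde X})$; since $D$ is formally self-adjoint, this says $D_{\tilde C}\tilde\Phi = 0$ in the distributional sense. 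By ellipticity of the Dirac operator $\tilde\Phi$ is smooth, hence a genuine harmonic spinor on the connected manifold $\tilde C$ that vanishes on the non-empty open set $\tilde C \setminus C$, and in particular on a non-empty hypersurface contained in it. The unique continuation property \eqref{UCP} then forces $\tilde\Phi \equiv 0$ on $\tilde C$, so $\Phi|_C = 0$. As $C$ was an arbitrary component of $X^\partial$, $\Phi$ vanishes on $X^\partial$, i.e.\ $\Phi \in \mathcal{H}_{X^{\cl}}$; combined with the previous paragraph this gives $\ker(\mathcal{H}_X \to L_X) = \mathcal{H}_{X^{\cl}}$.

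I expect the only genuine subtlety to be the reduction to the interior version of unique continuation stated in \eqref{UCP}: namely checking that the zero-extension $\tilde\Phi$ is really a weak solution across $\partial C$, so that elliptic regularity applies. This is exactly where smoothness of $\Phi$ up to the boundary and the vanishing of the boundary term in \eqref{IntegrationByParts} are used; the rest is bookkeeping with connected components.
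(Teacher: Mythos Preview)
Your argument is correct and is essentially the intended one: the paper does not spell out a proof but simply declares the lemma ``a direct consequence of the unique continuation property \eqref{UCP}''. Your write-up supplies precisely the missing steps---splitting off the closed components, extending by zero across a collar, and invoking elliptic regularity so that the interior-hypersurface version of \eqref{UCP} applies on the enlarged connected manifold.
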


%\begin{proof}
%The unique continuation property \eqref{UCP} implies that if $\Phi$ is a harmonic spinor on $X$ such that $\Phi|_Y = 0$ for some  hypersurface $Y$, then $\Phi$ vanishes on all connected components touching $Y$.
%\end{proof}

The following result on the spaces $L_X$ defined in \eqref{DefinitionLagrangian} is fundamental to our observations; a proof can be found e.g.\ in \cite[\S XVII, Lemma~B]{Palais}; c.f.\ also \cite[\S12]{BoosBavnbek}.

\begin{theorem} \label{ThmCobordism}
Let $X$ be a $d$-dimensional closed Clifford manifold and assume that the Dirac operator is invertible. Let $Y\subset X$ be a closed hypersurface that divides $X$ into two parts, $X_0$ and $X_1$. Then we have
\begin{equation*}
  L_{X_0} \cap L_{X_1} = \{0\} \qquad \text{and} \qquad C^\infty(Y, \Sigma_Y) = L_{X_0} + L_{X_1}.
\end{equation*}
Moreover, the orthogonal projection $P_0$ onto $L_{X_0}$ along $L_{X_1}$ is a pseudodifferential operator of order zero, with principal symbol $p_0(\xi)$, $\xi \in T^\prime Y$, the orthogonal projection onto the $i \vert \xi\vert$ eigenspace of the endomorphism $\gamma(\nu)\gamma(\xi)$ of $\Sigma_Y$. Here $\nu$ is the normal vector to $Y$ pointing into $X_0$. Consequently, the principal symbol $p_1(\xi)$ of $P_1 = \mathrm{id}-P_0$ is the projection onto the $-i \vert \xi\vert$ eigenspace of $\gamma(\nu)\gamma(\xi)$.
\end{theorem}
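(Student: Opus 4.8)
The plan is to derive all four assertions from the behaviour of harmonic spinors across $Y$ together with the single-layer potential of $D_X$ on the closed manifold $X$. Since $D_X$ is invertible, $G := D_X^{-1}$ is a classical pseudodifferential operator of order $-1$ on $X$, and I would use two standard facts --- exactly the content of the references cited before the theorem (Calder\'on, Seeley), which I would quote rather than reprove: \emph{(a)} the \emph{transmission property} of $G$ along $Y$, i.e.\ that $G$ applied to a layer distribution supported on $Y$ yields a section smooth up to $Y$ from either side; and \emph{(b)} that the resulting single-layer trace operator on $Y$ is pseudodifferential of order $0$. I also use the distributional form of the integration-by-parts formula \eqref{IntegrationByParts}: if a section $\Phi$ of $\Sigma_X$ is smooth on $X\setminus Y$ with smooth boundary traces $\psi_0$ from $X_0$ and $\psi_1$ from $X_1$, then, with $\nu$ the normal pointing into $X_0$, the distribution $D_X\Phi$ equals its pointwise value on $X\setminus Y$ plus the layer term $\gamma(\nu)(\psi_0-\psi_1)\,\delta_Y$.

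For transversality I would argue by gluing: let $\varphi\in L_{X_0}\cap L_{X_1}$ and choose $\Phi_i\in\mathcal{H}_{X_i}$ with $\Phi_i|_Y=\varphi$. Gluing yields a section $\Phi$ of $\Sigma_X$ smooth on $X\setminus Y$ and continuous across $Y$. By the formula above, $D_X\Phi$ has zero pointwise part (harmonicity on each side) and zero layer term (the jump $\psi_0-\psi_1$ vanishes), so $D_X\Phi=0$ as a distribution; elliptic regularity gives $\Phi\in\mathcal{H}_X$, and invertibility of $D_X$ forces $\Phi=0$, hence $\varphi=0$.

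For the spanning statement, given $\varphi\in C^\infty(Y,\Sigma_Y)$ I would form the single-layer potential $\Psi_\varphi:=G(\gamma(\nu)\varphi\,\delta_Y)$. Then $D_X\Psi_\varphi=\gamma(\nu)\varphi\,\delta_Y$ vanishes on $X\setminus Y$, so $\Psi_\varphi$ is harmonic there; by \emph{(a)} it restricts to $\Phi_\varphi^i\in\mathcal{H}_{X_i}$, and with $\psi_i:=\Phi_\varphi^i|_Y$ the formula above, applied to $\Psi_\varphi$, gives $\psi_0-\psi_1=\varphi$. As $\psi_0\in L_{X_0}$ and $-\psi_1\in L_{X_1}$, this proves $C^\infty(Y,\Sigma_Y)=L_{X_0}+L_{X_1}$, and with transversality the sum is direct. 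By uniqueness of this decomposition the $L_{X_0}$-component of $\varphi$ is $\psi_0$, i.e.\ $P_0$ is precisely the operator $\varphi\mapsto\psi_0$ sending $\varphi$ to the $X_0$-trace of its single-layer potential; by \emph{(b)} this is pseudodifferential of order $0$ (a Calder\'on projector for $X_0$), and $P_1=\mathrm{id}-P_0$ is the $X_1$-trace operator.

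For the principal symbol: for $\xi\in T^\prime Y$ the relations \eqref{OtherCliffordRelations} give $\gamma(\nu)^2=-\mathrm{id}$ and, since $\xi\perp\nu$, $(\gamma(\nu)\gamma(\xi))^2=-\gamma(\nu)^2\gamma(\xi)^2=-|\xi|^2\,\mathrm{id}$, while $\gamma(\nu)\gamma(\xi)$ is skew-Hermitian with vanishing trace; hence over $T^\prime Y\setminus 0$ the bundle $\Sigma_Y$ splits orthogonally into the $\pm i|\xi|$-eigenbundles of $\gamma(\nu)\gamma(\xi)$, of equal rank. Freezing coefficients and writing $D_X=\gamma(\nu)(\partial_t+B)$ in a normal coordinate $t$ pointing into $X_0$, one finds $\sigma(B)(\xi)=-i\,\gamma(\nu)\gamma(\xi)$, whose positive eigenspace --- the boundary data of solutions of the frozen half-line problem that decay into $X_0$ --- is the $+i|\xi|$-eigenspace of $\gamma(\nu)\gamma(\xi)$. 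The symbol calculus of the Poisson operator (a Fourier--Laplace/residue computation in the normal variable) then identifies the principal symbol $p_0(\xi)$ of $P_0$ with the orthogonal projection onto this eigenspace, so that $p_1(\xi)=\mathrm{id}-p_0(\xi)$ projects onto the $-i|\xi|$-eigenspace. The genuinely hard step is the analytic input invoked as a black box above --- the transmission property of $D_X^{-1}$ and the pseudodifferential structure and symbol of the single-layer trace operator; the gluing argument, the jump relation, and the Clifford-algebra eigenvalue computation are all soft.
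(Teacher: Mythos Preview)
The paper does not actually supply its own proof of this theorem: it is stated as a known result, with the line ``a proof can be found e.g.\ in \cite[\S XVII, Lemma~B]{Palais}; c.f.\ also \cite[\S12]{BoosBavnbek}'' immediately preceding the statement. So there is no in-paper argument to compare against; the theorem functions as an analytic black box that the subsequent results (Thm.~\ref{TheoremLagrangian}, Cor.~\ref{CorCobordism}, Lemma~\ref{LemmaAssumptions}) build upon.

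Your proposal is precisely a sketch of the classical Calder\'on projector construction that those references contain: the gluing/unique-continuation argument for $L_{X_0}\cap L_{X_1}=\{0\}$, the single-layer potential $G(\gamma(\nu)\varphi\,\delta_Y)$ with its jump relation for the spanning statement, and the frozen-coefficient half-line analysis for the principal symbol. This is the standard route and is correct in outline. Two small comments: first, the sign in your jump term depends on whether $\gamma(\nu)$ is taken Hermitian or skew-Hermitian in the fibre metric, so you should check it against the paper's conventions once you fix them (it does not affect the argument structurally, only which of $\psi_0-\psi_1$ or $\psi_1-\psi_0$ equals $\varphi$). Second, you are right to flag the transmission property and the order-zero pseudodifferential nature of the trace-of-single-layer operator as the genuine analytic content being imported; these are exactly what the cited sources supply, and your honesty about invoking them as black boxes matches the paper's own stance.
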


We use this to prove the following result.

\begin{theorem} \label{TheoremLagrangian}
Let $X$ be a $d$-dimensional compact Clifford manifold and assume that $X$ has product structure near the boundary. Then 
\begin{equation*}
L_X^\perp =  \gamma(\nu)L_X, 
\end{equation*}
where the orthogonal complement is taken in $C^\infty(\partial X, \Sigma_{\partial X})$ with respect to the $L^2$-scalar product. 
\end{theorem}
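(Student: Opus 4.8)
The plan is to establish the claim first in the model case of a cylinder, where the boundary value space $L_X$ can be computed explicitly via the Cauchy data of harmonic spinors, and then to reduce the general case to this one. So the first step is to fix a collar $\partial X \times [0,\varepsilon)$ realizing the product structure, and to analyze the equation $D_X \Phi = 0$ on this collar. Writing $D_X = \gamma(\nu)\bigl(\partial_t + A\bigr)$ on the collar for the induced (self-adjoint, first-order elliptic) boundary Dirac operator $A$ on $\partial X$ (this is the standard normal-form decomposition, valid precisely because of the product structure), harmonic spinors on the collar are exactly solutions of $\partial_t \Phi = -A\Phi$, i.e.\ $\Phi(t) = e^{-tA}\Phi(0)$. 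The point of the argument is not to solve this globally but to exploit the \emph{symmetry} it provides between $L_X$ and $\gamma(\nu)L_X$.

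The cleanest route is via the integration-by-parts formula \eqref{IntegrationByParts}: for $\Phi,\Psi \in \mathcal{H}_X$ we have $0 = \langle D_X\Phi,\Psi\rangle_{L^2(X)} - \langle \Phi, D_X\Psi\rangle_{L^2(X)} = \langle \varphi, \gamma(\nu)\psi\rangle_{L^2(\partial X)}$ where $\varphi = \Phi|_{\partial X}$, $\psi = \Psi|_{\partial X}$. This shows immediately that $L_X \perp \gamma(\nu) L_X$, i.e.\ the inclusion $\gamma(\nu)L_X \subseteq L_X^\perp$. (Here one uses that $\gamma(\nu)$ is skew-adjoint, being Clifford multiplication by a unit vector, together with the fact that $\langle \varphi, \gamma(\nu)\psi\rangle$ is symmetric in $\Phi,\Psi$ up to the sign supplied by $\gamma(\nu)^* = -\gamma(\nu)$, which forces the pairing to vanish; this elementary sign-chase is the ``routine'' part.) So half the theorem is essentially formal.

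The substantive half is the reverse inclusion $L_X^\perp \subseteq \gamma(\nu)L_X$, equivalently $L_X + \gamma(\nu)L_X = C^\infty(\partial X, \Sigma_{\partial X})$. Here I would invoke Theorem~\ref{ThmCobordism}: attach to $X$ a collar and then a mirror copy, or more directly embed $X$ as one half $X_0$ of a closed Clifford manifold $\widehat{X} = X \cup_{\partial X} X'$ with invertible Dirac operator, where $X' = \overline{X}$ (the conjugate, or double) glued along $Y = \partial X$ — the product structure near the boundary is exactly what makes this gluing a smooth Clifford manifold, and one can arrange invertibility of $D_{\widehat X}$ after a small perturbation or by a dimension/deformation argument. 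Theorem~\ref{ThmCobordism} then gives $C^\infty(Y,\Sigma_Y) = L_{X_0} + L_{X_1} = L_X + L_{X'}$. It therefore suffices to identify $L_{X'}$ with $\gamma(\nu)L_X$: a harmonic spinor on the mirror copy is, after the reflection identification of the collars, obtained from one on $X$ by the map $\Phi \mapsto \gamma(\nu)\Phi$ (this is where the normal-form $D = \gamma(\nu)(\partial_t + A)$ and the observation that reflecting $t \mapsto -t$ conjugates $\partial_t + A$ to $-(\partial_t + A)$, combined with $\gamma(\nu)A = -A\gamma(\nu)$, shows $\gamma(\nu)$ intertwines harmonic spinors on the two sides). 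Hence $L_{X'} = \gamma(\nu)L_X$ and we are done.

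The main obstacle I anticipate is the construction and bookkeeping around the doubled manifold: one must verify that $D_{\widehat X}$ can be taken invertible (closed manifolds need not have invertible Dirac operators, so this may require a perturbation of the metric or connection away from the collar, or passing to $X \sqcup (\text{cylinder}) \sqcup \overline X$ and using that invertibility is generic), and one must pin down the precise identification $L_{X'} = \gamma(\nu)L_X$ including signs — here the fact that the outward normals of $X$ and of $X'$ at $Y$ are opposite interacts with the sign conventions in Definition~\ref{DefinitionDuals} and in the integration-by-parts formula, so some care is needed to make sure the $\gamma(\nu)$ that appears is literally the one in the theorem statement and not $-\gamma(\nu)$ or $\gamma(\nu)$ on the dual side. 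Once those identifications are nailed down the theorem follows by combining the two inclusions.
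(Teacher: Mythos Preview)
Your overall strategy matches the paper's: show the easy inclusion $\gamma(\nu)L_X \subseteq L_X^\perp$ by integration by parts, form the double $M = X \sqcup_{\partial X} \overline{X}$, invoke Theorem~\ref{ThmCobordism} to get $C^\infty(\partial X,\Sigma_{\partial X}) = L_X + L_{\overline{X}}$, and identify $L_{\overline{X}} = \gamma(\nu)L_X$ via the reflection. The paper does exactly this.

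The gap is in the step you flag as the ``main obstacle'': invertibility of $D_M$ on the double. You propose to obtain it by perturbing the metric or connection away from the collar, or by a genericity argument. But any such perturbation destroys the reflection symmetry $(f,F):M \to \overline{M}$ that you need in the very next step to conclude $L_{X'} = \gamma(\nu)L_X$. If you perturb only on the $\overline{X}$ side, you keep $L_{X_0} = L_X$ but lose the identification of $L_{X_1}$ with $\gamma(\nu)L_X$; if you perturb symmetrically, you change $L_X$ itself. A limiting argument is conceivable but delicate, since $L_X$ is not obviously stable under such limits.

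The point you are missing is that no perturbation is needed: the double $M = X \sqcup_{\partial X}\overline{X}$ \emph{automatically} has $\mathcal{H}_M = 0$. The argument (which the paper attributes to Boo{\ss}-Bavnbek) uses the reflection once more. If $\Phi \in \mathcal{H}_M$, let $\tilde\Phi = F^*\Phi$ be its reflection, so $\tilde\Phi|_{\partial X} = \gamma(\nu)\varphi$ where $\varphi = \Phi|_{\partial X}$. Integration by parts on $X$ gives
\[
0 = \langle D_X\Phi,\tilde\Phi\rangle_{L^2(X)} - \langle\Phi,D_X\tilde\Phi\rangle_{L^2(X)} = \langle\varphi,\gamma(\nu)^2\varphi\rangle_{L^2(\partial X)} = -\|\varphi\|^2,
\]
so $\varphi = 0$, and unique continuation forces $\Phi = 0$. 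With this in hand, Theorem~\ref{ThmCobordism} applies directly to the undeformed double and the rest of your argument goes through.
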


\begin{proof}
Since the kernel of the restriction map $\mathcal{H}_X \rightarrow L_X$ is the space of harmonic spinors on the closed part of $X$, c.f.\ Lemma \ref{LemmaKernel}, we assume without loss of generality that $X$ has no closed components.

 Let $M = {X} \sqcup_{\partial X}  \overline{X}$ be the closed Clifford manifold obtained from gluing $X$ together with its conjugate along the common boundary, as explained at the end of \S\ref{SectionCliffordManifolds}.
The obvious isometry $f: M \rightarrow M$ that exchanges $X$ and $\overline{X}$ and fixes the hypersurface $\partial X \subset M$ is covered by a bundle isomorphism $F : \Sigma_M \rightarrow \Sigma_M$ that preserves the Clifford map $\gamma_M$ but is grading reversing, given by the canonical identification $\Sigma_{\overline{X}} = \Sigma_X$ (in other words, $(f, F)$ is an isomorphism of Clifford manifolds from $M$ to $\overline{M}$). On $\partial X$, we have $F|_{\partial X} = \eta_{\partial X}$, the bundle isomorphism used for gluing $X$ to $\overline{X}$. By \eqref{BundleIso}, we therefore have $F|_{\partial X} = \gamma(\nu)$, multiplication by the normal vector $\nu$ that points out of $X$.

After these preparations, we first claim that $\mathcal{H}_M = \{0\}$. Indeed, suppose that $\Phi \in \mathcal{H}_M$, and let $\tilde{\Phi} = F^*\Phi \in \mathcal{H}_M$ the harmonic spinor obtained by reflecting $\Phi$ at $\partial M$, and let $\varphi= \Phi|_{\partial X}$, so that $\tilde{\Phi}|_{\partial X} = \gamma(\nu)\varphi$. Then
\begin{equation*}
  0 = \langle D_X \Phi, \tilde{\Phi}\rangle_{L^2(X)} - \langle  \Phi, D_X\tilde{\Phi}\rangle_{L^2(X)} = \langle \varphi, \gamma(\nu)\tilde{\varphi}\rangle_{L^2(\partial X)} = - \|\varphi\|_{L^2(\partial X)}^2,
\end{equation*}
hence $\varphi = 0$. By the unique continuation property \eqref{UCP}, this implies that also $\Phi = 0$ (as far as we know, this argument is due to \cite[Prop~9.3]{BoosBavnbek}, c.f.\ also \cite{BoosBavnbekLesch}).

Now using Thm.~\ref{ThmCobordism} with $X_0 = X$, $X_1 = \overline{X}$ and $Y = \partial X$, we obtain that $C^\infty(\partial X, \Sigma_{\partial X})$ is the direct sum of $L_{X}$ and $L_{\overline{X}}$. Let $\varphi \in L_X$ and choose $\Phi \in C^\infty(M, \Sigma_M)$ with $D_M \Phi = 0$ on $X$ and $\Phi|_{\partial X} = \varphi$. Then the reflection $\tilde{\Phi} := F^*\Phi$ of $\Phi$ satisfies $D_M \tilde{\Phi}= 0$ on $\overline{X}$ and $\tilde{\Phi}|_{\partial X} = \gamma(\nu)\varphi$, hence $\gamma(\nu)\varphi \in L_{\overline{X}}$; in other words $\gamma(\nu)L_X \subseteq L_{\overline{X}}$. Reversing this argument shows that $L_{\overline{X}} = \gamma(\nu)L_X$.

It is left to show that $\gamma(\nu)L_X \subseteq L_X^\perp$. To see this, let $\varphi, \psi \in L_X$ and choose $\Phi, \Psi \in C^\infty(M, \Sigma_M)$ with $D_M \Phi = D_M \Psi = 0$ on $X$ and $\Phi|_{\partial X} = \varphi$ and $\Psi|_{\partial X} = \psi$. Then
\begin{equation*}
  0 = \langle D_M \Phi, \Psi\rangle_{L^2(X)} - \langle  \Phi, D_M \Psi\rangle_{L^2(X)} = \bigl\langle \varphi, \gamma(\nu)\psi\bigr\rangle_{L^2(\partial X)},
\end{equation*}
which finishes the proof.
\end{proof}

This allows to generalize Thm.~\ref{ThmCobordism} as follows.

\begin{corollary} \label{CorCobordism}
If in Thm.~\ref{ThmCobordism}, we drop the assumption of invertibility of $D_X$, we have
\begin{equation*}
  L_{X_0} \cap L_{X_1} = K, \qquad \text{and} \qquad  L_{X_0} + L_{X_1}= \bigl(\gamma(\nu) K\bigr)^\perp,
\end{equation*}
 where $K \coloneqq \bigl\{ \Phi|_Y  \mid \Phi \in \mathcal{H}_X \bigr\}$.
\end{corollary}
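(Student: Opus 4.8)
The plan is to establish the two identities separately, reducing the one for the sum to the one for the intersection together with Theorem~\ref{TheoremLagrangian} and a closedness statement. Write $\nu$ for the unit normal of $Y$ pointing into $X_0$, and, as in the proof of Theorem~\ref{TheoremLagrangian}, assume (by Lemma~\ref{LemmaKernel}) that neither $X_0$ nor $X_1$ has a connected component disjoint from $Y$; such components affect none of $L_{X_0}$, $L_{X_1}$, $K$. Two preliminary facts will be used. First, since $X_0$ and $X_1$ inherit product structure near their common boundary $Y$, the doubles $X_0 \cup_Y \overline{X_0}$ and $X_1 \cup_Y \overline{X_1}$ are closed Clifford manifolds with \emph{invertible} Dirac operator — this is exactly the vanishing $\mathcal{H}_M = \{0\}$ shown in the proof of Theorem~\ref{TheoremLagrangian} — so Theorem~\ref{ThmCobordism} applies to each. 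It yields, besides the splittings $C^\infty(Y, \Sigma_Y) = L_{X_i} \oplus L_{X_i}^\perp$ with $L_{X_i}^\perp = \gamma(\nu) L_{X_i}$ (i.e.\ Theorem~\ref{TheoremLagrangian} itself), that the \emph{orthogonal} projection $\Pi_i$ onto $L_{X_i}$ is a pseudodifferential operator of order $0$ whose principal symbol at $\xi \in T^\prime Y$ is the spectral projection of $\gamma(\nu)\gamma(\xi)$ onto its $+i|\xi|$-eigenspace if $\nu$ points into $X_i$, and onto its $-i|\xi|$-eigenspace otherwise. Second, $K = \{\Phi|_Y \mid \Phi \in \mathcal{H}_X\}$ is finite-dimensional, since $\mathcal{H}_X$ is, $X$ being closed.

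For the intersection identity, the inclusion $K \subseteq L_{X_0} \cap L_{X_1}$ is immediate, a harmonic spinor on $X$ restricting to a harmonic spinor on each half. For the converse, given $\varphi \in L_{X_0} \cap L_{X_1}$ choose harmonic spinors $\Phi_i$ on $X_i$ with $\Phi_i|_Y = \varphi$ and glue them to a function $\Phi$ on $X$; since the two boundary values agree, $\Phi$ has no jump across $Y$, so the distribution $D_X\Phi$ carries no term supported on $Y$ and therefore vanishes on all of $X$. By elliptic regularity $\Phi$ is smooth, hence $\Phi \in \mathcal{H}_X$ and $\varphi = \Phi|_Y \in K$. (One can moreover show, by reflecting a harmonic spinor across $Y$ inside the two doubles and regluing, that $\gamma(\nu)K = K$, but this will not be needed.)

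For the sum identity, the inclusion $L_{X_0} + L_{X_1} \subseteq (\gamma(\nu)K)^\perp$ comes straight from Theorem~\ref{TheoremLagrangian}: for $k \in K \subseteq L_{X_0} \cap L_{X_1}$ we have $\gamma(\nu)k \in \gamma(\nu)L_{X_0} = L_{X_0}^\perp$ and likewise $\gamma(\nu)k \in L_{X_1}^\perp$, so $\gamma(\nu)k$ is orthogonal to $L_{X_0}$ and to $L_{X_1}$. For the reverse inclusion I would first compute the orthogonal complement, using that complements turn sums into intersections, then Theorem~\ref{TheoremLagrangian} for $X_0$ and $X_1$, then that $\gamma(\nu)$ is a bijection of $C^\infty(Y,\Sigma_Y)$ (as $\gamma(\nu)^2 = -\mathrm{id}$):
\begin{equation*}
(L_{X_0} + L_{X_1})^\perp = L_{X_0}^\perp \cap L_{X_1}^\perp = \gamma(\nu)L_{X_0} \cap \gamma(\nu)L_{X_1} = \gamma(\nu)\bigl(L_{X_0} \cap L_{X_1}\bigr) = \gamma(\nu)K .
\end{equation*}
Since $\gamma(\nu)K$ is finite-dimensional it now suffices to know that $L_{X_0}+L_{X_1}$ equals its double orthogonal complement in $C^\infty(Y,\Sigma_Y)$, for then $L_{X_0}+L_{X_1} = (L_{X_0}+L_{X_1})^{\perp\perp} = (\gamma(\nu)K)^\perp$.

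This last closedness is the main obstacle, being not formal in infinite dimensions. I would argue as follows. Since $\Pi_1$ is built from the normal pointing into $X_1$, namely $-\nu$, its principal symbol is the spectral projection of $\gamma(\nu)\gamma(\xi)$ complementary to that of $\Pi_0$; hence $\sigma(\Pi_0)+\sigma(\Pi_1) = \mathrm{id}$ and $R \coloneqq \Pi_0 + \Pi_1 - \mathrm{id}$ is a pseudodifferential operator of order $\leq -1$ on the closed manifold $Y$, hence compact on $L^2(Y,\Sigma_Y)$. For $\ell$ in the $L^2$-closure of $L_{X_0}$ one then computes $(\mathrm{id}-\Pi_1)\ell = (\mathrm{id}-R)\ell$, so $P_{L_{X_1}^\perp}$ restricted to that closed subspace equals the Fredholm operator $\mathrm{id}-R$ restricted to it, and hence has closed range; by Theorem~2.1 of \cite{ClosedSubspaces} (the statement \eqref{Theorem2.1}, applied to the closed subspaces $L_{X_1}$ and $L_{X_0}$), the sum of the $L^2$-closures of $L_{X_0}$ and $L_{X_1}$ is closed. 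In the language of Remark~\ref{RemarkCategory} this just says that $L_{X_0}$ and $L_{X_1}^\perp$ are close, so that $L_{X_0}$ and $L_{X_1}$ form a Fredholm pair. It remains to descend from $L^2$ to $C^\infty(Y,\Sigma_Y)$: since $\Pi_0$, $\Pi_1$ are pseudodifferential they preserve $C^\infty$, so the transmission problem $\varphi_0|_Y + \varphi_1|_Y = g$ with $D_{X_i}\varphi_i = 0$ has a smooth solution whenever it has an $L^2$ one; this gives both that $L_{X_0}+L_{X_1}$ is closed in the required sense and, as a by-product, that $\overline{L_{X_0}}\cap\overline{L_{X_1}} = K$. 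The bookkeeping of this $C^\infty$-versus-$L^2$ passage is the one genuinely fiddly point I foresee.
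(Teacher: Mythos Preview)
Your proposal is correct and follows essentially the same route as the paper. A few minor differences are worth noting: you supply the gluing/elliptic-regularity argument for $L_{X_0}\cap L_{X_1}\subseteq K$ explicitly, which the paper treats as implicit (the same argument reappears in Lemma~\ref{LemmaCompositionLagrangian}); for the $L^2$-closedness you use that $\Pi_0+\Pi_1-\mathrm{id}$ has order $\leq -1$, while the paper instead checks that $P_1^\perp P_0 + P_1 P_0^\perp$ has principal symbol the identity---these are equivalent symbol computations leading to the same Fredholm conclusion. The one place where the paper is more concrete than your sketch is the $C^\infty$-versus-$L^2$ passage you flag as ``fiddly'': rather than appealing to $\Pi_i$ preserving smoothness (which does not by itself give a smooth \emph{splitting} $\varphi=\varphi_0+\varphi_1$), the paper sets up the coupled system $\varphi_0 = P_0P_1\varphi_1 - P_0\varphi$, $\varphi_1 = P_1P_0\varphi_0 - P_1\varphi$ and bootstraps, using that $P_0P_1$ and $P_1P_0$ have vanishing principal symbol and hence gain one derivative.
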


\begin{proof}
%We first show that $L_{X_0} \cap L_{X_1} = K $. The inclusion $K \subseteq L_{X_0} \cap L_{X_1}$ is trivial as harmonic spinors on $X$ restrict to harmonic spinors on $X_{0}$ respectively $X_1$. The converse follows follows from the integration by parts formula \eqref{IntegrationByParts} and elliptic regularity: Let $\Phi_i \in \mathcal{H}_{X_{i}}$ with $\Phi_0|_Y = \Phi_1|_Y = \varphi$. Let $\Phi$ be the continuous spinor on $X$ given by $\Phi_i$ on $X_i$. Then for any $\Psi \in C^\infty(X, \Sigma_X)$, integration by parts yields
%\begin{equation*}
%\begin{aligned}
%  \langle \Phi, D_X\Psi\rangle_{L^2(X)} &= \langle\Phi_0, D_X\Psi\rangle_{L^2(X_0)} + \langle\Phi_1, D_X\Psi\rangle_{L^2(X_1)}\\
%  &= -\langle \varphi, \gamma(\nu)\Psi|_Y\rangle_{L^2(Y)} + \langle  \varphi,  \gamma(\nu) \Psi|_Y\rangle_{L^2(Y)} = 0.
%\end{aligned}
%\end{equation*}
%Hence $D_X\Phi= 0$ in the distributional sense. By elliptic regularity, $\Phi$ is smooth, hence we have $\Phi|_Y \in K$. This shows $L_{X_0} \cap L_{X_1} \subseteq K$. 
Let now $\varphi \perp L_{X_0} +L_{X_1}$. 
Then $\varphi \perp L_{X_0}$, $\varphi \perp L_{X_1}$, hence by Thm.~\ref{TheoremLagrangian}, $\varphi \in \gamma(\nu)L_{X_0}$ and $\varphi \in \gamma(\nu) L_{X_1}$; in other words, $\varphi \in \gamma(\nu)K$. Hence
\begin{equation*}
  K = \gamma(\nu)\bigl(L_{X_0}+L_{X_1}\bigr)^\perp = \gamma(\nu)\bigl(\gamma(\nu) L_{X_0} \cap \gamma(\nu) L_{X_1}\bigr) = L_{X_0} \cap L_{X_1}.
\end{equation*}
It remains to show that $L_{X_0} + L_{X_1}$ is closed. 
%Let $P$ be the orthogonal projection onto (the closure of) $L_{X_0} + L_{X_1}$ and let $P_i$ be the orthogonal projections onto $L_{X_i}$, $i=0,1$. Using the identities $PP_i = P_iP = P_i$,  $PP_i^\perp = P_i^\perp P = P-P_i$, we get
%\begin{equation} \label{Comparison}
%P-(P_0+P_1) = (P_1+P_1^\perp)(P-P_0-P_1)(P_0+P_0^\perp) = P(P_1^\perp P_0^\perp-P_1P_0)
%\end{equation}
%By Thm.~\ref{ThmCobordism} and Thm.~\ref{TheoremLagrangian}, each of the operators $P_i$, $P_i^\perp$ is a pseudodifferential operator of order zero. Moreover, the principal symbols $p_0(\xi)$, $p_1(\xi)$, $\xi \in TY$, of $P_0$ and $P_1$ are the projections onto the $+i$, respectively $-i$ eigenspaces of the endomorphism $\gamma(\nu)\gamma(\xi)$ of $\Sigma_Y$. In other words, we have 
%\begin{equation*}
%p_0(\xi) = 1-p_1(\xi) = p_1^\perp(\xi), \qquad p_1(\xi) = 1-p_0(\xi) = p_0^\perp(\xi),
%\end{equation*}
% where $p_i^\perp(\xi)$ denotes the principal symbol of $P_i^\perp$. Therefore, the principal symbol of $P_1^\perp P_0^\perp-P_1P_0$ at $\xi \in TY$ is
%\begin{equation*}
%p_1^\perp(\xi)p_0^\perp(\xi) - p_1(\xi)p_0(\xi) = p_0(\xi)\bigl(1-p_0(\xi)\bigr) - \bigl(1-p_0(\xi)\bigr)p_0(\xi) = 0.
%\end{equation*}
%Hence $P_1^\perp P_0^\perp-P_1P_0$ has order $-1$. Now \eqref{Comparison} shows that $P-(P_0+P_1)$ is compact and maps 
To this end, we first prove that $L_{X_0}^c+L_{X_1}^c$ is closed, where $L_{X_i}^c$ is the closure of $L_{X_i}$ in $L^2(Y, \Sigma_Y)$. By \eqref{Theorem2.1}, it suffices to show that the space $P_{0}^\perp L_{X_1}^c$ or, equivalently, the space $P_{1}^\perp L_{X_0}^c$ is closed, where $P_{i}$ is the orthogonal projection onto $L_{X_i}^c$ and $P_i^\perp = 1-P_i$. To this end, it suffices to show that the operator $P_1^\perp P_0 + P_1 P_0^\perp$ has closed range. Now by Thm.~\ref{ThmCobordism} and Thm.~\ref{TheoremLagrangian}, each of the operators $P_i$, $P_i^\perp$ is a pseudodifferential operator of order zero. Moreover, the principal symbols $p_0(\xi)$, $p_1(\xi)$, $\xi \in TY$, of $P_0$ and $P_1$ are the projections onto the $+i\vert \xi\vert$, respectively $-i\vert \xi \vert$ eigenspaces of the endomorphism $\gamma(\nu)\gamma(\xi)$ of $\Sigma_Y$. In other words, we have $p_0(\xi) = 1-p_1(\xi) = p_1^\perp(\xi)$ and $p_1(\xi) = 1-p_0(\xi) = p_0^\perp(\xi)$, where $p_i^\perp(\xi)$ denotes the principal symbol of $P_i^\perp$. The principal symbol of the operator $P_1^\perp P_0 + P_1 P_0^\perp$ at $\xi \in TY$ is therefore
\begin{equation*}
  p_1^\perp(\xi)p_0(\xi) + p_1(\xi)p_0^\perp(\xi) = p_0(\xi)^2 + \bigl(1-p_0(\xi)\bigr)^2 = 1.
\end{equation*}
Hence $P_1^\perp P_0 + P_1 P_0^\perp = \mathrm{id} + S$, where $S$ is a pseudodifferential operator of order $-1$, hence compact. This implies that $P_1^\perp P_0 + P_1 P_0^\perp$ is a Fredholm operator, in particular has closed range.

Now to see that $L_{X_0} + L_{X_1}$ is closed (with respect to the subspace topology induced from $L^2$), we have to show that $L_{X_0}+L_{X_1} = (L_{X_0} + L_{X_1})^c \cap C^\infty(X, \Sigma_X)$. To this end, let $\varphi \in (L_{X_0} + L_{X_1})^c$ be smooth. Since by the previous step, $(L_{X_0} + L_{X_1})^c = L_{X_0}^c + L_{X_1}^c$, we can write $\varphi = \varphi_0 + \varphi_1$ with $\varphi_i \in L_{X_i}^c$. 
Since $L_{X_0}$ and $L_{X_1}$ are not necessarily orthogonal, we do not necessarily have $P_0\varphi_1 = 0$ and $P_1\varphi_0 = 0$. However, since $P_i \varphi_i = \varphi_i$, we obtain the system of equations
\begin{equation*}
  \varphi_0 = P_0P_1\varphi_1 - P_0 \varphi , \qquad \varphi_1 = P_1P_0\varphi_0 - P_1 \varphi.
\end{equation*}
By the observations above, the principal symbols of both $P_0P_1$ and $P_1P_0$ vanish, hence they are pseudo-differential operators of order $-1$. Moreover, $P_i \varphi$ is smooth. A bootstrap argument now yields that $\varphi_0$ and $\varphi_1$ are smooth, hence $\varphi_i \in L_{X_i}$ and $\varphi \in L_{X_0}+L_{X_1}$. As the other inclusion is trivial, this finishes the proof.
%Let now $Q_0$ be the projection onto $L_{X_0}^c$ along $L_{X_1}^c$. Then
%\begin{equation*}
%  Q_0 \varphi = Q_0(\varphi_0 + \varphi_1) = Q_0\varphi_0 = \varphi_0.
%\end{equation*}
%Since $Q_0$ is also a  pseudodifferential operator (of order zero), $Q_0 \varphi$ is again smooth, hence $\varphi_0$ is smooth, and so is $\varphi_1 = \varphi-\varphi_0$. Hence $\varphi_i \in L_{X_i} = L_{X_i}^c \cap C^\infty(Y, \Sigma_Y)$.
\end{proof}

\begin{remark} \label{RemarkL2Version}
The proof above in fact shows that an $L^2$-version of Corollary~\ref{CorCobordism} holds. More precisely, in $L^2(Y, \Sigma_Y)$, we still have
\begin{equation*}
  L_{X_0}^c \cap L_{X_1}^c = K, \qquad \text{and} \qquad  L_{X_0}^c + L_{X_1}^c = \bigl(\gamma(\nu) K\bigr)^\perp,
\end{equation*}
where $L_{X_i}^c$ denotes the completion of $L_{X_i}$ in $L^2(Y, \Sigma_Y)$.
\end{remark}

%%%%%%%%%%%%%%%%%%%%%%%%%%%%%%%%%%%%%%%%%%%%%%%%%%%%%%%%%%%%%%%%%

\subsection{Construction of the Anomaly Theory} \label{SectionTwist}

In this section, we finally define the {\em anomaly theory} or {\em twist functor} 
\begin{equation*}
  T: \mathrm{Bord}_{\langle d-1, d\rangle}^{\Cl} \longrightarrow \mathrm{sAlg},
\end{equation*}
where $\mathrm{Bord}_{\langle d-1, d\rangle}^{\Cl}$ denotes the bordism category of Clifford manifolds discussed in Remark~\ref{RemarkCliffordBordism} and $\mathrm{sAlg}$ denotes the bicategory of $\bbZ_2$-graded algebras, bimodules and intertwiners. Throughout, we fix a dimension $d$ for all Clifford modules.

To start with, we define a complex vector space $W_Y$ with a real structure associated to  $(d-1)$-dimensional Clifford manifolds $Y$. Remember first that the grading operator $J$, defined in \eqref{DefinitionJ}, squares to $-1$, so that it is a complex structure when acting pointwise on $C^\infty(Y, \Sigma_Y)$. We therefore define
\begin{equation} \label{DefWY}
  W_Y \coloneqq C^\infty(Y, \Sigma_Y)_J,
\end{equation}
where the subscript $J$ indicates that we consider the space of spinors on $Y$ as a complex vector space using $J$ instead of the usual complex structure. A compatible real structure is defined by
\begin{equation*}
  \overline{\varphi} \coloneqq i \gamma(\mathbf{1})\varphi,
\end{equation*}
Clifford multiplication with the canonical vector $\mathbf{1} \in \underline{\bbR} \subset TY \oplus \underline{\bbR}$. Because Clifford multiplication is odd, this conjugation anti-commutes with $J$, hence is a real structure for $J$. A Hermitian metric on $W_Y$ is given by
\begin{equation}\label{MetricWY}
  \langle \varphi, \psi \rangle_{W_Y} \coloneqq \mathrm{Re}\langle \varphi, \psi\rangle_{L^2(Y)} + i \mathrm{Re}\langle J \varphi, \psi \rangle_{L^2(Y)};
 \end{equation}
one easily checks that $\langle \overline{\varphi}, \overline{\psi} \rangle_{W_Y} = \overline{\langle \varphi, \psi \rangle}_{W_Y}$ and that the inner product is Hermitian with respect to $J$. Notice that when passing from $Y$ to $Y^\vee$, the real structure is replaced by its negative, and when passing from $Y$ to $\overline{Y}$, $J$ is replaced by $-J$. In total, we have canonical isomorphisms
\begin{equation} \label{IdentificationsW}
  W_{Y^\vee} \cong - W_Y, \qquad W_{\overline{Y}} = \overline{W}_Y,
\end{equation}
which each are the identities on the underlying space $C^\infty(Y, \Sigma_Y)$ (when forgetting the grading and Clifford multiplication).

\medskip

For  a $d$-dimensional Clifford manifold $X$ with boundary $\partial X = Y$, remember the definitions \eqref{DefHarmonicSpinors}  of the space of harmonic spinors $\mathcal{H}_X$ on $X$ and the corresponding space $L_X \subset W_Y$ of boundary values, c.f.\ \eqref{DefinitionLagrangian}. Since $L_X$ is invariant under multiplication by $J$, it is a complex subspace of $W_Y$, and due to its invariance under multiplication by $i$, Thm.~\ref{TheoremLagrangian} shows that in $W_Y$,
\begin{equation*}
  \overline{L}_X = i \gamma(\nu)L_X = L_X^\perp, 
\end{equation*}
hence $L_X$ is a Lagrangian in $W_Y$. More generally, if $X$ is a bordism between Clifford manifolds $Y_0$ and $Y_1$, the identification \eqref{IdentificationsW} implies that we naturally have $L_X \subseteq W_{Y_0} \oplus - W_{Y_1}$. We now have the following lemma.

\begin{lemma} \label{LemmaCompositionLagrangian}
Let $X$ be a bordism from $Y_0$ to $Y_1$ and let $Z$ be a closed hypersurface that splits $X$ into two parts, $X_{0}$ and $X_{1}$. Assume that $X$ has product structure near $Y_0$, $Z$ and $Y_1$. Then the composition of the corresponding Lagrangians $L_{X_{0}}$ and $L_{X_{1}}$ is a Lagrangian, namely $L_X\subseteq W_{Y_0} \oplus - W_{Y_1}$.
%Let $X_{01}$, $X_{12}$ be bordisms from $Y_0$ to $Y_1$, respectively from $Y_1$ to $Y_2$, and assume that they have product structure near the boundary such that the composition $X_{02}$ is defined. Then the composition of the Lagrangians $L_{X_{01}}$ and $L_{X_{12}}$ is a Lagrangian; in fact it is equal to $L_{X_{02}}$.
\end{lemma}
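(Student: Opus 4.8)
The crux is the observation that $L_X$ is \emph{already} known to be a Lagrangian: by Theorem~\ref{TheoremLagrangian} applied to the compact Clifford manifold $X$, which has product structure near $\partial X = Y_0 \sqcup Y_1^\vee$, together with the identification $W_{\partial X} = W_{Y_0} \oplus W_{Y_1^\vee} = W_{Y_0} \oplus - W_{Y_1}$ from \eqref{IdentificationsW}, precisely as in the discussion preceding the lemma. It therefore suffices to prove the equality of subspaces $L_X = L_{X_0} \circ L_{X_1}$ inside $W_{Y_0} \oplus - W_{Y_1}$; the composition then automatically inherits being a Lagrangian. I would establish this equality by proving (a) the inclusion $L_X \subseteq L_{X_0} \circ L_{X_1}$ and (b) that $L_{X_0} \circ L_{X_1}$ is totally isotropic, and then invoking the maximality of Lagrangians among totally isotropic subspaces (recorded in \S\ref{SectionLagrangians}) to promote the inclusion to an equality.

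For (a): let $\varphi \in L_X$, so $\varphi = \Phi|_{\partial X}$ for some $\Phi \in \mathcal{H}_X$. Since harmonicity is a local condition and $\Phi$ is smooth up to the boundary on all of $X$, the restrictions $\Phi|_{X_0}$ and $\Phi|_{X_1}$ lie in $\mathcal{H}_{X_0}$ and $\mathcal{H}_{X_1}$ respectively. Setting $w \coloneqq \Phi|_Z$, and using the identification $W_{Z^\vee} \cong - W_Z$ of \eqref{IdentificationsW} (so that $L_{X_0} \subseteq W_{Y_0} \oplus - W_Z$, $L_{X_1} \subseteq W_Z \oplus - W_{Y_1}$, and the composition is formed over $W_Z$ exactly as in the statement), one obtains $(\Phi|_{Y_0}, w) \in L_{X_0}$ and $(w, \Phi|_{Y_1}) \in L_{X_1}$, hence $\varphi \in L_{X_0} \circ L_{X_1}$.

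For (b): the subspaces $L_{X_0}$ and $L_{X_1}$ are themselves Lagrangians, hence in particular totally isotropic, by Theorem~\ref{TheoremLagrangian} applied to $X_0$ and $X_1$, which are compact and, by the hypotheses on $X$ (product structure near $Y_0$, $Z$ and $Y_1$), have product structure near their boundaries. A short bilinear-form computation --- in which the sign flip of the bilinear form on $- W_Z$ makes the contributions of the intermediate vector $w$ cancel --- then shows that $L_{X_0} \circ L_{X_1}$ is totally isotropic; this is the elementary fact recorded right after the definition of the composition of Lagrangians. Since $L_{X_0} \circ L_{X_1}$ is totally isotropic and contains the Lagrangian $L_X$, maximality forces $L_{X_0} \circ L_{X_1} = L_X$, which is therefore a Lagrangian in $W_{Y_0} \oplus - W_{Y_1}$.

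The step that looks delicate is the reverse inclusion $L_{X_0} \circ L_{X_1} \subseteq L_X$: two harmonic spinors on $X_0$ and $X_1$ sharing the same boundary value on $Z$ glue to a spinor on $X$ that is continuous but a priori only piecewise smooth, and the assertion that it is smooth across $Z$ --- equivalently, harmonic on all of $X$ --- is an elliptic-regularity statement that does not come for free. The argument above deliberately sidesteps this: once $L_X$ is recognised as a maximal totally isotropic subspace, the equality $L_{X_0} \circ L_{X_1} = L_X$ is purely formal and needs no gluing of harmonic spinors. (One could instead verify the regularity across $Z$ directly, but the maximality argument is shorter, and is the one I would use.)
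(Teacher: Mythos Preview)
Your argument is correct and takes a genuinely different route from the paper's. The paper proves the reverse inclusion $L_{X_0}\circ L_{X_1}\subseteq L_X$ directly: given harmonic spinors $\Phi_i$ on $X_i$ agreeing along $Z$, it glues them to a continuous spinor $\Phi$ on $X$, uses the integration-by-parts formula \eqref{IntegrationByParts} on $X_0$ and $X_1$ separately to see that the boundary terms along $Z$ cancel, deduces that $\Phi$ is a weak solution of $D_X\Phi=0$, and then invokes elliptic regularity to conclude $\Phi\in\mathcal{H}_X$. You instead observe that $L_X$ is already a Lagrangian by Theorem~\ref{TheoremLagrangian}, that $L_{X_0}\circ L_{X_1}$ is totally isotropic, and that Lagrangians are maximal among totally isotropic subspaces (as recorded in \S\ref{SectionLagrangians}); the inclusion $L_X\subseteq L_{X_0}\circ L_{X_1}$ then forces equality. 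Your approach is shorter and purely algebraic once Theorem~\ref{TheoremLagrangian} is in hand, entirely bypassing the weak-solution/regularity step. The paper's approach, on the other hand, actually establishes the analytic content that the glued spinor is smooth across $Z$ --- a fact that is implicit in the equality but which your argument obtains only indirectly, and which one might want explicitly elsewhere.
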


\begin{figure}[h] 
\begin{center}
\includegraphics[scale=0.28, draft = false]{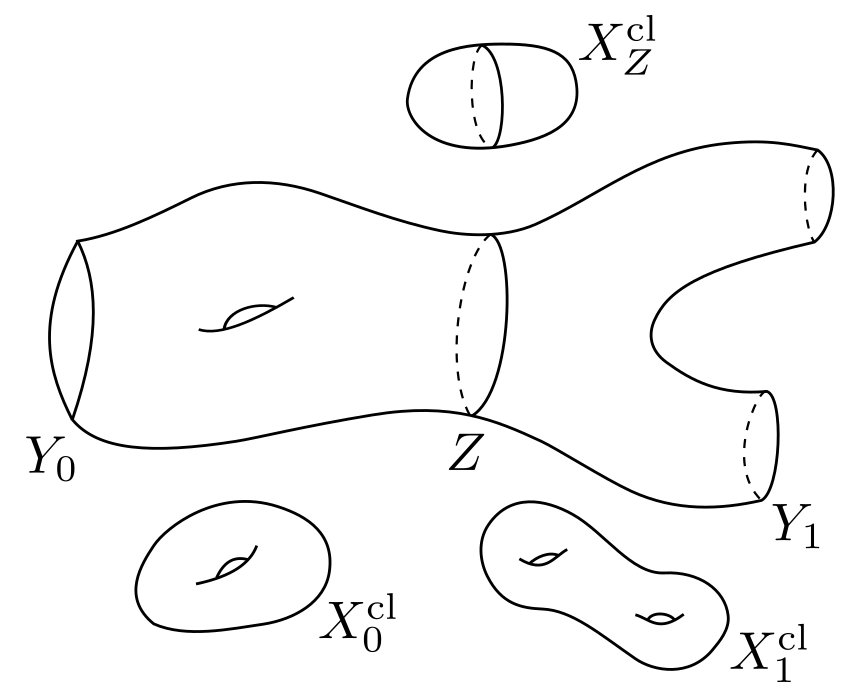}
\caption{Cutting into two pieces} \label{FigureCuttingIntoTwoPieces}
\end{center}
\end{figure}

\begin{proof}
The inclusion $L_X \subset L_{X_{1}} \circ L_{X_{0}}$ is trivial, since harmonic spinors on $X$ restrict to harmonic spinors on $X_0$ respectively $X_1$. 

Let now $(\varphi_0, \varphi_1) \in L_{X_0} \circ L_{X_1}$, where $\varphi_i \in C^\infty(Y_i, \Sigma_{Y_i})$. Then there exist harmonic spinors $\Phi_i$ on $X_i$, $i = 0, 1$, such that $\Phi_i|_{Y_i} = \varphi_i$ and such that $\Phi_1|_Z = \Phi_0|_Z \eqqcolon \psi$. Hence we obtain a continuous spinor $\Phi$ on $X$ by gluing $\Phi_0$, $\Phi_1$ together at $Z$. Then for any spinor $\Psi$ on $X$ that is compactly supported in the interior of $X$, the integration by parts formula \eqref{IntegrationByParts}, applied to $X_0$ and $X_1$ separately, yields
\begin{equation*}
\begin{aligned}
  \langle \Phi, D_X \Psi\rangle_{L^2(X)} &= \langle D_{X_1}\Phi_1, \Psi\rangle_{L^2(X_1)} +  \langle \psi, \gamma(\nu_Z)\Psi|_{Z}\rangle_{L^2(Z)}\\
  &\quad- \langle \varphi_1, \gamma(\nu_1) \Psi|_{Y_1}\rangle_{L^2(Y_1)} +\langle D_{X_0}\Phi_0, \Psi\rangle_{L^2(X_0)} \\
  &\quad+  \langle \varphi_0, \gamma(\nu_0) \Psi|_{Y_0}\rangle_{L^2(Y_0)} - \langle \psi, \gamma(\nu_Z)\Psi|_{Z}\rangle_{L^2(Z)}.
\end{aligned}
\end{equation*}
Here $\nu_i$ denotes the outward unit normal of $Y_i$ and $\nu_Z$ denotes the unit normal for $Z$ pointing towards $Y_1$. 
The first terms including $D_{X_i} \Phi_i$ are zero since $\Phi_0$, $\Phi_1$ are harmonic by assumption. 
The terms involving the boundary restrictions to $Z$ cancel because $\Phi_0$ and $\Phi_1$ match at the boundary, and the other terms vanish since $\Psi|_{Y_1} = 0$, $\Psi|_{Y_0} = 0$, as $\Psi$ is compactly supported in the interior of $X$. This shows that $\Phi$ is a weak solution to the equation $D_X\Phi = 0$ in the interior of $X$. By elliptic regularity, it is therefore a strong solution, hence smooth on all of $X$; in other words, $\Phi \in \mathcal{H}_X$. Hence $(\varphi_0, \varphi_1) \in L_X$, as claimed.
\end{proof}

\begin{remark} \label{RemarkFunktorBordLagRel}
The above observations can be used to construct a functor
\begin{equation*}
  \mathcal{L}: \mathrm{Bord}_{\langle d-1, d\rangle}^{\Cl} \longrightarrow \mathrm{LagRel},
\end{equation*}
where $\mathrm{LagRel}$ is the category of Lagrangian relations described in Remark~\ref{RemarkCategory}. Since the objects of $\mathrm{LagRel}$ are {\em complete} complex vector spaces with a real structure and a polarization, we define $\mathcal{L}(Y) \coloneqq W_Y^c = L^2(Y, \Sigma_Y)_J$, the Hilbert space completion of $W_Y$. A polarization on this Hilbert space is given as follows. Define the boundary Dirac operator operator $A_Y$ by the formula
\begin{equation} \label{BoundaryDirac}
  A_Y \varphi \coloneqq \sum_{j=1}^{d-1} \gamma(\mathbf{1})\gamma(e_j) \nabla^\Sigma_{e_j} \varphi
\end{equation}
in terms of a local orthonormal basis $e_1, \dots, e_{d-1}$ of $Y$. This is a self-adjoint, elliptic differential operator on the closed manifold $Y$. Let
\begin{equation} \label{DefinitionLY}
L_Y \coloneqq \bigoplus_{\lambda <0} \mathrm{Eig}(A_Y, \lambda) ~\subset~ C^\infty(Y, \Sigma_Y)
\end{equation}
be the subspace spanned by negative eigenvalues of $A_Y$. Since $\gamma(\mathbf{1})$ anti-commutes with $A_Y$, $\gamma(\mathbf{1}) L_Y$ is just the space spanned by the negative eigenvalues. Hence (as $\ker(A_Y)$ is finite-dimensional), the completion $L_Y^c$ is a sub-Lagrangian on $W_Y^c$, determining a polarization. 

It is well-known \cite[Proposition 14.2]{BoosBavnbek} that the orthogonal projection onto $L_Y$ is a pseudodifferential operator of order zero on $Y$, and its principal symbol $p(\xi)$, $\xi \in TY$, is the projection onto the $i\vert \xi \vert$ eigenspace of the endomorphism $\gamma(\mathbf{1})\gamma(\xi)$ (which is just the principal symbol of $A_Y$ at $\xi$). Therefore, it follows from Thm.~\ref{ThmCobordism} that if $Y$ bounds a $d$-dimensional Clifford manifold $X$, then $L_Y^c$ and $L_X^c$ are close in $W_Y^c$, in the sense of Remark~\ref{RemarkCategory}. In particular, if $X$ is a $d$-dimensional bordism from $Y_1$ to $Y_0$, then the completion $L^c_X$  is close to $L_{Y_0} \oplus \overline{L}_{Y_1}$ in $W_{Y_0 \sqcup Y_1^\vee}^c$. This implies that $\mathcal{L}(X) \coloneqq L_X^c$ is indeed a morphism in the category $\mathrm{LagRel}$ from $(W_{Y_1}^c, [L_{Y_1}])$ to $(W_{Y_0}^c, [L_{Y_0}])$.
Moreover, it follows from Thm.~\ref{ThmComposition} that the composition $L_{01}^c \circ L_{12}^c$ is a Lagrangian, a result that does not follow directly from Lemma~\ref{LemmaCompositionLagrangian}.
\end{remark}

We would now like to post-compose the assignment $Y \mapsto W_Y$ and  $X \mapsto L_X$ as discussed in Remark~\ref{RemarkFunktorBordLagRel} above with the ``second quantization functor'' described at the beginning of \S\ref{SectionBimodules}. The problem with this, however, is that this second quantization procedure is {\em not} functorial due to the {\em anomaly} given be the space $K$ in Thm.~\ref{ThmGluing}. However, notice that when passing from the space of harmonic spinors $\mathcal{H}_X$ to the Lagrangian $L_X$, we lose some information, as the map has a kernel; it is the finite-dimensional harmonic spinors on the {\em closed components} of $X$, c.f.\ Lemma~\ref{LemmaKernel}. The solution to the aforementioned problem is to take this information into account; the definition is as follows.

\begin{definition}[The Twist]
For a $(d-1)$-dimensional closed Clifford manifold $Y$, we set 
\begin{equation*}
  T(Y) \coloneqq \Cl(W_Y).
\end{equation*}
For a bordism $X$ between two Clifford manifolds $Y_0$ and $Y_1$ having product structure near the boundary, we set
\begin{equation}
  T(X) \coloneqq \Lambda L_X \otimes \Lambda^{\mathrm{top}} (\mathcal{H}_{X^{\cl}})_J,
\end{equation}
which is naturally a $T(Y_0)$-$T(Y_1)$-bimodule, as discussed above. Here $X^{\cl}$ denotes the closed part of $X$ and the subscript $J$ indicates that we consider $\mathcal{H}_{X^{\cl}}$ as a complex vector space using $J$.
\end{definition}

In the definition above, for simplicity, we suppressed the boundary identification morphisms for the bordism $X$.

\begin{remark} \label{RemIandComplexConjugate}
If we split $\mathcal{H}_X = \mathcal{H}^+_X \oplus \mathcal{H}^-_X$, according to the grading of $\Sigma_X$, then $(\mathcal{H}_X^+)_J = \mathcal{H}_X^+$ as complex vector space and $(\mathcal{H}_X^-)_J \cong \overline{\mathcal{H}_X^-}$, the complex conjugate vector space. Hence
\begin{equation*}
  \Lambda^{\mathrm{top}}(\mathcal{H}_{X^{\cl}})_J = \Lambda^{\mathrm{top}}\mathcal{H}_{X^{\cl}}^+ \otimes\overline{\Lambda^{\mathrm{top}}\mathcal{H}_{X^{\cl}}^-},
\end{equation*}
the conjugate of the {\em determinant line} of the Dirac operator $D_{X^{\cl}}$. Hence the functor $T$ indeed assigns the conjugate determinant line to closed $d$-dimensional manifolds, as desired. 
\end{remark}

\begin{remark}
  With a view on Remark~\ref{RemarkFunktorBordLagRel}, there is an $L^2$-version $T^c$ of $T$, given by $T^c(Y) = \Cl(W_Y^c)$ and $T^c(X) = \Lambda L_X^c$. Inclusion provides a natural transformation $T \Rightarrow T^c$ of functors.
\end{remark}

To discuss functoriality, we need the following lemma.

\begin{lemma} \label{LemmaAssumptions}
Let $X$ be a bordism of Clifford manifolds from $Y_0$ to $Y_1$ and let $Z$ be a closed hypersurface that splits $X$ into two parts, $X_{0}$ and $X_{1}$. Assume that $X$ has product structure near $Y_0$, $Z$ and $Y_1$. Then the corresponding Lagrangians $L_{X_{0}}$ and $L_{X_{1}}$ satisfy the assumptions $(1)$ and $(2)$ of Thm.~\ref{ThmGluing}.
\end{lemma}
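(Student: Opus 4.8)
The plan is to verify the two conditions of Theorem~\ref{ThmGluing} by translating them into geometric statements about the hypersurface $Z$ and invoking the pseudodifferential machinery already developed in \S\ref{SectionAnalysis}. Recall that condition (1) asks that the map $\sigma$ have closed range on $L = L_{X_0}\oplus L_{X_1}$, and by the lemma preceding Theorem~\ref{ThmComposition}, $\image(\sigma|_L)^\perp = \overline{K}$, where $K = \{w\in W_Z \mid (0,w)\in L_{X_0},\ (w,0)\in L_{X_1}\}$. In our geometric situation, unwinding the identifications \eqref{IdentificationsW}, the space $K$ is exactly $L_{X_0}\cap L_{X_1}\subseteq W_Z$ (boundary values at $Z$ of spinors harmonic on $X_0$ and separately harmonic on $X_1$), and by Corollary~\ref{CorCobordism} this equals $\{\Phi|_Z \mid \Phi\in\mathcal{H}_X\}$, which is finite-dimensional since $\mathcal{H}_X$ is. So $K$ is finite-dimensional and hence closed; the content is to see that $\image(\sigma|_L)$ is itself closed, not merely of finite codimension in its closure.

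The key step is therefore to show closedness of $L_{X_0}+L_{X_1}$ inside $C^\infty(Z,\Sigma_Z)$ (more precisely, of the image of $\sigma|_L$, but these are the same up to the identification $\sigma(v_0,v_1,v_1',v_2) = v_1 - v_1'$, whose image on $L$ is $P_1 L_{X_0} + P_1 L_{X_1} = L_{X_0}+L_{X_1}$ after accounting for the sign flip $W_Z \leftrightarrow -W_Z$). But this is precisely the statement proved in the course of Corollary~\ref{CorCobordism}: there it is shown, using the pseudodifferential calculus and Theorem~\ref{ThmCobordism}/Theorem~\ref{TheoremLagrangian}, that $L_{X_0}^c + L_{X_1}^c$ is closed in $L^2(Z,\Sigma_Z)$ (via the Fredholmness of $P_1^\perp P_0 + P_1 P_0^\perp = \mathrm{id} + S$ with $S$ of order $-1$), and moreover that a smooth element of the sum decomposes into smooth summands by a bootstrap argument, so $L_{X_0} + L_{X_1}$ is closed in the subspace topology. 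Thus I would essentially cite Corollary~\ref{CorCobordism} and Remark~\ref{RemarkL2Version}: they give both finite-dimensionality of $K$ and closedness of $L_{X_0}+L_{X_1}$, which together yield that $\sigma|_L$ has closed range, i.e.\ condition (1).

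For condition (2) --- that $P_L\overline{U}$ and $P_{\overline{L}}U$ are closed --- the cleanest route is to note that the spaces $W_{Y_0}, W_{Y_1}, W_{Y_2}$ here (with $W_{Y_1}$ replaced by $W_Z$) are \emph{not} complete, so Theorem~\ref{ThmComposition}'s equivalence does not directly apply; however, the proof of that equivalence is purely an application of the closed range theorem, which holds in any inner product space: $\image(\sigma P_L^*)$ is closed iff $\image((\sigma P_L^*)^*) = \image(P_L\overline\delta) = P_L\overline U$ is closed. Since $\sigma P_L^* = \sigma|_L$ (up to the canonical identifications) and we have just established its range is closed, we get closedness of $P_L\overline U$; conjugating gives closedness of $P_{\overline L}U$. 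I expect the main obstacle to be purely bookkeeping: carefully matching the abstract spaces $U$, $L_\sigma$, $\sigma$, $\delta$ of \S\ref{SectionLagrangians} with the geometric objects, in particular getting the sign conventions from \eqref{IdentificationsW} right so that $W_1$ in the abstract setup corresponds to $W_Z$ and $\sigma(L)$ corresponds to $L_{X_0}+L_{X_1}$ rather than to some twisted version of it. Once that dictionary is fixed, conditions (1) and (2) both follow from the already-proved Corollary~\ref{CorCobordism} together with the closed range theorem, so no new analysis is required.
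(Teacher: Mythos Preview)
There are two genuine gaps in your argument.

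\textbf{Condition (1).} You invoke Corollary~\ref{CorCobordism} to conclude that $L_{X_0}+L_{X_1}$ is closed in $C^\infty(Z,\Sigma_Z)$, but that corollary is stated (and proved) under the hypothesis that $X$ is a \emph{closed} manifold divided by a hypersurface. Here $X$ is a bordism with boundary $Y_0\sqcup Y_1^\vee$, so $X_0$ has boundary $Y_0\sqcup Z^\vee$ and $L_{X_0}$ lives in $W_{Y_0}\oplus -W_Z$, not in $W_Z$ alone; your equation $P_1 L_{X_0}+P_1 L_{X_1}=L_{X_0}+L_{X_1}$ is false unless $Y_0=Y_1=\emptyset$. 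Neither the statement nor the pseudodifferential argument of Corollary~\ref{CorCobordism} applies directly. The paper closes this gap by forming the double $M=X\sqcup_{\partial X}\overline X$ (a closed manifold) with hypersurface $N=Z\sqcup\overline Z$, applying the closed-case result there, and observing that restriction of harmonic spinors yields $P_{W_Z}(L_{M_0}+L_{M_1})\subseteq\image(\sigma)$, which forces $\image(\sigma)$ to have finite codimension, hence to be closed.

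\textbf{Condition (2).} Your claim that the closed range theorem ``holds in any inner product space'' is incorrect: it is a Banach space result and genuinely needs completeness. The paper explicitly flags this (``If $W_Z$ and $L$ were complete, this would follow \ldots; since they are not, we have to give an additional argument'') and proceeds by extending $\sigma$ to the $L^2$-completions, applying the closed range theorem \emph{there} to get that $\sigma_c^*$ has closed range, and then running an elliptic-regularity bootstrap (using that $P_{ZZ}+P_{ZZ}'$ has principal symbol equal to the identity) to show that a smooth element of $\image(\sigma_c^*)$ already lies in $\image(\sigma^*)$. Without this step your deduction of (2) from (1) does not go through.
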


\begin{proof}
Set $L \coloneqq L_{X_0} \oplus L_{X_1}$. To verify property (1) we need to show  that the image of the map 
\begin{equation*}
\sigma: L = L_{X_0} \oplus L_{X_1} \rightarrow W_Z, \qquad \sigma(\varphi_0, \varphi_1, \varphi_1^\prime, \varphi_2) = \varphi_1 - \varphi_1^\prime
\end{equation*}
is closed. To see this, suppose first that $Y_0 = Y_1 = \emptyset$. In this case, we have $\image(\sigma) = L_{X_0} + L_{X_1} = (\gamma(\nu)K)^\perp \subseteq W_{Y_1}$, by Thm.~\ref{ThmCobordism}. This is clearly closed; in fact, $L_{X_0} + L_{X_1}$ has even finite codimension in $W_{Y_1}$, since $K$ is finite-dimensional.  

If at least one of $Y_0$ and $Y_1$ is non-empty, we form the double $M = X \sqcup_{\partial X} \overline{X}$, which is then  a closed Clifford manifold. This has the hypersurface $N = Z \sqcup \overline{Z}$ that separates $M$ into a bordism $M_1 = X_1 \sqcup_{Y_1} \overline{X}_1$ from $\emptyset$ to $N$ and a bordism $M_0 = X_0 \sqcup_{Y_0}  \overline{X}_0$ from $N$ to $\emptyset$. By the previous discussion, $L_{M_0} + L_{M_1}$ has finite codimension in $W_N = W_Z \oplus W_{\overline{Z}}$. This implies that 
\begin{equation*}
P_{W_Z}(L_{M_0} + L_{M_1}) = \bigl\{ \varphi_0 + \varphi_1 \mid \exists \Phi_i \in \mathcal{H}_{M_i}: \Phi_i|_Z = \varphi_i \bigr\}
\end{equation*}
 has finite codimension in $W_Z$, where $P_{W_Z}$ is the orthogonal projection in $W_N$ onto $W_Z$ (in fact, this codimension is zero unless $X$ had closed components to begin with, since the Dirac operator on the double is invertible, c.f.\ the proof of Thm.~\ref{TheoremLagrangian}). However, since a harmonic spinor on $M_i$ restricts to a harmonic spinor on $X_i$, we have 
 \begin{equation*}
 P_{W_Z}(L_{M_0} + L_{M_1}) \subseteq P_{W_Z}(L_{X_0} + L_{X_1}) \subseteq \image(\sigma). 
 \end{equation*}
 Hence $\image(\sigma)$ has finite codimension and is therefore closed.
 
It remains to show (2), which by the observations from \S\ref{SectionLagrangians} is equivalent to showing that the adjoint $\sigma^*$ of the map $\sigma$ above has closed image. If $W_Z$ and $L$ were complete, this would follow from the observations above together with the closed range theorem; since they are not, we have to give an additional argument. To this end, form the completions $L^c_{X_i}$ of $L_{X_i}$ in $L^2(\partial X_i, \Sigma_{\partial X_i})$. Using Remark~\ref{RemarkL2Version}, the arguments above now can be easily extended to show that also the map $\sigma_c : L^{c}_{X_0} \oplus L^{c}_{X_1} \rightarrow L^2(Z, \Sigma_Z)$, obtained by extending $\sigma$ by continuity to the closure, has closed range; namely, its range contains the space $P_{W_Z}(L_{M_0}^c + L_{M_1}^c)$ of finite codimension. The closed range theorem then implies that its adjoint $\sigma_c^*: L^2(Z, \Sigma_Z) \rightarrow L^{c}_{X_0} \oplus L^{c}_{X_1}$ has closed range. 

We now claim that if $\varphi  \in \image(\sigma_c^*)$ is smooth, then in fact $\varphi \in \image(\sigma^*)$, which  finishes the proof.
To this end, notice that $\sigma^*_c(\psi) = (P_0 (0, \psi), -P_1(\psi, 0))$, where $P_i$ is the orthogonal projection onto $L_{X_i}$. With respect to the direct sum decompositions $W_{\partial X_0} = W_{Y_0} \oplus - W_Z$ and $W_{\partial X_1} = W_Z \oplus - W_{Y_1}$, write
\begin{equation*}
  P_0 ~\widehat{=}~ \begin{pmatrix} P_{00} & P_{0Z} \\ P_{Z0} & P_{ZZ} \end{pmatrix}, \qquad P_1 ~\widehat{=}~ \begin{pmatrix} P_{ZZ}^\prime & P_{1Z} \\ P_{Z1} & P_{11} \end{pmatrix}.
\end{equation*}
Now if $\sigma_c^{\ast}(\psi)$ is smooth, this implies in particular that $P_{ZZ} \psi$ and $P_{ZZ}^\prime \psi$, and also $(P_{ZZ}+P_{ZZ}^\prime)\psi$ are smooth. By Thm.~\ref{ThmCobordism}, $P_{ZZ}$ and $P_{ZZ}^\prime$ are pseudodifferential operators of order zero, and for their symbols, we have $p_{ZZ}(\xi) = 1 - p^\prime_{ZZ}(\xi)$, $\xi \in TY$. This implies that $P_{ZZ} + P_{ZZ}^\prime$ is elliptic so that $\psi$ is smooth by regularity.
\end{proof}

By virtue of the lemma above, we can now use Thm.~\ref{ThmGluing} to define isomorphisms
\begin{equation} \label{MapTau}
  \tau : T(X) \longrightarrow T(X_{0}) \otimes_{T(Z)} T(X_{1})
\end{equation}
of $T(Y_0)$-$T(Y_1)$-bimodules, in order to complete the definition of the functor $T$. These isomorphisms will be coherent in the sense that they fit into a commutative diagram of the form \eqref{CoherenceGeneral}, c.f.\ Thm.~\ref{ThmCoherence} below. To define these isomorphisms, first observe that
\begin{align*}
\mathcal{H}_{X^{\cl}} =  \mathcal{H}_{X_0^{\cl}} \oplus \mathcal{H}_{X_1^{\cl}} \oplus \mathcal{H}_{X_Z^{\cl}},
\end{align*}
where $X_i^{\cl}$ is the closed part of $X_i$, $i=0, 1$, and $X_Z^{\cl}$ is the remaining part of $X^{\cl}$, which can be identified with the connected components of $X^{\cl}$ that touch $Z$, by virtue of Lemma~\ref{LemmaKernel}, see Figure \ref{FigureCuttingIntoTwoPieces}. 
It now suffices to construct an isomorphism
\begin{equation*}
  \tau^\prime: \Lambda L_X \otimes \Lambda^{\mathrm{top}} (\mathcal{H}_{X_Z^{\cl}})_J \longrightarrow \Lambda L_{X_0} \otimes_{\Cl(W_Z)} \Lambda L_{X_1};
\end{equation*}
the isomorphism $\tau$ from \eqref{MapTau} is then just $\tau^\prime$ tensored with the canonical isomorphism
\begin{equation*}
  \Lambda^{\mathrm{top}} \bigl(\mathcal{H}_{X_0^{\cl} \sqcup X_1^{\cl}}\bigr)_J \cong \Lambda^{\mathrm{top}} \bigl(\mathcal{H}_{X_0^{\cl}} \oplus \mathcal{H}_{X_1^{\cl}}\bigr)_J \cong \Lambda^{\mathrm{top}} \bigl(\mathcal{H}_{X_0^{\cl}}\bigr)_J \otimes \Lambda^{\mathrm{top}} \bigl(\mathcal{H}_{X_1^{\cl}}\bigr)_J.
\end{equation*}
  We define $\tau^\prime$ by requiring
\begin{equation} \label{DefTauPrime}
  \tau^\prime\bigl(\Omega_X \otimes \Phi_1 \wedge \cdots \wedge \Phi_n\bigr) = \frac{\Omega_{X_0} \cdot \Phi_1|_Z \cdots \Phi_n|_Z \otimes \Omega_{X_1}}{\det(\Res_Z^*\Res_Z|_{\mathcal{H}_{X_Z}^{\cl}})},
\end{equation}
where $\Res_Z: \mathcal{H}_X \rightarrow W_Z$ is the boundary restriction map for $Z$. A few remarks are in order. First notice that by Lemma~\ref{LemmaKernel}, $\Res_Z$ (or rather its restriction to $\mathcal{H}_{X^{\cl}_Z}$) provides an isomorphism between $\mathcal{H}_{X_Z^{\cl}}$ and the space
\begin{equation*}
  K = \{ \varphi \mid (0, \varphi) \in L_{X_0}, (\varphi, 0) \in L_{X_1}\} \subset W_Z.
\end{equation*}
Hence Thm.~\ref{ThmGluing} asserts that there exists a bimodule isomorphism $\tau^\prime$ subject to \eqref{DefTauPrime} and that it is in fact uniquely determined by this requirement. 

\begin{remark}
The determinant factor  in the definition \eqref{DefTauPrime} of $\tau^\prime$ depends on the choice of a metric on $\mathcal{H}_{X_Z^{\cl}}$, respectively the determinant line $\Lambda^{\mathrm{top}}\mathcal{H}_{X_Z^{\cl}}$. The precise choice is immaterial for the construction of the functor here; however, we remark that there is a natural choice of such a metric, as the determinant line carries a natural metric, the {\em Quillen metric}, which has the property that it varies smoothly with the Riemannian metric and Clifford module structure on $X$.
\end{remark}

The following theorem now shows the coherence of these isomorphisms $\tau$, completing the construction of the twist functor $T$.

\begin{theorem} \label{ThmCoherence}
Let $X$ be a bordism between $(d-1)$-dimensional Clifford manifolds $Y_3$ and $Y_0$. Let $Y_1$, $Y_2$ be two closed hypersurfaces that split $X$ into bordisms $X_{ij}$ from $Y_j$ to $Y_i$, for $0 \leq i < j \leq 3$. Assume that $X$ has product structure near each $Y_i$. Then the diagram
\begin{equation*}
  \begin{tikzcd}[column sep={4cm,between origins}]
     & T(X) \ar[dl, bend right = 20, "\tau_{023}"'] \ar[dr, bend left = 20, "\tau_{013}"] & \\
    T(X_{02}) \otimes_{T(Y_2)} T(X_{23}) \ar[d, "\tau_{012} \otimes \mathrm{id}"'] & & T(X_{01})\otimes_{T(Y_1)} \ar[d,  "\mathrm{id} \otimes \tau_{123}"] T(X_{13}) \\
     \bigl(T(X_{01})\otimes_{T(Y_1)} T(X_{12})\bigr) \otimes_{T(Y_2)} T(X_{23}) \ar[rr, "\cong"]&  & T(X_{01})\otimes_{T(Y_1)} \bigl(T(X_{12}) \otimes_{T(Y_2)} T(X_{23})\bigr)
  \end{tikzcd}
\end{equation*}
commutes, where the bottom arrow is the associator of the tensor product.
\end{theorem}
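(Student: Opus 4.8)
The plan is to reduce Thm~\ref{ThmCoherence} to its purely algebraic counterpart, Thm~\ref{ThmDiagramCommutes}, applied to the spaces $W_i \coloneqq W_{Y_i}$ and the Lagrangians $L_{ij} \coloneqq L_{X_{ij}} \subset W_{Y_i} \oplus - W_{Y_j}$. First I would check the hypotheses of Thm~\ref{ThmDiagramCommutes}. Each sub-bordism $X_{ij}$ inherits product structure near those of $Y_0,Y_1,Y_2,Y_3$ forming its boundary and near the hypersurfaces it contains, so Lemma~\ref{LemmaCompositionLagrangian} applies to every decomposition $X_{ik} = X_{ij} \cup_{Y_j} X_{jk}$ and shows that all iterated compositions $L_{X_{ij}} \circ L_{X_{jk}}$ are Lagrangians, equal to $L_{X_{ik}}$; Lemma~\ref{LemmaAssumptions} gives conditions (1)--(2) of Thm~\ref{ThmGluing} for each such composition; and Lemma~\ref{LemmaKernel} identifies the space $K_{ijk}$ of \eqref{DefinitionKandM} with the space of harmonic spinors on the closed components of $X_{ik}$ meeting $Y_j$, a closed manifold, so $K_{ijk}$ is finite-dimensional. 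Thus the maps $\alpha_{ijk}$ of Thm~\ref{ThmGluing} and $\rho$ of \eqref{DefinitionIsoPhi} are defined and the big diagram of Thm~\ref{ThmDiagramCommutes} commutes.

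Next I would unwind each $\tau_{ijk}$ occurring in the diagram. Using Lemma~\ref{LemmaKernel} one has the orthogonal splitting $\mathcal H_{X_{ik}^{\cl}} = \mathcal H_{X_{ij}^{\cl}} \oplus \mathcal H_{X_{jk}^{\cl}} \oplus \mathcal H^{\cl}_{ijk}$, where $\mathcal H^{\cl}_{ijk}$ denotes the harmonic spinors on the components of $X_{ik}^{\cl}$ meeting $Y_j$, and $\Res_{Y_j}$ carries $\mathcal H^{\cl}_{ijk}$ isomorphically onto $K_{ijk}$. Correspondingly $\tau_{ijk}$ is $\tau'_{ijk}$ (the analogue of the map $\tau^\prime$ in \eqref{DefTauPrime}) tensored with the canonical reorganization of the determinant lines of $\mathcal H_{X_{ij}^{\cl}}$ and $\mathcal H_{X_{jk}^{\cl}}$; the latter uses only symmetry/associativity isomorphisms of the super tensor product and is automatically coherent, so it suffices to treat the $\tau'$-part. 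By \eqref{DefTauPrime}, under the identification $\mathcal H^{\cl}_{ijk} \cong K_{ijk}$ the map $\tau'_{ijk}$ equals $\alpha_{ijk}$ precomposed with $\Lambda^{\mathrm{top}}\Res_{Y_j}$ and rescaled by the scalar $\det(\Res_{Y_j}^*\Res_{Y_j}|_{\mathcal H^{\cl}_{ijk}})^{-1}$.

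Feeding this into the two legs of the $\tau$-diagram and evaluating on the generator $\Omega_X \otimes \Theta$, with $\Theta$ a generator of $\Lambda^{\mathrm{top}}\mathcal H_{X^{\cl}}$, each leg becomes a rearrangement of $\Lambda^{\mathrm{top}}\mathcal H_{X^{\cl}}$ (from the two ways of grouping the connected components of $X^{\cl}$), followed by the corresponding $\alpha$-composite of Thm~\ref{ThmDiagramCommutes}, followed by multiplication by the product of the two determinant scalars. The key geometric point, which I expect to be the main work, is the identification of the rearrangement of the ``middle'' line $\Lambda^{\mathrm{top}}$ of the components of $X^{\cl}$ meeting $Y_1 \cup Y_2$: on the $\tau_{013}$/$\tau_{123}$-leg it is presented as $\Lambda^{\mathrm{top}}K_{013} \otimes \Lambda^{\mathrm{top}}K_{123}$ via $\Res_{Y_1} \oplus \Res_{Y_2}$, and on the $\tau_{023}$/$\tau_{012}$-leg as $\Lambda^{\mathrm{top}}K_{012} \otimes \Lambda^{\mathrm{top}}K_{023}$ via $\Res_{Y_1}\oplus\Res_{Y_2}$; since $\Res_{Y_j}$ sends harmonic spinors on distinct components to orthogonal parts of $W_{Y_j}$, one checks that the change-of-presentation map is exactly $\Lambda^{\mathrm{top}}\rho$ — it is the identity on the components touching only $Y_1$ (resp.\ only $Y_2$), and on the components touching both it is $\Lambda^{\mathrm{top}}(\Res_{Y_2}\Res_{Y_1}^{-1})$, which by Lemma~\ref{LemmaIsoOfComplement} is $\rho_0$. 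Thm~\ref{ThmDiagramCommutes} then says the two $\alpha$-composites differ precisely by the scalar $\det(\rho^*\rho)$ once $\Lambda^{\mathrm{top}}\rho$ is inserted. It remains to see that the product of determinant scalars along the two legs differs by $\det(\rho^*\rho)^{-1}$, so that the two discrepancies cancel. Using the orthogonal splittings $K_{013} = K_{012} \oplus (K_{012}^\perp \cap K_{013})$, $K_{023} = K_{123} \oplus (K_{123}^\perp \cap K_{023})$ — where the ``only $Y_1$'' block $K_{012}$ and ``only $Y_2$'' block $K_{123}$ contribute the scalars already appearing in $\tau'_{012}$ and $\tau'_{123}$ — this reduces to the identity $\det(\rho_0^*\rho_0) = \det(\Res_{Y_2}^*\Res_{Y_2}) / \det(\Res_{Y_1}^*\Res_{Y_1})$ on the harmonic spinors of the components meeting both $Y_1$ and $Y_2$, which is immediate from $\rho_0 = \Res_{Y_2}\Res_{Y_1}^{-1}$ and multiplicativity of determinants.

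Beyond this geometric identification of $\rho$, the main obstacle I anticipate is organizational: matching the orderings of tensor factors, the super signs, and the associators in the (relatively compact) diagram of Thm~\ref{ThmCoherence} with those in the much larger diagram of Thm~\ref{ThmDiagramCommutes}, since several $\Lambda^{\mathrm{top}}$-factors must be transported across the $\Cl(W_j)$-balanced tensor products. Once that dictionary is pinned down, the conclusion is a formal consequence of Thm~\ref{ThmDiagramCommutes} together with the one-line determinant computation above.
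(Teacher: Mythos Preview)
Your proposal is correct and follows essentially the same route as the paper: reduce to Thm.~\ref{ThmDiagramCommutes}, relate each $\tau_{ijk}$ to $\alpha_{ijk}$ via $\Lambda^{\mathrm{top}}\Res_{Y_j}$ and the determinant normalization, and then verify that the two presentations of the ``middle'' determinant line are intertwined by $\Lambda^{\mathrm{top}}\rho/\det(\rho^*\rho)$. The paper packages the last step as the single commutative triangle \eqref{LambdaTopDiagram} (obtained by applying $\Lambda^{\mathrm{top}}$ to the identity $\rho\circ(\Res_1\oplus\Res_2)=(\Res_1\oplus\Res_2)$ and tracking the scalar factors), whereas you split it into the geometric identification $\rho_0=\Res_{Y_2}\Res_{Y_1}^{-1}$ plus the determinant identity $\det(\rho_0^*\rho_0)=\det(\Res_{Y_2}^*\Res_{Y_2})/\det(\Res_{Y_1}^*\Res_{Y_1})$ on $\mathcal{H}_{X_r}$; these are two ways of saying the same thing. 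One small caveat: your phrase ``multiplicativity of determinants'' is not literally the justification, since $\Res_{Y_1}$ and $\Res_{Y_2}$ go between different spaces; the clean argument is $\|\Lambda^{\mathrm{top}}f(\omega)\|^2=\det(f^*f)\|\omega\|^2$ applied to $f=\Res_{Y_1},\Res_{Y_2},\rho_0$.
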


\begin{figure}[h]
\begin{center}\includegraphics[scale=0.28, draft = false]{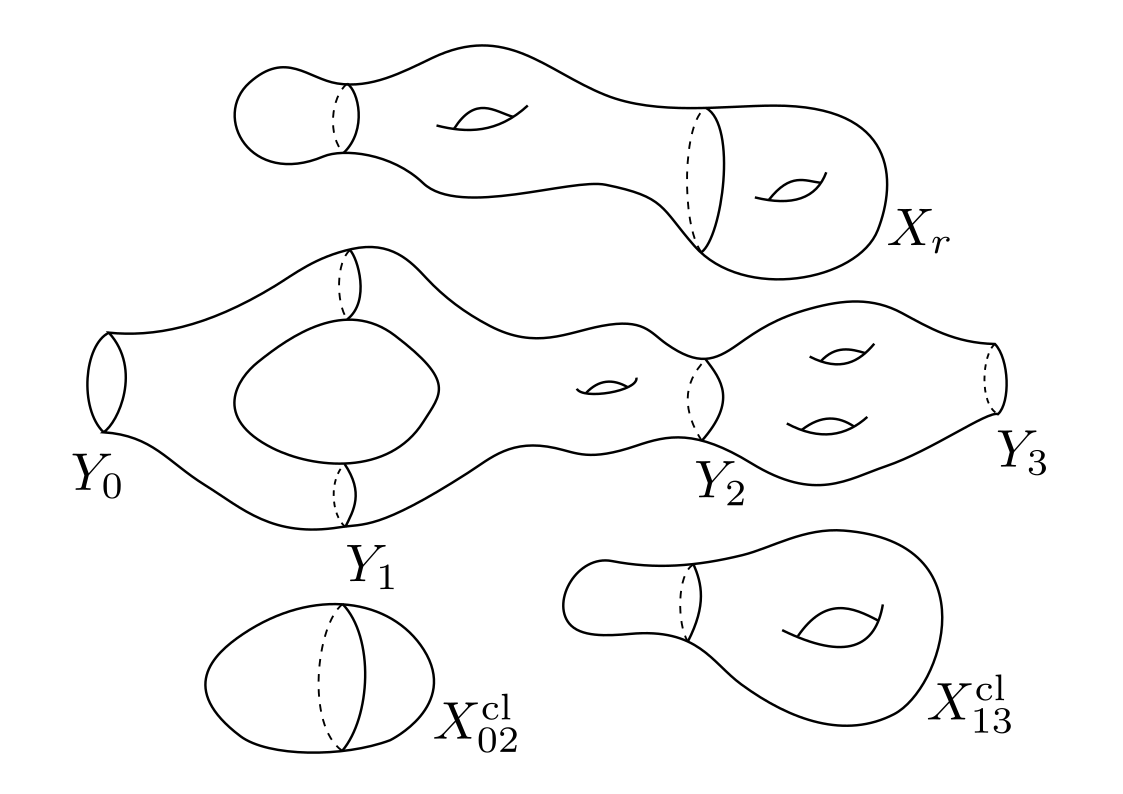}
\caption{Cutting into three pieces}\label{FigureCuttingIntoThreePieces}
\end{center}
\end{figure}

\begin{proof}
The bordism $X$ splits up into a bordism without closed components and a disjoint union of closed components.
This splitting corresponds to an exterior direct sum decomposition on the algebraic side, hence we may consider the separately the case where $X$ has no closed components and the case where $X$ is closed.

If there are no closed components, the statement follows directly from Thm.~\ref{ThmNumber} as the spaces $K_{ijk}$ are trivial in this case.

Suppose now that $X$ is closed.
The strategy of the proof is to attach commutative triangles and squares to the exterior of the commutative diagram from  to obtain a large commutative diagram containing the diagram from the theorem as outer circle.

We may assume that $X_{01}^{\cl} = X_{12}^{\cl} = X_{23}^{\cl} = \emptyset$, as the general case follows from this one after tensoring by suitable identity maps. We can then write
\begin{equation*}
  X^{\cl} = X^{\cl}_{02} \sqcup X_{13}^{\cl} \sqcup X_{{r}},
\end{equation*}
where (using our assumption) $X^{\cl}_{02}$ consists of all closed connected components of $X$ that only touch $Y_1$, $X^{\cl}_{13}$ consists of those that only touch $Y_2$ and $X_{{r}}$ consists of the rest, which are those closed connected components of $X$ that touch both $Y_1$ and $Y_2$, see Figure \ref{FigureCuttingIntoThreePieces}. Clearly, $\mathcal{H}_{X^{\cl}}$ can be split as an orthogonal direct sum in two ways,
\begin{equation*}
  \mathcal{H}_{X^{\cl}} = \mathcal{H}_{M_1^{\cl}} \oplus \mathcal{H}_{X_{13}^{\cl}} =  \mathcal{H}_{X_{02}^{\cl}} \oplus \mathcal{H}_{M_2^{\cl}}
\end{equation*}
where we wrote $M_1^{\cl} = X_{02}^{\cl} \sqcup X_r$, $M_2^{\cl} = X_{13}^{\cl} \sqcup X_r$.
Let us abbreviate $L_{ij} = L_{X_{ij}}$ for $0 \leq i < j \leq 3$ and define $K_{ijk}$ as in \eqref{DefinitionKandM}. Let $\Res_i: \mathcal{H}_X \rightarrow W_{Y_i}$ be the restriction maps, $i=1, 2$. We then have the following commutative diagram
\begin{equation*}
\begin{tikzcd}[column sep={3cm,between origins}]
\mathcal{H}_{M_1^{\cl}} \oplus \mathcal{H}_{X_{13}^{\cl}} \ar[d, "\Res_1 \oplus \Res_2"]
& 
\mathcal{H}_X^{\cl} \ar[l, equal] \ar[r, equal]
& \mathcal{H}_{M_2^{\cl}} \oplus \mathcal{H}_{X_{02}^{\cl}}  \ar[d, "\Res_1 \oplus \Res_2"]\\
K_{013} \oplus K_{123} \ar[rr, "\rho"'] & & K_{012}\oplus K_{023}, 
 \end{tikzcd}
\end{equation*}
where $\rho$ is the ``development map'' defined in \eqref{DefinitionIsoPhi}. This leads to the commutative diagram
\begin{equation} \label{LambdaTopDiagram}
\begin{tikzcd}[column sep={3cm,between origins}]
 & \Lambda^{\mathrm{top}}\mathcal{H}_X^{\cl} 
 	\ar[dl, "\frac{\Lambda^{\mathrm{top}} \Res_1}{\det(\Res_1^*\Res_1|_{M_1^{\cl}})} \otimes \frac{\Lambda^{\mathrm{top}} \Res_2}{\det(\Res_2^*\Res_2|_{X_{13}^{\cl}})}"', bend right=10] 
	\ar[dr, "\frac{\Lambda^{\mathrm{top}} \Res_1}{\det(\Res_1^*\Res_1|_{X_{02}^{\cl}})} \otimes \frac{\Lambda^{\mathrm{top}} \Res_2}{\det(\Res_2^*\Res_2|_{M_2^{\cl}})}", bend left=10]& \\
\Lambda^{\mathrm{top}}K_{013} \otimes \Lambda^{\mathrm{top}} K_{123} \ar[rr, "\frac{\Lambda^{\mathrm{top}}\rho}{\det(\rho^*\rho)}"'] & & \Lambda^{\mathrm{top}}K_{012}\otimes \Lambda^{\mathrm{top}} K_{023}, 
 \end{tikzcd}
\end{equation}
using multiplicativity of the determinant and functoriality of taking the top exterior power. Since the only difference between the isomorphisms $\tau$ defined above and the isomorphisms $\alpha$ used in \S\ref{SectionBimodules} is the determinant factor, also the diagram
\begin{equation*}
\begin{tikzcd}[column sep={7cm,between origins}]
T(X)
	\ar[r, "\mathrm{id} \otimes \frac{\Lambda^{\mathrm{top}} \Res_1}{\det(\Res_1^*\Res_1|_{M_1^{\cl}})} \otimes \frac{\Lambda^{\mathrm{top}} \Res_2}{\det(\Res_2^*\Res_2|_{X_{13}^{\cl}})}", bend left=10] 
	\ar[d, "\tau_{013} \otimes \mathrm{id}"]
	& 
	\bigl(\Lambda L_{03} \otimes \Lambda^{\mathrm{top}}K_{013}\bigr) \otimes \Lambda^{\mathrm{top}} K_{123} 
		\ar[d, "\alpha_{013} \otimes \mathrm{id}"] 
		\\
T(X_{02}) \otimes_{T(Y_2)} T(X_{23})
		\ar[r, "\mathrm{id} \otimes \mathrm{id} \otimes \frac{\Lambda^{\mathrm{top}} \Res_2}{\det(\Res_2^*\Res_2|_{X_{13}^{\cl}})}~~~", bend left=10] 
		\ar[dddr, "\mathrm{id} \otimes \tau_{123}"', bend right=30]
		&
			\bigl(\Lambda L_{01} \otimes_{\Cl(W_1)} \Lambda L_{13}\bigr) \otimes \Lambda^{\mathrm{top}} K_{123}
				\ar[d, "\cong"]
				\\
	        &
			\Lambda L_{01} \otimes_{\Cl(W_1)} \bigl(\Lambda L_{13} \otimes \Lambda^{\mathrm{top}} K_{123}\bigr)
				\ar[d, "\mathrm{id} \otimes \alpha_{123}"]
				\\
		&
			\Lambda L_{01} \otimes_{\Cl(W_1)} \bigl(\Lambda L_{12} \otimes_{\Cl(W_2)} \Lambda L_{23}\bigr)
				\ar[d, equal]
				\\
		&  T(X_{01})\otimes_{T(Y_1)} \bigl(T(X_{12}) \otimes_{T(Y_2)} T(X_{23})\bigr)
\end{tikzcd}
\end{equation*}
commutes. This connects the left half of the diagram from Thm.~\ref{ThmNumber} to the left half of the diagram of this theorem. A similar commutative diagram can be drawn for the respective right halves. Now tensoring the diagram \eqref{LambdaTopDiagram} with $\Lambda L_{03}$, the resulting diagram is used to join these two halves yielding a large commutative diagram that interpolates between the diagram from this theorem and the one from Thm.~\ref{ThmNumber}. This finishes the proof.
\end{proof}

\section{Outlook}\label{SectionOutlook}

In this expository section, we give an overview on questions that were not addressed in this paper and that remain open. In particular, in the first few paragraphs below, we sketch how to construct the free fermion itself, and how the anomaly theory $T$ can be extended above. Interestingly, it turns out that essentially all objects of interest related to the Dirac operator $-$ its eta-invariant, its determinant, its index and the $\hat{A}$-genus appear in this description. In this sense, the free fermion alone encompasses the whole index theory of the Dirac operator.

\subsection{The Free Fermion}

The purpose of this paper was not to construct the free fermion itself, but the corresponding {\em anomaly theory}, its {\em twist} in functorial field theory language. Having completed this task, the free fermion will then be a natural transformation $F: \mathbf{1} \rightarrow T$, 
\begin{equation*}
\begin{tikzcd}[column sep = {1.7cm,between origins},row sep={0.5cm,between origins}]
 & \,\ar[dd, Rightarrow, "F"] & \\
\mathrm{Bord}_{\langle d-1, d\rangle}^{\Cl}~~ \ar[rr, bend left=30, "\mathbf{1}"] \ar[rr, bend right=30, "T"'] & & \mathrm{sAlg}, \\
 & \, &
\end{tikzcd}
\end{equation*}
where $\mathbf{1}$ is the trivial field theory, which assigns $\bbC$, respectively identities.
In this section, we give a quick overview how to define it. 

To begin with, let us unravel what such a natural transformation consists of. First, for every object of $\mathrm{Bord}_{\langle d-1, d\rangle}^{\Cl}$, in other words, a $(d-1)$-dimensional closed Clifford manifold, $F$ assigns a morphism $F(Y): T(Y) \rightarrow \mathbf{1}(Y) = \bbC$ in the category $\mathrm{sAlg}$, in other words, $F(Y)$ will be a $\Cl(W_Y)$-module. Now if $Y_0$, $Y_1$ are two such manifolds and $X$ is a bordism between the two, then the naturality of the transformation in this higher categorical context is the additional data of a homomorphism
\begin{equation*}
  F(X): T(X) \otimes_{T(Y_1)} F(Y_1)\longrightarrow F(Y_0)   
\end{equation*}
satisfying certain coherence conditions related to the composition of bordisms.

\medskip

To define the $\Cl(W_Y)$-module $F(Y)$ for a $(d-1)$-dimensional closed Clifford manifold $Y$, one takes it of the form $F(Y) = \Lambda L$ for some Lagrangian $L$ in $W_Y$. Here $L$ should be taken from the space\footnote{Here one is flexible to modify the notion of closeness for subspaces $L$ of $W_Y$. One should at least assume that the difference $P_L - P_{L_Y}$ is a Hilbert-Schmidt operator; however, in this case, it may even be suitable to require $P_L - P_{L^\prime}$ being a smoothing operator.}
\begin{equation} \label{SpaceOfLagrangians}
  \mathfrak{Lag}_Y = \bigl\{ L \subset W_Y ~\bigl|~ L~ \text{Lagrangian close to}~L_Y\bigr\},
\end{equation}
where $L_Y$ is the sub-Lagrangian defined in \eqref{DefinitionLY}.
However, as $L_Y$ is not always a Lagrangian due to the possible presence of harmonic spinors, there is no canonical choice for such a Lagrangian $L$. 
A possible approach to define $F$ is now to just choose a Lagrangian $L$ for each such manifold $Y$ (using the axiom of choice), at the expense of $F$ depending on this choice. 

To provide a glimpse into a further understanding of this problem, we refer to Thm.~1.29 of \cite{Prat}, which states that if $L_1$ and $L_2$ are two such Lagrangians that are close in the Hilbert-Schmidt sense, the space of $\Cl(W_Y)$-linear homomorphisms from $(\Lambda L_1)^c$ to $(\Lambda L_2)^c$ is a complex line\footnote{This result is only true when passing to the completions of $\Lambda L_i$, hence it seems suitable to do this throughout at this point.}. This gives rise to a gerbe on the moduli space $\mathscr{M}_Y^{\Cl}$ of Clifford structures on $Y$, and the choice needed to define the desired functor $F$ on objects is precisely the choice of a trivialization of this gerbe. In particular, in order for  $F$ to have the property that the modules $F(Y)$ depend smoothly on the Clifford structure on $Y$, one needs this gerbe to admit a trivialization in the first place. We will not further elaborate on this problem here.

\medskip

Let now $X$ be a closed $d$-dimensional Riemannian manifold with $d$-dimensional Clifford module, seen as an automorphism of the empty manifold in  $\mathrm{Bord}_{\langle d-1, d\rangle}^{\Cl}$. In this case, $F(X)$ is a homomorphism
\begin{equation*}
  F(X): T(X) = \Lambda^{\mathrm{top}} \mathcal{H}^+(X) \otimes \overline{\Lambda^{\mathrm{top}}\mathcal{H}^-(X)} \longrightarrow \bbC  ,
\end{equation*}
c.f.\ Remark~\ref{RemIandComplexConjugate}. In other words, $F(X)$ is nothing but an element of the determinant line of $X$. There is a canonical such element: The determinant $\det_\zeta(D_X)$ of the Dirac operator of $X$, the definition of which is quite tautological in this description of the determinant line: It is one if $\mathcal{H}(X) = \{0\}$ and zero otherwise. It is then a non-trivial statement due to Quillen that the determinant lines glue together to a line bundle on the moduli space $\mathscr{M}_X^{\Cl}$ of Clifford structures on $X$ and that the determinant is a smooth section of this line bundle; for details, see e.g.\ \cite{BGV}.

It is now a very interesting (and possibly quite challenging) task to extend this definition to manifolds $X$ with boundary. Since each Lagrangian $L \in \mathfrak{Lag}_Y$ provides an elliptic boundary condition, giving rise to a self-adjoint Dirac operator $D_{X, L}$, and a regularized determinant $\det_\zeta(D_{X, L})$, we expect these determinants to be involved in the construction of the homomorphisms $F(X)$. Similar to the arguments in \cite{Kandel}, verifying functoriality will then use gluing formulas for the determinant of the Dirac operator (see e.g.~\cite{HuangLee}). We believe that these (somewhat complicated) gluing formulas will take a very natural form in this setting.

\subsection{Von Neumann Algebras and Connes Fusion}

In this paper, we constructed the anomaly theory with values in the category $\mathrm{sAlg}$ of ($\bbZ_2$-graded) algebras, bimodules on intertwiners. In other words, we worked in the purely algebraic setting, where no topology or norm was put on algebras or modules. It is another challenging task to enhance this construction to a functor
\begin{equation*}
  T : \mathrm{Bord}_{\langle d-1, d\rangle}^{\Cl} \longrightarrow \mathrm{svN},
\end{equation*}
where $\mathrm{svN}$ denotes the bicategory of $\bbZ_2$-graded von Neumann algebras, Hilbert space bimodules and intertwiners described in \S4 of \cite{StolzTeichnerElliptic}. The composition of bimodules in this bicategory is given by the fusion product of Connes for Hilbert space bimodules.

\medskip

To begin with, let $X$ be a bordism from $Y_1$ to $Y_0$ and let 
\begin{equation*}
\mathcal{F}_X \coloneqq (\Lambda L_X)^c
\end{equation*}
 be the Hilbert space completion of the $\Cl(W_{Y_0})$-$\Cl(W_{Y_1})$-bimodule $\Lambda L_X$. The Clifford algebras $\Cl(W_i)$ act on $\Lambda L_X$ via bounded operators, hence we obtain $\Cl(W_{Y_0}) \subseteq B(\mathcal{F}_X)$ and $\Cl(W_{Y_1})^{\mathrm{op}} \subset \Cl(W_{Y_0})^\prime \subseteq B(\mathcal{F}_X)$, the commutant. Taking the bicommutant of these subalgebras (or, equivalently, the weak closure) in $B(\mathcal{F}_X)$ then give rise to von Neumann algebras $\Cl(W_0)^{\prime\prime}$ and $\Cl(W_1)^{\prime\prime}$, turning $\mathcal{F}_X$ into a Hilbert space $\Cl(W_0)^{\prime\prime}$-$\Cl(W_1)^{\prime\prime}$-bimodule.
It turns out that these von Neumann algebras are always factors of type $I_\infty$ if $X$ is connected\footnote{There was a false claim in earlier versions of this paper (including the published version) that also other factors can appear.}.
If $X$ is not connected, $\Cl(W_0)^{\prime\prime}$ is a tensor product of the factors corresponding to the individual components.

\medskip

%The first obstacle one encounters when trying to define the functor $T$ is that one has to assign a von Neumann algebra to each $(d-1)$-dimensional Clifford manifold, without any reference to a bordism. Here the only reasonable candidate seems to be the {\em universal enveloping von Neumann algebra} of $\Cl(W_Y)$. This von Neumann algebra can be defined as the double dual  $\Cl(W_Y)^{**}$ of $\Cl(W_Y)$ and has the property that any von Neumann completion $A \subset B(H)$ of a $*$-representation of $\pi:\Cl(W) \rightarrow B(H)$ admits a unique $*$-homomorphism $\rho: \Cl(W_Y)^{**}\rightarrow A$ that is weakly continuous and restricts to $\pi$ on $\Cl(W_Y)$ (here $\Cl(W_Y) \subset \Cl(W_Y)^{**}$ via the canonical embedding into the double dual). In particular, it follows that the Fock module $\mathcal{F}_X$ associated to a bordism $X$ from $Y_1$ to $Y_0$ becomes a $\Cl(W_{Y_0})^{**}$-$\Cl(W_{Y_1})^{**}$-module in a natural way.

To establish functoriality for bordisms $X_0$, $X_1$ with a common boundary $Z$, one needs to find a canonical isomorphism
\begin{equation*}
\tau: \mathcal{F}_X \longrightarrow \mathcal{F}_{X_0} \boxtimes_{\Cl(W_Z)^{\prime\prime}}\mathcal{F}_{X_1}
\end{equation*}
 of Hilbert $\Cl(W_{Y_0})^{\prime\prime}$-$\Cl(W_{Y_1})^{\prime\prime}$-bimodules, where $X = X_0 \sqcup_Z X_1$ and $\boxtimes$ denotes the Connes fusion product. 
 The problem here is that while naturally $\Lambda L_X \subset \mathcal{F}_X$, the Connes fusion product $\mathcal{F}_{X_0} \boxtimes_{\Cl(W_Z)^{\prime\prime}}\mathcal{F}_{X_1}$  does {\em not} canonically contain the algebraic tensor product of $\Lambda L_{X_0} \otimes_{\Cl(W_Z)} \Lambda L_{X_1}$, which makes it hard to put the Gluing Thm.~\ref{ThmGluing} to use in this context. The solution to this problem is a question of ongoing research.

\subsection{Higher Bordism Categories}

Lurking behind various corners of this paper is the concept of {\em extended functorial field theory}. At its heart lies the observation that manifolds cannot only be cut in one direction to obtain bordisms with closed boundaries, but in several directions, giving rise to bordisms of manifolds with boundaries and corners. Consequently, at least heuristically, one can form the bordism category
\begin{equation*}
  \mathrm{Bord}_{\langle d-k, \dots, d\rangle}^{\Cl},
\end{equation*}
which is in fact a higher category, a $k$-category: objects are $(d-k)$-dimensional Clifford manifolds with corners of codimension up to $k$; morphisms are $(d-k+1)$-dimensional Clifford bordisms between such manifolds; 2-morphisms are $(d-k+2)$-dimensional bordisms between bordisms and so forth. Just as the bordism 1-category (Remark~\ref{RemarkCliffordBordism}), it is not an easy matter making this concept rigorous, c.f.\ e.g.\ \cite{StolzTeichnerElliptic, StolzTeichnerField, AyalaGeometriCob}. 

While the formal structure of these bordism categories may be rather aloof, the analytic situation is rather transparent: One encounters questions about geometric invariants on manifolds with corners, together with their gluing properties. The concept of higher bordism categories can then be used as a guiding principle to find the correct questions. It is this spirit that we will display in the next sections.

\subsection{Even vs.\ odd Clifford modules}

So far, we only discussed {\em even} Clifford manifolds. An {\em odd} Clifford manifold is essentially the same, except that the Clifford module is not required to be graded. This gives rise to {\em odd Clifford bordism categories} $\mathrm{Bord}_{\langle d-1, d, d+1\rangle}^{\Cl, \mathrm{odd}}$.

The relation to the even version is as follows. Remember that a $d$-dimensional Riemannian manifold $X$ with a $d$-dimensional even Clifford module induces a $d$-dimensional even Clifford module $\Sigma_{\partial X}$ onto its boundary $\partial X$ by restriction. It also induces a $(d-1)$-dimensional odd Clifford module on $\partial X$, by setting 
\begin{equation*}
{\Sigma}^{\mathrm{odd}}_{\partial X} \coloneqq \Sigma_X^+, \qquad {\gamma}^{\mathrm{odd}}_{\partial X}(w) = \gamma_X(\nu)\gamma_X(w),
\end{equation*}
where $\nu$ is the outward pointing normal vector. One could also define ${\Sigma}^{\mathrm{odd}}_{\partial X} \coloneqq \Sigma_X^-$ instead; multiplication by $\gamma(\nu)$ gives an isomorphism between the two definitions. Conversely, for a $(d-1)$-dimensional odd Clifford module $({\Sigma}_Y^{\mathrm{odd}}, {\gamma}^{\mathrm{odd}})$ on a $(d-1)$-dimensional manifold $Y$, setting
\begin{equation*}
  \Sigma_Y \coloneqq {\Sigma_Y}^{\mathrm{odd}} \oplus {{\Sigma}}^{\mathrm{odd}}_Y, \qquad 
  \gamma(w) = \begin{pmatrix} \gamma^{\mathrm{odd}}(w) & 0 \\ 0 & {\gamma}^{\mathrm{odd}}(w) \end{pmatrix}, \quad
  \gamma(\mathbf{1}) = \begin{pmatrix} 0 & -\Id \\ \Id& 0 \end{pmatrix},
\end{equation*}
for $w \in TY$ provides a $d$-dimensional even Clifford modules $(\Sigma_Y, \gamma)$ on $Y$.

Similarly, if $X$ is a $(d+1)$-dimensional manifold with $(d+1)$-dimensional odd Clifford module, it induces not only the structure of a $(d+1)$-dimensional odd Clifford module on its boundary $\partial X$, but also that of a $d$-dimensional even Clifford module, by just defining the grading operator by $J\coloneqq \gamma(\nu)$, Clifford multiplication with the exterior normal vector. Conversely, a graded $\Cl_d$-module can be turned into an ungraded $\Cl_{d+1}$-module, where the Clifford multiplication by the extra basis vector is given via the grading operator $J$.

In general, on $(d-k)$-dimensional Clifford manifolds, there is a similar correspondence between $d$-dimensional even Clifford modules and $(d-k)$-dimensional Clifford modules of parity $k ~ (\mathrm{mod} ~2)$.

\begin{remark}
It is not hard to phrase the results of this paper in these terms, always considering $(d-1)$-dimensional odd Clifford modules on $(d-1)$-dimensional manifolds instead of $d$-dimensional even ones. For example, with a view on Remark~\ref{CARalgebra}, the CAR-algebra over the complex vector space $C^\infty(Y, \Sigma^{\mathrm{odd}}_Y)$ is canonically isomorphic to the Clifford algebra over the complex vector space $C^\infty(Y, \Sigma_Y)_J$ with real structure given by $\gamma(\mathbf{1})$. However, since a large part of our discussion depended on restricting spinors to the boundary, the current setup seemed more natural.
\end{remark}

\subsection{Extension above: The Dai-Freed theory} \label{SectionFreedDaiTheory}

It is generally believed that the anomaly theory $T$ extends to a functor
\begin{equation*}
 T: \mathrm{Bord}_{\langle d-1, d, d+1\rangle}^{\Cl, \mathrm{odd}} \longrightarrow \mathrm{sAlg}.
\end{equation*}
The top part of this extended theory, which is a functor
\begin{equation} \label{FreedDaiTheory}
  T: \mathrm{Bord}^{\Cl, \mathrm{odd}}_{\langle d, d+1\rangle} \longrightarrow \mathrm{sVect}
\end{equation}
is well-known; it was described by Dai and Freed over 20 years ago \citep{FreedDai}. We already know what this theory assigns to objects, which are closed $d$-dimensional manifolds $X$ together with a $(d+1)$-dimensional ungraded Clifford module $\Sigma_X$ or, what is the same thing, a $d$-dimensional graded Clifford module: It is the conjugate determinant line
\begin{equation*}
  T(X) = \Lambda^{\mathrm{top}} \mathcal{H}^+(X) \otimes \overline{\Lambda^{\mathrm{top}} \mathcal{H}^-(X)} = \overline{\mathrm{Det}}_X.
\end{equation*}
The main observation of Dai and Freed then was that if $M$ is a $(d+1)$-dimensional (ungraded) Clifford manifold with boundary $\partial M = X$, its {\em exponentiated $\eta$-invariant} 
\begin{equation} \label{ExpEtaInvariant}
\tau_M = e^{2\pi i \xi_M}, \qquad \text{where} \qquad \xi_M = \frac{\eta_M + \dim \ker D_M}{2},
\end{equation}
can be canonically understood as an element of $\overline{\mathrm{Det}}_X$; here $\eta_M$ denotes the $\eta$-invariant of Atiyah, Patodi and Singer \cite{APS}. 

Namely, they observe that $\overline{\mathrm{Det}}_X$ can be naturally identified with the space of functions on the space $\mathfrak{Lag}_X$ of Lagrangians in $C^\infty(X, \Sigma_X)$ that satisfy a certain transformation rule. More precisely, while there is no real structure in this context, a Lagrangian is just defined to be a subspace $L \subset W_X$ (where $W_X \coloneqq C^\infty(X, \Sigma_X)$) such that $L^\perp = JL$. Such a Lagrangian is always the graph of a unitary transformation $U: W_X^+ \rightarrow W_X^-$, and conversely,  the graph of any such transformation is a Lagrangian. The transformation rule  for functions $f: \mathfrak{Lag}_X \rightarrow \bbC$ alluded to before is now
\begin{equation} \label{TrafoRule}
f(L_2) = \det(U_1^{-1}U_2) f(L_1),
\end{equation}
 where $L_i = \mathrm{Graph}(U_i)$. Notice that assuming both $L_1$ and $L_2$ to be Hilbert-Schmidt-close to the Lagrangian $L_X$ considered in \eqref{DefinitionLY}, we obtain that the difference $U_1^{-1}U_2 - \mathrm{id}$ is trace-class, hence the Fredholm determinant $\det(U_1^{-1}U_2)$ is well-defined. Since a function satisfying \eqref{TrafoRule} is determined by its value at a single $L \in \mathfrak{Lag}_X$, the set of such functions forms a complex line, which is canonically isomorphic to $\overline{\mathrm{Det}}_X$ by Prop.~2.15 of \cite{FreedDai}.

Now on the other hand, any $L \in \mathfrak{Lag}_X$ is a self-adjoint elliptic boundary condition for $D_M$, giving rise to an eta-invariant $\eta_{M}(L)$ and an integer $\dim \ker D_{M, L}$. Dai and Freed then verify that indeed, the function $\tau_M: \mathfrak{Lag}_X \rightarrow \bbC$ defined using \eqref{ExpEtaInvariant} satisfies the transformation rule \eqref{TrafoRule}  and consequently defines an element of the determinant line (c.f.\ \cite[Thm.~1.4]{FreedDai}).

In particular, if $M$ is a bordism from $X_1$ to $X_0$, one naturally has 
\begin{equation*}
\overline{\mathrm{Det}}_{\partial M} = \overline{\mathrm{Det}}_{X_0}\otimes {\mathrm{Det}}_{X_1} = \mathrm{Hom}\bigl(\overline{\mathrm{Det}}_{X_1}, \overline{\mathrm{Det}}_{X_0}\bigr),
\end{equation*}
so $T(M) \coloneqq \tau_M$ can be naturally seen as a homomorphism $T(M): T(X_1) \rightarrow T(X_0)$. The {\em gluing formula}, \cite[Prop.~4.5]{FreedDai}, then states that this assignment behaves functorial with respect to gluing of bordisms. 

It is an open question how to ``glue together'' the Dai-Freed-functor with the one constructed in this paper; this would require the theory of eta-invariants for manifolds with corners of codimension two.

\subsection{A second extension: Index theory} \label{SectionSecondExtension}

Conjecturally, one can extend the Dai-Freed theory \eqref{FreedDaiTheory} even further, to a functor defined on the bordism bicategory $\mathrm{Bord}_{\langle d, d+1, d+2\rangle}^{\Cl}$. To describe the target of this extended theory, notice that the theory described in \S\ref{SectionFreedDaiTheory} in fact takes values in the subcategory $\mathrm{sLine}^\times \subset \mathrm{sVect}$, the category of graded lines, together with invertible homomorphisms. The extended theory will then be a functor
\begin{equation*}
  T: \mathrm{Bord}_{\langle d, d+1, d+2\rangle}^{\Cl} \longrightarrow \mathrm{sLine}^\times_{\bbZ},
\end{equation*}
where $\mathrm{sLine}_{\bbZ}^\times$ is a bicategory, obtained from $\mathrm{sLine}$ by adding 2-morphisms, as follows. Let $L_1, L_2$ be two complex lines. Since $\mathrm{Hom}(L_1, L_2)^\times$ is a $\bbC^\times$-torsor (i.e.\ a set with a free and transitive action of $\bbC^\times$, given by multiplication), its fundamental group is {\em canonically} isomorphic to $\bbZ$, after choosing a generator of $\pi_1(\bbC^\times)$, once and for all. Consequently, the universal cover $(\mathrm{Hom}(L_1, L_2)^\times)^{\sim}$ has a canonical $\bbZ$-action. This means that an invertible homomorphism $z \in \mathrm{Hom}(L_1, L_2)^\times$ determines a $\bbZ$-torsor 
\begin{equation*}
S_z \coloneqq \pi^{-1}(z) \subset (\mathrm{Hom}(L_1, L_2)^\times)^{\sim}.
\end{equation*}
Given invertible homomorphisms $z, z^\prime \in \mathrm{Hom}(L_1, L_2)^\times$, we can therefore consider isomorphisms of $\bbZ$-torsors $\varphi: S_z \rightarrow S_{z^\prime}$; adding these as 2-morphisms gives the bicategory $\mathrm{sLine}_{\bbZ}$.

\medskip

The functor $T$ now assigns to a closed $d$-dimensional manifold $X$ the conjugate determinant line $T(X) = \overline{\Det}_X$, and to a $(d+1)$-dimensional bordism $M$ from $X_1$ to $X_0$ the $\bbZ$-torsor $T(M) = S_{\tau_M}$ determined by the element $\tau_M \in \Hom(\overline{\Det}_{X_1}, \overline{\Det}_{X_0})$, c.f.\ \S\ref{SectionFreedDaiTheory} (since homomorphisms $z \in \Hom(L_1, L_2)$ are in 1:1 correspondence with the corresponding $\bbZ$-torsors $S_z$, this really extends the theory described above). 

In particular, if $M$ is closed so that $X_0 = X_1 = \emptyset$, we have $\tau_M \in \bbC^\times$. Under the identification $(\bbC^\times)^{\sim} = \bbC$ where the projection to $\bbC^\times$ is just the exponential map, we can identify
\begin{equation*}
  T(M) = S_{\tau_M} = \frac{\eta_M + \dim \ker D_M}{2} + \bbZ,
\end{equation*}
compare \eqref{ExpEtaInvariant}. Notice that while $\eta_M + \dim \ker D_M$ is not continuous under a change of metric, the jump size is in $2\bbZ$, hence as a {\em set}, $S_{\tau_M}$ depends continuously on the geometric data.

To describe the functor $T$, let first $W$ be a $(d+2)$-dimensional Clifford bordism between {\em closed} Clifford manifolds $M_1$ and $M_0$, which we assume to have product structure near the boundary. In this case, the Atiyah-Patodi-Singer index theorem \cite{APS} states that
\begin{equation*}
  \mathrm{ind}(D_{W, \mathrm{APS}}) = \int_W \hat{A}(TW) + \frac{\dim\ker(A_{M_0}) + \dim\ker(A_{M_1})}{2} + \frac{\eta_{M_0} - \eta_{M_1}}{2},
\end{equation*}
where $D_{W, \mathrm{APS}}$ denotes the Dirac operator on $W$ with Atiyah-Patodi-Singer boundary conditions, and $A_{M_i}$ is the induced boundary Dirac operator, as in \eqref{BoundaryDirac}. In particular, this implies that if we set
\begin{equation*}
  T(W) := \int_W \hat{A}(TW),
\end{equation*}
we have the equality (of sets) $T(M_0) + T(W) = T(M_1)$, in other words, we get a map of $\bbZ$-torsors $T(W): T(M_1) \rightarrow T(M_0)$. In particular, if $W$ has no boundary, then $T(W) = \mathrm{ind}(D_W) \in \bbZ$ is an automorphism of the trivial $\bbZ$-torsor $\bbZ \subset \bbC$. Functoriality in this case is trivial, since $T$ is locally determined.

Now if $M_0$, $M_1$ themselves have a boundary, so that $W$ has corners of codimension two, one should be able to use Thm.~1.9 of \cite{FreedDai} on the variation of the determinant-line-valued eta invariant to generalize this result to this setting. Compare also the paper \cite{HassellMazzeoMelrose}, where an index formula for manifolds with corners of codimension two for the signature operator is proved.

%%%%%%%%%%%%%%%%%%%%%%%%%%%%%%%%%%%%%%%%%%%%%%%%%%%%%%%
\nocite{*}
\bibliography{literature.bib}
\bibliographystyle{abbrv}

\end{document}